\newtheorem{theorem}{Theorem}[section]
\newtheorem{lemma}[theorem]{Lemma}
\newtheorem{proposition}[theorem]{Proposition}
\theoremstyle{definition}
\theoremstyle{remark}
\newtheorem{remark}[theorem]{Remark}
\numberwithin{equation}{section}
\newcommand{\alg}{\textnormal{\bf{alg}}\,}
\newcommand{\vect}{\textnormal{\bf{vect}}\,}
\newcommand{\am}{^{(+)}}\newcommand{\wt}{\widetilde}
\newcommand{\mro}{\textnormal{R}} 
\newcommand{\reg}{\mathcal{R}\textnormal{eg}\,}\newcommand{\sld}{sl_2}
\newcommand{\corf}{\mathbb{F}}\newcommand{\corfd}{\corf\cdot}
\newcommand{\jor}{\mathfrak{J}}\newcommand{\rad}{\mathcal{N}}\newcommand{\radd}{\mathcal{N}\,^2}
\newcommand{\bim}{\mathcal{M}}\newcommand{\obim}{\mathcal{M}^{\textnormal{op}}}\newcommand{\biml}{\mathcal{L}}
\newcommand{\clasekmj}{\mathcal{K}(\mathfrak{M},\jor)} \newcommand{\mj}{\mathfrak{M}(\jor)}
\newcommand{\heraa}{\mathcal{H}(\mathcal{A},\ast)}
\newcommand{\gra}{\Gamma}
\newcommand{\alga}{\mathcal{A}}\newcommand{\algc}{\mathcal{C}}\newcommand{\algs}{\mathcal{S}}
\newcommand{\algb}{\mathcal{B}}
\newcommand{\alMnm}{\mathcal{M}_{n|m}(\corf)}
\newcommand{\alMnn}{\mathcal{Q}_{n}(\corf)}
\newcommand{\osp}{\textnormal{Osp}}
\newcommand{\kap}{\mathcal{K}_{3}} \newcommand{\kapun}{\mathcal{K}_{3}\oplus\corfd 1}
\newcommand{\kac}{\mathcal{K}_{10}}
\newcommand{\algdt}{\mathcal{D}_{t}}\newcommand{\algdc}{\mathcal{D}_{0}}
\newcommand{\alQn}{\mathcal{Q}_{n}(\corf)}
\newcommand{\suMnm}{\mathcal{M}_{n|m}(\corf)\am}\newcommand{\suMuu}{\mathcal{M}_{1|1}(\corf)\am}
\newcommand{\suMnn}{\mathcal{Q}_{n}(\corf)\am}
\newcommand{\suMn}{\alQn\am}
\newcommand{\josp}{\jor\textnormal{osp}}
\newcommand{\bav}{\bar{v}}
\newcommand{\wv}{\widetilde{v}}\newcommand{\wu}{\widetilde{u}}
\newcommand{\ww}{\widetilde{w}}\newcommand{\we}{\widetilde{e}_1}\newcommand{\wf}{\widetilde{e}_2}
\newcommand{\wx}{\widetilde{x}}\newcommand{\wy}{\widetilde{y}}
\newcommand{\Jpn}{\textnormal{JP}_n(\corf)}
\begin{document}
\title{Wedderburn principal theorem for Jordan superalgebras I.}
\author{G\'omez-Gonz\'alez F.A.}
\address{Instituto de Matematicas, Universidad de Antioquia, Colombia,} 
\thanks{The author is thankful to Prof. Ivan Shestakov for his suggestion to solve the problem considered in this paper as a part of the author's Doctoral Thesis and for other valuable advises.}
\thanks{The author was partially supported by CAPES/CNPq IEL Nacional, Brazil, and Universidad de Antioquia,  Colombia}
\begin{abstract}

We consider finite dimensional Jordan superalgebras $\jor$ over an algebraically closed field of characteristic 0, with solvable radical
$\rad$
such that
$\radd=0$
and
$\jor/\rad$
is a simple Jordan superalgebra of one of the following types:
Kac $\kac$,
Kaplansky
$\mathcal{K}_3$
superform or 
$\algdt$.

We prove that an analogue of the
\emph{Wedderburn Principal Theorem (WPT)}
holds if certain restrictions on the types of irreducible subsuperbimodules of
$\rad$
are imposed,
where
$\rad$
is considered as a
$\jor/\rad$-superbimodule.
Using counterexamples, it is shown that the imposed restrictions are essential.
\end{abstract}

\maketitle
\textbf{Key Words:} Jordan superalgebras, Wedderburn, Decomposition, Semisimple.

\textbf{2010 Mathematics subject Classification: } 17A15, 17A70, 17C50.

\section{Introduction}
In 1892, T.~Mollien~\cite{Mol} proved that for any finite-dimensional associative algebra
$\alga$
with nilpotent radical
$\rad$
over the complex field there exists a subalgebra
$\algs\subseteq \alga$
such that
$\algs\cong\alga/\rad$
and
$\alga=\algs\oplus\rad$.
This result was generalized in 1905 by J.~H.~Maclagan-Wedderburn~\cite{Wed1}
for all finite dimensional associative algebras over an arbitrary field.
This result is known as the Wedderburn's Principal Theorem (WPT).
Analogues of the WPT were proved for finite-dimensional alternative algebras by R.~D.~Schafer
\cite{Sch1},
and for finite-dimensional Jordan algebras by A. Albert, Penico, Askinuze, and Taft~\cite{Alb,Ask,Taf,Pen}. Thus it is natural to try to extend this result to superalgebras.

In the case of finite dimensional alternative superalgebras
$\alga$
over a  field of characteristic zero, Pisarenko~\cite{Pis} proved an analogue to the WPT.
He proved that the theorem holds if some restrictions are imposed over summands in the semisimple superalgebra
$\alga/\rad$.
It was also shown with counter-examples that the restrictions are essential.

In the current paper,
we consider finite dimensional Jordan superalgebras
$\alga$
over a field of characteristic zero with radical
$\rad$
such that
$\radd=0$
and
$\alga/\rad$
is a simple Jordan superalgebra of one of the following types:
Kac $\kac$,
Kaplansky $\kap$, 
superform or 
$\algdt$.

The cases of simple quotients of the types
$\mathcal K_{10}$, superform, $D_t$, $\mathcal K_{3}$
are considered.
It's proved that a Wedderburn decomposition is possible with certain essential restrictions

\medskip

This paper is organized as follows. 
In Section 2, the basic examples of Jordan superalgebras are given.
Sections 3-6 contain the proof of the Main Theorem. 
In Section 3, the necessary reductions are done.
Sections 4-6 are devoted to prove of theorem.
Section 7, the main theorem is enounced.

\medspace
Note that the cases
$\suMnm$ and
$\josp_{n|2m}(\corf)$, 
are considered in~\cite{yo1} and \cite{yo2} respectively.
The other cases, when
$\jor/\rad$
is isomorphic to

$\Jpn, \suMnn$,
$\kap\oplus\kap\oplus\cdots\oplus\kapun$,
and Kantor superalgebra, are to be considered in the next paper.

\medskip
We also stress that the Main Theorem implies that the second cohomology group
$H^2(\jor,\rad)$
is not trivial for some simple Jordan superalgebra
$\jor$
and some irreducible
$\jor$-superbimodule
$\rad$.
This gives one more subject of interest to be considered in future papers.

\section{Jordan superalgebras, definition and some examples}

Throughout the paper, all algebras are considered over an algebraically closed field of characteristic zero $\corf$.

\medskip
Recall that an algebra
$\alga$
is said to be a \textit{superalgebra} if it is a direct sum
$\alga=\alga_0\dotplus\alga_1$
of vector spaces satisfying the relation
$\alga_{i}\alga_{j}\subseteq \alga_{i+j(\text{\rm mod }2)}$,
i.e.
$\alga$
is a
$\mathbb{Z}_2$-graded algebra.
For an element
$a\in\alga_i,\,i=0,1$,
the number
$|a|=i$
denotes a \textit{parity} of $a$.

\medskip

Let
$\Gamma=\alg\langle\,1,e_i,\,\, i\in\mathbb{Z}^{+} | e_ie_j+e_je_i=0\,\rangle$
be the Grassmann algebra.
Then
$\gra=\gra_0\dotplus\gra_1$,
where
$\gra_0$ and $\gra_1$
are the spans of all monomials of even and odd lengths, respectively.
It is not difficult to see that
$\gra$
has a superalgebra structure.

\medskip

For a superalgebra
$\alga=\alga_0\dotplus\alga_1$,
we define the \textit{Grassmann envelope of}
$\alga$
as follows:
$\gra(\alga)=\gra_0\otimes\alga_0\dotplus\gra_1\otimes\alga_1$.
Assuming that 
$\mathfrak{M}$
is a homogeneous variety of algebras.
The superalgebra
$\alga$
is said to be an
$\mathfrak{M}$-\textit{superalgebra} if the Grassmann envelope
$\gra(\alga)$
lies in
$\mathfrak{M}$.
Following this definition, one can consider associative, alternative, Lie, Jordan, etc. superalgebras.

\medskip
We recall that an algebra 
$\jor$ 
is said a Jordan algebra if its multiplication satisfies  the identity
$ab=ba$ of commutativity and the
Jordan identity 
$(a^2b)a=a^2(ba)$.
In this paper, we consider algebras over a field characteristic zero. 
Thus,  the Jordan identity is equivalent to its complete linearization 
$$((ac)b)d+((ad)b)c+((cd)b)a=(ac)(bd)+(ad)(bc)+(cd)(ba).$$

\medskip
An associative superalgebra is just a
$\mathbb{Z}_2$-graded associative algebra,
but it is not the case in general terms. It is easy to see that a Jordan superalgebra it is not a Jordan algebra.
One can verify that a superalgebra
$\jor=\jor_0\dotplus\jor_1$
is a Jordan superalgebra iff it satisfies the superidentities
\begin{eqnarray}
& a_ia_j=(-1)^{ij}a_ja_i, \label{idsupercom} \\
&\begin{gathered}
((a_ia_j)a_k)a_l+(-1)^{l(k+j)+kj}((a_ia_l)a_k)a_j+(-1)^{i(j+k+l)+kl}((a_ja_l)a_k)a_i=\\
=(a_ia_j)(a_ka_l)+(-1)^{l(k+j)}(a_ia_l)(a_ja_k)+(-1)^{jk}(a_ia_k)(a_ja_l) \label{idsupjordan}
\end{gathered}
\end{eqnarray}
for homogeneous elements
$a_t\in\jor_t,\,t\in\{i,j,k,l\}$.

We stress that, in view of the restriction on the characteristic of ground field,
superidentity~\eqref{idsupercom} yields that the Jordan superalgebra
$\jor=\jor_0\dotplus\jor_1$
is a
($\mathbb{Z}_2$-graded)
Jordan algebra iff
$(\jor_1)^2=0$.

Throughout the paper, we denote by $\dotplus$ a direct sum of vector space, by $+$ 
denote a sum of vector space and by $\oplus$ we denote a direct sum of superalgebras.

\medskip
\textbf{Some examples of Jordan Superalgebras.}

\medskip
Let
$\alga$
be an associative superalgebra with multiplication
$ab$.
We define on the vector space
$\alga$
a new multiplication
$a\circ b=\frac{1}{2}(ab+(-1)^{|a||b|}ba)$
for
$a,\,b\in\alga_0\cup\alga_1$.
It is not hard to verify that
$\alga$
gains a structure of Jordan superalgebra with respect to the defined multiplication.
We denote this superalgebra by
$\alga\am$.

\medskip
C.T.C~ Wall~\cite{Wal} proved that every associative simple finite-dimensional superalgebra over an algebraically closed field
$\corf$
is isomorphic to one of the following associative superalgebras:
\begin{itemize}
\item[{\rm (i)}] $\alga=\alMnm$,\quad $\alga_0=\Big\{\Big(\begin{array}{ll} a& 0\\ 0 & d \end{array}\Big)\Big\},\quad \alga_1=\Big\{\Big(\begin{array}{ll} 0& b\\ c & 0\end{array}\Big)\Big\},$
\item[{\rm (ii)}] $\alga=\alMnn=\mathcal{Q}(n)$,\quad $\alga_0=\Big\{\Big(\begin{array}{ll} a& 0\\ 0 & a \end{array}\Big)\Big\},\quad \alga_1=\Big\{\Big(\begin{array}{ll} 0& h\\ h & 0\end{array}\Big)\Big\}.$
\end{itemize}
where
$a, \, h\in\mathcal{M}_{n}(\corf)$, $d\in\mathcal{M}_{m}(\corf)$, $b\in\mathcal{M}_{n\times m}(\corf)$, $c\in\mathcal{M}_{m\times n}(\corf)$.

\medskip\noindent{\bf{(I)}}
Applying the multiplication
``$\circ$'' to the associative superalgebras
$\alMnn$ and $\alMnm$,
we get the Jordan superalgebras
$\suMnn$ and  $\suMnm$
respectively.

\medskip\noindent{\bf{(II)}}
Let
$\alga$
be an associative superalgebra.
A graded linear mapping
$\ast:\alga\longrightarrow \alga$
is called \textit{superinvolution} if
$(a^{\ast})^{\ast}=a$
and
$(ab)^{\ast}=(-1)^{|a||b|}b^{\ast}a^{\ast}$.
\noindent
By
$\heraa$
denote the set of symmetric elements of
$\alga$
relative to
$\ast$.
Then
$\heraa$
is a Jordan superalgebra such that
$\heraa\subseteq \alga\am$.

\noindent
Let
$I_n,\,I_m$
be the identity matrices of order $n$ and $m$ respectively,
$t$
be the transposition and
$$
U=-U^t=-U^{-1}=\Big(\begin{array}{ll}0& -I_m\\ I_m& 0\end{array}\Big).
$$

\noindent
Consider linear mappings
$$
\osp:\bim_{n|2m}(\corf)\longrightarrow\bim_{n|2m}(\corf) \textnormal{ and } \sigma:\alMnn\longrightarrow \alMnn
$$
given by
\begin{align}
\Big(\begin{array}{ll}a&b\\c&d\end{array}\Big)^{\osp}&=\Big(\begin{array}{ll}I_n&0\\0&U\end{array}\Big)\Big(\begin{array}{ll}a^t&-c^t\\b^t&d^t\end{array}\Big)\Big(\begin{array}{ll}I_n&0\\ 0&U^{-1}\end{array}\Big),\nonumber\\
\Big(\begin{array}{ll}a&b\\ c& d\end{array}\Big)^{\sigma}&=\Big(\begin{array}{ll}d^t&-b^t\\ c^t& a^t\end{array}\Big).
\end{align}
\noindent
It is easy to check that
$\osp$
and
$\sigma$
are superinvolutions and its  Jordan superalgebras are
$\mathcal{H}(\bim_{n|2m}(\corf),\osp)$
and
$\mathcal{H}(\alMnn,\sigma)$.
We denote these superalgebras by
$\josp_{n|2m}{(\corf)}$
and
$\Jpn$
respectively.

\medskip
One also may consider the following Jordan superalgebras.

\medskip\noindent{\bf{(III)}}
The  4-dimensional 1-parametric family
$\algdt=(\corfd e_1+\corfd e_2)\dotplus(\corfd x+\corfd y)$,
with nonzero products given by
$e_i^2=e_i$, $e_ix=xe_i=\frac{1}{2}x$,
$e_iy=ye_i=\frac{1}{2}y$,
$xy=-yx=e_1+te_2$.
The superalgebra
$\algdt$
is simple for
$t\neq 0$.

\medskip\noindent{\bf{(IV)}}
The non unital 3-dimensional Kaplansky superalgebra
$\mathcal{K}_3=\corfd e\dotplus(\corfd x+\corfd y)$,
with nonzero products
$ex=xe=\frac{1}{2}x$, $ey=ye=\frac{1}{2}y$, $xy=-yx=e$.
The superalgebra
$\kap$
is simple.

\medskip\noindent{\bf{(V)}}
Let $V=V_0\oplus V_1$ be a vector superspace.
We say that a bilinear mapping
$f:V\times V\longrightarrow \corf$
is a superform if
$f$
is symmetric over
$V_0$,
skew-symmetric over
$V_1$,
and satisfies
$f(V_0,V_1)=0$.
Consider a superalgebra
$\jor=(\corfd 1\oplus V_0)\dotplus V_1$
with the unit $1$
and the multiplication
$v\cdot w=f(v,w)\cdot 1$, ($v,w\in V$).
If
$f$
is a non-degenerate superform and
$\textnormal{dim }V_0> 1$,
then
$\jor$
is a simple Jordan superalgebra.

\medskip\noindent{\bf{(VI)}}
The introduced by Kac 10-dimensional superalgebra
$\mathcal{K}_{10}$
is a simple Jordan superalgebra.
A detailed description of
$\mathcal{K}_{10}$
is given in Section \ref{seckac}.

\medskip\noindent{\bf{(VII)}}
I. Kantor \cite{Kan} defined a simple Jordan superalgebra structure in the finite-dimensional Grassmann algebra
generated by
$e_1,\ldots,e_n$. 

\medskip
V. Kac \cite{Kac},
proved that every simple finite-dimensional Jordan superalgebra over
$\corf$
is isomorphic to one of the superalgebras
$\suMnm,$ $\suMnn,$ $\josp_{n|2m}(\corf)$, $ \Jpn,\,\algdt,\,\mathcal{K}_3,\,\mathcal{K}_{10}$,
a superalgebra of superform or a Kantor superalgebra.

\medskip
A
$\jor$-superbimodule
$\bim=\bim_0\dotplus\bim_1$
is called a Jordan superbimodule if the corresponding split null extension
$\mathcal{E}=\jor\oplus\bim$
is a Jordan superalgebra \cite{Jac1}.
Recalling that the split null extension is a direct sum
$\jor\oplus\bim$
of vector spaces with a multiplication that extends the multiplication in
$\jor$
through the action of $\jor$ on $\bim$,
while the product of two arbitrary elements in
$\bim$
is zero.

Let
$\bim$
be a
$\jor$-superbimodule.
The opposite superbimodule
$\obim=\obim_0\dotplus\obim_1$
is defined by the conditions
$\obim_0=\bim_1,\,\obim_1=\bim_0$,
and by the following action of
$\jor$ over $\obim$:
$a\cdot m^{\textnormal{op}}=(-1)^{|a|}(am)^{\textnormal{op}}, \, m^{\textnormal{op}}\cdot a=(ma)^{\textnormal{op}}$
for all
$a\in\jor_0\cup\jor_1,\,m\in\obim_0\cup\obim_1$.
Whenever
$\bim$
is a Jordan
$\jor$-superbimule,
$\obim$ is a Jordan one as well.

\medskip Let $\alga=\jor$ as a vector superspace and let $am$, $ma$ with $m\in\jor$, $a\in\alga$ be the products as defined in the superalgebra $\jor$. It is easy to see that $\alga$ has a natural structure of $\jor$-superbimodule. We call $\alga$ a {\it{regular superbimodule}}.

\medskip
The irreducible superbimodules over the Jordan superalgebras of superform,
$\josp_{n|2m}(\corf)$, $\Jpn$, $\suMnm,$ were classified by C. Martinez and E. Zelmanov \cite{zelmar2}.
E. Zelmanov, C. Martinez and  I. Shestakov \cite{zelmarshe}, classified the irreducible superbimodules for Jordan superalgebras
$\suMn$. Irreducible superbimodules for Jordan superalgebra
$\algdt$ and $\mathcal{K}_3$
were  classified by C. Martinez and E. Zelmanov \cite{zelmar3} and independently by M. Trushina in \cite{Tru}.
C. Martinez and I. Shestakov \cite{marshe}, classified the irreducible superbimodules over the Jordan superalgebra 
$\suMuu$ and 
Shtern classified the irreducible superbimodules over Jordan superalgebras of type $\mathcal{K}_{10}$,
and Kantor superalgebra $\gamma (e_1,\ldots,e_n), \, n\geq 4 $ \cite{Sht1}.

\medskip
\noindent \textbf{The Peirce decomposition} 
Recall, that if $\jor$ is a Jordan (super)algebra with unity $1$, and  $\{e_1,\ldots,e_n\}$ is a 
set of pairwise orthogonal idempotents such that $1=\sum_{i=1}^n e_i$, then $\jor$ 
admits Peirce decomposition \cite{zelmar2},  
it is
$$\jor=\displaystyle{\biggl( \bigoplus_{i=1}^n\jor_{ii} \biggr) \bigoplus \biggl( \bigoplus_{i<j}\jor_{ij}\biggr) }, $$
where 
$\jor_{ii}=\displaystyle{\{\,x\in\jor: \quad e_ix=x,\}}$ and 
$\displaystyle{\jor_{ij}=\{\,x\in\jor: \quad e_ix=\frac{1}{2}x,\quad e_jx=\frac{1}{2}x\,\}}$, if $i\neq j$
are the Peirce components of $\jor$ relative to the idempotents $e_i$, and $ e_j$, 
moreover the following relations hold when $i\neq k,l; j\neq k,l$
\begin{align*}
&\jor_{ij}^2\subseteq \jor_{ii}+\jor_{jj},\quad \jor_{ij}\cdot\jor_{jk}\subseteq \jor_{ik}, \quad  
\jor_{ij}\cdot\jor_{kl}=0.
\end{align*}


\section{Preliminary Reductions for WPT}
As in the case of Jordan algebras, we can make some restrictions before the main proof.
To start we prove the following proposition.

\begin{proposition}\label{pro1}
 Let $\jor$ be a Jordan superalgebra without 1 and with radical $\rad$.

If the WPT is valid for $\jor^{\#}$, then it is also valid for $\jor$.
\end{proposition}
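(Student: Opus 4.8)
The plan is to realise the non-unital algebra $\jor$ as a codimension-one ideal of its unital hull $\jor^{\#}=\corfd 1\dotplus\jor$ and to transport the Wedderburn splitting of $\jor^{\#}$ down to $\jor$ along the canonical augmentation $\varepsilon\colon\jor^{\#}\to\corf$, $\varepsilon(\lambda 1+j)=\lambda$, which is a grading-preserving homomorphism (since $1$ is even) with kernel $\jor$.

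First I would check that passing to the unital hull does not alter the radical, that is $\rad(\jor^{\#})=\rad$. The inclusion $\rad\subseteq\rad(\jor^{\#})$ is immediate, since $\rad$ remains a solvable ideal of $\jor^{\#}$ (multiplication by the external unit is the identity). For the reverse inclusion I would identify $\jor^{\#}/\rad$ with the unital hull $(\jor/\rad)^{\#}$ of the semisimple quotient and prove that the unital hull of a finite-dimensional semisimple Jordan superalgebra is again semisimple. Concretely, if $z=\alpha 1+b$ with $b\in\jor/\rad$ lies in $\rad\bigl((\jor/\rad)^{\#}\bigr)$, then for every $b'$ the product $zb'$ lies in $\rad\bigl((\jor/\rad)^{\#}\bigr)\cap(\jor/\rad)=0$; when $\alpha\neq 0$ this forces $-\alpha^{-1}b$ to act as a unit of $\jor/\rad$ and exhibits $z$ as a nonzero scalar multiple of a nonzero idempotent lying in the radical, which is impossible. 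Hence $\alpha=0$ and $z\in\rad\bigl((\jor/\rad)^{\#}\bigr)\cap(\jor/\rad)=0$.

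Granting this, the WPT for $\jor^{\#}$ yields a subsuperalgebra $T$ with $\jor^{\#}=T\oplus\rad$ such that the restriction $q|_{T}$ of the quotient map $q\colon\jor^{\#}\to\jor^{\#}/\rad$ is an isomorphism onto $\jor^{\#}/\rad=(\jor/\rad)^{\#}$. Since $\varepsilon$ factors as $\varepsilon=\widetilde{\varepsilon}\circ q$ through the augmentation $\widetilde{\varepsilon}$ of $(\jor/\rad)^{\#}$, I would set
$$\algs:=T\cap\jor=\ker\bigl(\varepsilon|_{T}\bigr).$$
Because $q|_{T}$ is an isomorphism carrying $\varepsilon|_{T}$ to $\widetilde{\varepsilon}$, it carries $\algs$ isomorphically onto $\ker\widetilde{\varepsilon}=\jor/\rad$; in particular $\algs\cong\jor/\rad$ is a subsuperalgebra of the required type and, by construction, $\algs\subseteq\jor$.

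It then remains to verify that $\jor=\algs\oplus\rad$. Disjointness is clear, since $\algs\cap\rad\subseteq T\cap\rad=0$; and the dimensions $\dim\algs=\dim(\jor/\rad)$ and $\dim\rad$ add up to $\dim\jor$, while $\algs+\rad\subseteq\jor$, so the sum is direct and exhausts $\jor$. This gives the desired Wedderburn decomposition. The step I expect to require the most care is the radical identity $\rad(\jor^{\#})=\rad$, which must be argued uniformly for all the simple quotient types under consideration, including the non-unital case $\jor/\rad\cong\kap$: there $\jor/\rad$ genuinely lacks a unit, so one cannot shortcut the argument by splitting off a central idempotent, and the idempotent-in-the-radical contradiction above is exactly what rules this out.
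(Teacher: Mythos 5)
Your proposal is correct and follows essentially the same route as the paper: both take the Wedderburn complement $T$ (the paper's $\algs_1$) in $\jor^{\#}$ and set $\algs=T\cap\jor$, your identification of this intersection with $\ker(\varepsilon|_T)$ and your dimension count for $\jor=\algs\oplus\rad$ being minor variants of the paper's element-chasing argument $a=s_1+n\Rightarrow s_1=a-n\in\jor\cap\algs_1$. The only substantive addition is that you actually justify $\rad(\jor^{\#})=\rad$ (via the idempotent-in-a-solvable-ideal contradiction), a point the paper asserts as clear.
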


\begin{proof}
Let
$\jor$
be a Jordan superalgebra without~$1$ and radical $\rad$.
Consider
$\jor^{\#}=\jor\oplus\corfd 1$.
It is clear that
$\rad(\jor)=\rad(\jor^{\#})=\rad$
and
$\jor^{\#}/\rad=(\jor/\rad)^{\#}=\jor/\rad\oplus\corfd \bar{1}$.
By the condition, there exists
$\algs_1\subseteq\jor^{\#}$, $\algs_1\cong\jor^{\#}/\rad\cong(\jor/\rad)^{\#}$, $\algs_1\cap\rad=(0)$,
$\jor^{\#}=\algs_1\oplus \rad$.
Denote
$\algs=\algs_1\cap\jor$,
then
$\algs\cap\rad=(0)$.
Let us show that
$\algs\oplus\rad=\jor$.
Take
$a\in\jor$, then
$a=s_1+n$, $s_1\in\algs_1, \,n\in \rad$.
But
$s_1=a-n\in\jor$.
Hence,
$s_1\in\jor\cap\algs_1=\algs$
and
$a\in \algs\oplus \rad$.
Finally,
$\algs\cong\algs/(\algs\cap\rad)\cong(\algs\oplus\rad)/\rad\cong\jor/\rad$.
\end{proof}

\medskip
Let
$\jor$
be a unital Jordan superalgebra of
$\textnormal{dim }\jor=n$.
Assume that for any unital Jordan superalgebra of dimension less that
$n$
the WPT is true.
A base for induction is
$\dim_{\corf}\jor=1$, $\jor=\corfd 1$.

\begin{proposition}\label{pro2}
Let
$\jor/\rad=\jor_1\oplus\cdots\oplus\jor_k$,
where
$\jor_i$
are unital simple Jordan superalgebras with
$\rad(\jor_i)=0$.
If
$k>1$,
then the WPT is true for
$\jor$.
\end{proposition}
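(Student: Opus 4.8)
The plan is to reduce the statement to the available inductive hypothesis by splitting $\jor$ along a complete system of orthogonal idempotents lifted from the central idempotents of $\jor/\rad$. Since $\jor/\rad=\jor_1\oplus\cdots\oplus\jor_k$ is a direct sum of unital simple superalgebras, its unit decomposes as $\bar 1=\bar e_1+\cdots+\bar e_k$, where each $\bar e_i$ is the unit of $\jor_i$; these are pairwise orthogonal idempotents which are central in $\jor/\rad$. Because $\rad$ is nilpotent (indeed $\radd=0$), I would first lift this family to a complete system of pairwise orthogonal idempotents $e_1,\ldots,e_k\in\jor_0$ with $e_1+\cdots+e_k=1$, using the standard idempotent-lifting procedure modulo a nil ideal. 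This is the technical crux of the argument, and the hypothesis $\radd=0$ makes the successive orthogonalisation completely explicit.

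Next I would pass to the Peirce decomposition of $\jor$ relative to $e_1,\ldots,e_k$,
$$\jor=\Bigl(\bigoplus_{i}\jor_{ii}\Bigr)\oplus\Bigl(\bigoplus_{i<j}\jor_{ij}\Bigr),$$
and transport it through the projection $\jor\to\jor/\rad$. As each $\bar e_i$ is central in $\jor/\rad$, the eigenvalue $\tfrac12$ does not occur there, so the off-diagonal Peirce components of $\jor/\rad$ vanish; hence $\jor_{ij}\subseteq\rad$ for $i\neq j$, while the image of $\jor_{ii}$ is exactly $\jor_i$. Since $\rad$ is an ideal it is stable under the Peirce projections, giving $\rad=\bigl(\bigoplus_i(\jor_{ii}\cap\rad)\bigr)\oplus\bigl(\bigoplus_{i<j}\jor_{ij}\bigr)$. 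Each $\jor_{ii}$ is a unital Jordan subsuperalgebra with unit $e_i$, and $\jor_{ii}\cap\rad$ is a solvable ideal of $\jor_{ii}$ with $(\jor_{ii}\cap\rad)^2=0$; as $\jor_{ii}/(\jor_{ii}\cap\rad)\cong\jor_i$ is simple, I would conclude that $\rad(\jor_{ii})=\jor_{ii}\cap\rad$ and $\jor_{ii}/\rad(\jor_{ii})\cong\jor_i$.

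Because $k>1$, every $\jor_{jj}$ with $j\neq i$ contains the nonzero idempotent $e_j$, so $\dim_{\corf}\jor_{ii}<\dim_{\corf}\jor=n$; the inductive hypothesis therefore applies to each diagonal block and yields subsuperalgebras $\algs_i\cong\jor_i$ with $\jor_{ii}=\algs_i\oplus\rad(\jor_{ii})$. Finally I would reassemble $\algs:=\algs_1+\cdots+\algs_k$. The sum is direct because the $\algs_i$ lie in distinct Peirce components, and the Peirce relation $\jor_{ii}\cdot\jor_{jj}=0$ for $i\neq j$ forces $\algs_i\cdot\algs_j=0$, so $\algs\cong\jor_1\oplus\cdots\oplus\jor_k=\jor/\rad$. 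Combining the block decompositions $\jor_{ii}=\algs_i\oplus(\jor_{ii}\cap\rad)$ with the displayed decomposition of $\rad$ gives $\jor=\algs\oplus\rad$, which is the desired Wedderburn splitting. The only delicate point is the simultaneous lifting of the full orthogonal idempotent family; once that is in place, the Peirce bookkeeping and the induction are routine.
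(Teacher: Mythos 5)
Your proposal is correct and follows essentially the same route as the paper: lift the units $\bar e_i$ of the simple summands to orthogonal idempotents of $\jor$, restrict to the diagonal Peirce components $\jor_{ii}=\{e_i,\jor,e_i\}$ (whose dimension drops since $k>1$), apply the inductive hypothesis there, and reassemble $\algs=\algs_1\oplus\cdots\oplus\algs_k$ using $\algs_i\cdot\algs_j=0$. Your write-up is in fact somewhat more explicit than the paper's (notably in showing $\jor_{ij}\subseteq\rad$ for $i\neq j$ and deducing $\jor=\algs\oplus\rad$ from the Peirce decomposition of $\rad$), but the underlying argument is the same.
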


\begin{proof}
Denote by
$e_i$
the identity elements in
$\jor_i$.
Then (by Jordan algebras results) there are orthogonal idempotents
$f_i\in\jor$
such that
$e_i=f_i+\rad$, $i=1,2,\ldots,k$.
Consider
$\jor_1(f_i)=\{f_i,\jor,f_i\} $,
then
$\jor_1(f_i)/(\jor_1(f_i)\cap\rad)\cong\jor_i $.
By virtue of
$\rad(\jor_i)=0$,
we have the inclusion
$\rad(\jor_1(f_i))\subseteq \jor_1(f_i)\cap\rad$.
Since the inverse inclusion is obvious, we have the equality
$\rad(\jor_1(f_i))=\jor_1(f_i)\cap\rad$.
If
$k>1$,
then
$\dim\jor_1(f_i) < \dim\jor$
and by the inductive hypothesis, there exists
$\algs_i\subseteq\jor_1(f_i)$, $\algs_i\cong\jor_i/(\rad\cap\jor_i)$.
Note that
$\algs_i\cdot\algs_j=0$.
Further,
$\algs=\algs_1\oplus\cdots\oplus\algs_k$
is a direct sum and
$\algs\cong\jor_1\oplus\cdots\oplus\jor_k$.
\end{proof}

\medskip
Now by the Zelmanov Theorem \cite{Zelss},
in the case of characteristic zero,
it is sufficient to prove the WPT for unital finite dimensional Jordan superalgebras $\jor$
satisfying one of the following conditions:

\begin{enumerate}
\item
$\jor/\rad$
is simple unital;
\item
$\jor/\rad=(\mathcal{K}_3\oplus\mathcal{K}_3\oplus\cdots\oplus\kap)\oplus\corfd 1$,
where
$\mathcal{K}_3$
is the Kaplansky superalgebra.
\end{enumerate}

\begin{theorem}\label{teoredu}
Let
$\jor$
be a finite dimensional semisimple Jordan superalgebra, i.e 
$\rad(\jor)=0$,
where
$\rad$
is the solvable radical.
Let 
$\mathfrak{M}(\jor)$ be a class 
of finite dimensional Jordan
$\jor$-superbimodules $\rad$ such that $\mathfrak{M}(\jor)$ 
is closed with respect to subsuperbimodules and homomorphic images.
Denote by
$\mathcal{K}(\mathfrak{M},\jor)$
the class of finite dimensional Jordan superalgebras
$\alga$
that satisfy the following conditions:
\begin{enumerate}
\item
$\alga/\rad(\alga)\cong \jor$,
\item
$\rad(\alga)^2=0$,
\item
$\rad(\alga)$
considered as a
$\jor$-superbimodule,
in 
$\mj$.
\end{enumerate}
Then if WPT is true for all superalgebras
$\algb\in\clasekmj$
with the restriction that the radical 
$\rad(\algb)$
 is an irreducible
$\jor$-superbimodule,
then it is true for all superalgebras
$\alga$
from
$\clasekmj$.
\end{theorem}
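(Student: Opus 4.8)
The plan is to proceed by induction on the dimension $\dim_{\corf}\rad(\alga)$ of the radical viewed as a Jordan $\jor$-superbimodule. The base of the induction is precisely the irreducible case furnished by the hypothesis of the theorem (together with the trivial case $\rad(\alga)=0$, where one simply takes $\algs=\alga$). For the inductive step I take $\alga\in\clasekmj$ with $\rad=\rad(\alga)$ reducible and choose a proper nonzero \emph{irreducible} $\jor$-subsuperbimodule $\wsp\subseteq\rad$. The first point to establish is that $\wsp$ is an ideal of $\alga$: since $\radd=0$, the action of $\alga$ on $\rad$ annihilates $\rad$ itself, so it factors through $\alga/\rad\cong\jor$ and coincides with the given $\jor$-superbimodule structure on $\rad$; as $\wsp$ is a $\jor$-subsuperbimodule it is invariant under this action, whence $\alga\cdot\wsp\subseteq\wsp$ and $\wsp$ is an ideal.

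Next I would pass to the quotient $\bar{\alga}=\alga/\wsp$ and verify that $\bar{\alga}\in\clasekmj$ with $\rad(\bar{\alga})=\rad/\wsp$ of strictly smaller dimension. Indeed $\rad/\wsp$ is solvable and $\bar{\alga}/(\rad/\wsp)\cong\alga/\rad\cong\jor$ is semisimple, so $\rad(\bar{\alga})=\rad/\wsp$; condition (2) holds because $(\rad/\wsp)^2\subseteq\radd/\wsp=0$; and condition (3) holds because $\rad/\wsp$ is a homomorphic image of $\rad\in\mj$ and $\mj$ is closed under homomorphic images. By the inductive hypothesis the WPT holds for $\bar{\alga}$, yielding a subsuperalgebra $\bar{\algs}\cong\jor$ with $\bar{\alga}=\bar{\algs}\oplus(\rad/\wsp)$.

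I would then lift back by setting $\algb=\pi^{-1}(\bar{\algs})$, where $\pi\colon\alga\to\bar{\alga}$ is the canonical projection. Then $\algb$ is a subsuperalgebra of $\alga$ containing $\wsp$, and since $\algb/\wsp\cong\bar{\algs}\cong\jor$ is semisimple while $\wsp$ is solvable, one gets $\rad(\algb)=\wsp$. Thus $\algb\in\clasekmj$ with $\rad(\algb)=\wsp$ irreducible (here I use again that $\wsp$, being a subsuperbimodule of $\rad\in\mj$, lies in $\mj$ by closure under subsuperbimodules). The hypothesis of the theorem now applies to $\algb$ and produces $\algs\cong\jor$ with $\algb=\algs\oplus\wsp$. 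Finally I would check that this same $\algs$ is a Wedderburn complement for all of $\alga$: since $\algs\cong\jor$ is semisimple, $\algs\cap\rad$ is a solvable ideal of $\algs$, hence zero, and the dimension count $\dim\algs+\dim\rad=\dim\jor+\dim\rad=\dim\alga$ forces $\alga=\algs\oplus\rad$ with $\algs\cong\jor\cong\alga/\rad$, completing the induction.

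The main obstacle I anticipate is bookkeeping rather than conceptual: one must confirm at both levels that the three defining conditions of $\clasekmj$ are preserved. Concretely, the fact that $\wsp$ is an ideal rests entirely on $\radd=0$, while the fact that $\rad/\wsp$ and $\wsp$ remain in $\mj$ rests on the assumed closure of $\mj$ under homomorphic images and subsuperbimodules respectively; without these closure properties the two induction steps would leave the class $\clasekmj$. Once these checks are secured, the lifting argument runs exactly as in the classical Wedderburn reduction for Jordan algebras.
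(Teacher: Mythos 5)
Your proof is correct and follows essentially the same route as the paper: induct on the size of the radical, pass to the quotient by a minimal (irreducible) subsuperbimodule, apply the inductive hypothesis there, pull the complement back to a subsuperalgebra whose radical is that irreducible piece, and finish with the irreducible-case hypothesis and a dimension count. Your write-up is in fact a bit more careful than the paper's on two points it glosses over --- the verification that $\wsp$ is an ideal of $\alga$ (via $\radd=0$) and the explicit use of the closure properties of $\mj$ --- but the argument is the same.
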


\begin{proof}
We use the induction on
$\dim\alga$.
The  base of induction is provide by the case 
$\dim\alga=\dim\jor$, so $\alga=\jor$, $\rad(\alga)=0$.
Assume that the theorem is true for all Jordan superalgebras
$\algb\in\clasekmj$
with
$\dim\algb\le \dim\alga$.
Let us set  by 
$\rad=\rad(\alga)$.
If
$\rad$
is an irreducible  $\jor$-superbimodule, then the theorem is true by the conjecture.
Suppose that
$\rad$
is not irreducible, then let us take a minimal
$\jor$-superbimodule
$\bim$ contained in $\rad$.
Since that
$\alga$
is unital
$\jor\bim=\alga\bim=\bim$,
therefore
$\rad$
is irreducible.
Observe that
$\rad/\bim\neq 0$,
otherwise
$\rad=\bim$
would be irreducible.
We see that
$\frac{\alga/\bim}{\rad/\bim}\cong\alga/\rad\cong\jor$.

Since
$\alga/\rad$
is semisimple, we have that
$\rad(\alga/\bim)\subseteq \rad/\bim$.
But
$(\rad/\bim)^2=0$.
Thus
$\rad(\alga/\bim)=\rad/\bim$.
Observe that
$\alga/\bim \in\clasekmj$
and
$\dim\alga/\bim\leq\dim\alga$.
Therefore there exists a subsuperalgebra
$\overline{\algs}\subseteq\alga/\bim $
such that
$\overline{\algs}\cong\frac{\alga/\bim}{\rad/\bim}\cong\alga/\rad$
and
$\alga/\bim=\overline{\algs}\oplus\rad/\bim$.
By the main theorems on homomorphisms, there is a subsuperalgebra
$\algs\subseteq \alga$
such that
$\bim\subseteq \algs$
and
$\algs/\bim\cong \overline{\algs}\cong\alga/\rad\cong\jor$.
We observe that
$\algs\in\clasekmj$
and
$\rad(\algs)=\bim$ is an irreducible $\jor$-superbimodule.
By the assumption, WPT is true for $\algs$,
hence there is a subsuperalgebra
$\algs_1\subseteq\algs\subseteq\alga$,
such that
$\algs_1\cong\algs/\bim\cong\alga/\rad$.
Since
$\algs_1$
is semisimple,
$\rad\cap\algs\subseteq\rad(\algs_1)=0$.
Furthermore,
$\dim\algs_1=\dim\alga-\dim\rad$.
Hence,
$\dim(\rad+\algs_1)=\dim\alga$
and
$\alga=\rad\oplus\algs_1 $.
\end{proof}

 Let
  $V_1,\ldots,V_k$  be  irreducible 
 $\jor$-superbimodules, and $\jor$ be a simple Jordan superalgebra.
  Let
 $\mathfrak{M}(\jor;V_1,\ldots,V_k)=\{V\,/\, V$
 is a
 $\jor$-superbimodule, 
 doesn't containing amoung its irreducible subsuperbimodule any copy isomorphic to one of the superbimodules
 $V_1,\ldots,V_k\}$
 It is clear that
 $\mathfrak{M}$
 is closed with respect to taking of subsuperbimodules and homomorphic images.
 Thus it satisfies the conditions of Theorem \ref{teoredu}.

%

 In each section, we assume that 
 $\alga$ is a finite dimensional Jordan superalgebra over 
 $\corf$, 
 with radical $\rad$ 
 and such that $\radd=0$,  $\alga/\rad\cong\jor$, 
 where $\jor$ is a simple Jordan superalgebra and $\rad$ is an irreducible $\jor$-superbimodule. 
Moreover,  if $b_1,b_2,\ldots, b_n$ is an additive base of $\jor_0$, then 
 we assume that 
 $\wt{b}_1,\wt{b}_2,\ldots,\wt{b}_n$ is an additive base of $\alga_0$, moreover, $\wt{b}_i\cdot\wt{b}_j=\widetilde{b_ib_j}$.
If  $\alga_1/\rad_1\cong \jor_1$ and $v_1,\ldots,v_k$ is an additive base of $\jor_1$, we can assume that 
$\bav_1,\ldots,\bav_k$ is an additive base of $\alga_1/\rad_1$, 
and we shall find 
$\wv_1,\ldots,\wv_k$ additive base of $\alga_1$
such that $\wv_i\cdot \wv_j=\widetilde{v_iv_j}$ and $\wv_i\cdot\wt{b}_j=\widetilde{v_ib_j}$.
 In each case we can assume that 
 $\wt{a}\cdot n=an$, where $\wt{a}\in \alga_0\dot\cup\alga_1$, $a\in\jor_0\dot\cup\jor_1$, $n\in\rad_0\dot\cup\rad_1$


\section{Kac superalgebra}\label{seckac}
 In this section, we consider the 10-dimensional Kac superalgebra 
 $\mathcal{K}_{10}=\jor_0\dotplus\jor_1$, where 
$\displaystyle{
\jor_0=(\corfd e+\sum_{i=1}^{4}\corfd v_i)\oplus \corfd f}$, 
$\displaystyle{\jor_1=\corfd x_1+\corfd x_2+\corfd y_1+\corfd y_2}$, 
and all nonzero products of the basis elements are the following
\begin{eqnarray}
\begin{aligned}\label{eqkac1}
\begin{array}{lllll}
e^2=e, & e\cdot v_i=v_i, & f^2=f,& v_1\cdot v_2=v_3\cdot v_4=2e.
 \end{array}
  \end{aligned} \\
 \begin{aligned}\label{parimparkac}
\begin{array}{lllll}
f\cdot x_j=\frac{1}{2}x_j, & f\cdot y_j=\frac{1}{2}y_j, & e\cdot x_j=\frac{1}{2}x_j, & e\cdot y_j=\frac{1}{2}y_j,\\ 
y_1\cdot v_1=x_2,& y_2\cdot v_1=-x_1,& x_1\cdot v_2=-y_2,& x_2\cdot v_2=y_1, \\
x_2\cdot v_3=x_1,& y_1\cdot v_3=y_2,& x_1\cdot v_4=x_2,& y_2\cdot v_4=y_1.
\end{array}
\end{aligned}\\
\begin{aligned}\label{eqkac2}
x_1\cdot x_2=v_1, &&  x_1\cdot y_2=v_3, && x_2\cdot y_1=v_4, && y_1\cdot y_2=v_2, \\ x_i\cdot  y_i=e-3f.
\end{aligned}
\end{eqnarray}

The zero characteristic of the ground field implies that 
$\mathcal{K}_{10}$ 
is a simple Jordan superalgebra. 
Consider the regular superbimodule over 
$\mathcal{K}_{10}$, 
$\reg(\kac)$ and assume that  
$a\leftrightarrow e$, $b\leftrightarrow f$,  $u_i\leftrightarrow v_i$, $m_j\leftrightarrow x_j$, 
  and 
  $n_j\leftrightarrow y_j$ 
 for $i=1,2,3,4,\, j=1,2$, thus
\begin{displaymath}\begin{array}{l}
(\reg K_{10})_0=\left(\corfd a+\corfd u_1+\corfd u_2+\corfd u_3+\corfd u_4\right)\oplus \corfd b, \\
(\reg K_{10})_1=\corfd m_1+\corfd m_2+\corfd n_1+\corfd n_2
\end{array}
\end{displaymath}
  Let $\alga_0=(\algs_0\oplus\rad_0)$ and $ \alga_1/\rad_1\cong(\kac)_1$. 
  Assume that 
  $\algs_0= \corfd \wt{e}+\sum_{i=1}^{4}\corfd \wt{v}_i\oplus \corfd \wt{f}$, and $(\kac)_0\cong\algs_0$
  and 
  $\alga_1/\rad_1= \corfd \bar{x}_1+\corfd\bar{x}_2+\corfd\bar{y}_1+\corfd\bar{y}_2$.

 \begin{lemma}\label{lemakaccero}
 	\begin{eqnarray}
 	 \begin{aligned}\label{kaclemma1parimpar}
 	\begin{array}{lllll}
 	\wt{f}\cdot \wt{x}_j=\frac{1}{2}\wt{x}_j, & \wt{f}\cdot \wt{y}_j=\frac{1}{2}\wt{y}_j, & \wt{e}\cdot \wt{x}_j=\frac{1}{2}\wt{x}_j, & 
 	\wt{e}\cdot \wt{y}_j=\frac{1}{2}\wt{y}_j,\\ 
 	\wt{y}_1\cdot \wt{v}_1=\wt{x}_2,& \wt{y}_2\cdot \wt{v}_1=-\wt{x}_1,& 
 	\wt{x}_1\cdot \wt{v}_2=-\wt{y}_2,& \wt{x}_2\cdot \wt{v}_2=\wt{y}_1, \\
 	\wt{x}_2\cdot \wt{v}_3=\wt{x}_1,& \wt{y}_1\cdot \wt{v}_3=\wt{y}_2,&
 	 \wt{x}_1\cdot \wt{v}_4=\wt{x}_2,& \wt{y}_2\cdot \wt{v}_4=\wt{y}_1,
 	\end{array}
 	\end{aligned}
 	\end{eqnarray}
 \end{lemma}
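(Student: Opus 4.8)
The plan is to recognize that the twelve relations collected in~\eqref{kaclemma1parimpar} are nothing but the structure constants describing $\jor_1$ as a module over the even part $\jor_0$, so that the lemma amounts to lifting this module to a submodule of $\alga_1$. First I would record the relevant idempotent structure. In $\kac$ the unit is $1=e+f$, and by~\eqref{eqkac1} the elements $e,f$ are orthogonal idempotents; moreover the even part $\jor_0=\bigl(\corfd e+\sum_{i=1}^4\corfd v_i\bigr)\oplus\corfd f$ is a semisimple Jordan algebra, being the direct sum of the simple Jordan algebra of a nondegenerate symmetric bilinear form on $\mathrm{span}(v_1,\dots,v_4)$ (with unit $e$) and the one--dimensional algebra $\corfd f$. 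Since $\algs_0\cong\jor_0$ sits inside $\alga_0$, it provides orthogonal idempotents $\wt e,\wt f$ with $\wt e+\wt f=1$, and $\alga_1$ becomes a Jordan $\jor_0$-bimodule via the action of $\algs_0$.

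Next I would observe that every product appearing in~\eqref{kaclemma1parimpar} is an action of an even element ($\wt e$, $\wt f$, or one of the $\wt v_i$) on an odd one; together with the products that vanish in $\kac$, these relations encode exactly the $\jor_0$-module $\jor_1$ on the basis $x_1,x_2,y_1,y_2$. Hence it suffices to produce odd elements $\wt x_j,\wt y_j\in\alga_1$ lifting $\bar x_j,\bar y_j$ and spanning a $\jor_0$-submodule of $\alga_1$ isomorphic to $\jor_1$. To get such a submodule I would split the short exact sequence of $\jor_0$-bimodules
$$0\longrightarrow \rad_1\longrightarrow \alga_1\longrightarrow \jor_1\longrightarrow 0,$$
which is legitimate because $\rad_1$ is a $\jor_0$-subbimodule of $\alga_1$ (as $\rad$ is an ideal) and $\alga_1/\rad_1\cong\jor_1$ as $\jor_0$-bimodules. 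Since $\jor_0$ is finite dimensional and semisimple over a field of characteristic $0$, every finite dimensional Jordan $\jor_0$-bimodule is completely reducible, so this sequence splits: there is a $\jor_0$-submodule $U\subseteq\alga_1$ with $\alga_1=U\oplus\rad_1$ and a module isomorphism $s\colon\jor_1\to U$ inverting the projection. Setting $\wt x_j=s(x_j)$ and $\wt y_j=s(y_j)$ gives lifts of $\bar x_j,\bar y_j$, and because $s$ intertwines the $\jor_0$-actions, these elements satisfy precisely the relations~\eqref{kaclemma1parimpar}.

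The one point that must be handled with care is the appeal to complete reducibility, and I expect this to be the crux. Yet the essential observation is that here the acting algebra is the \emph{even} part $\jor_0$, so $\alga_1$ is an \emph{ordinary} (non-super) Jordan bimodule; the cohomological obstructions that cause the WPT to fail in the super setting involve odd elements acting and do not intervene at this stage. This is why the even--odd products can always be normalized without any restriction, and it isolates the genuine difficulty into the odd--odd products treated in the subsequent lemmas.

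If one prefers to avoid the general complete-reducibility theorem, an equivalent and more computational route is available: pass to the Peirce component $\alga_{ef}\cap\alga_1$ relative to $\wt e,\wt f$ (the remaining odd Peirce components of $\alga_1$ lie in $\rad_1$, since they map to $0$ in $\jor_1$), choose arbitrary lifts $x_j^{(0)},y_j^{(0)}$, and then solve the resulting linear system $c_i+R_{v_i}(\,\cdot\,)=\pm(\text{lift})$ for the radical corrections $p_j,q_j\in\rad_1$, where $c_i\in\rad_1$ measures the defect of each relation and $R_{v_i}$ denotes the module action. The consistency of that system is guaranteed by $\radd=0$ together with the super Jordan identity~\eqref{idsupjordan}, and it reproduces the same lifts.
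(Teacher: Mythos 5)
Your proof is correct, but it takes a genuinely different route from the paper's. The paper argues by direct computation: it writes each product $\wt{e}\cdot\wt{x}_j$, $\wt{x}_i\cdot\wt{v}_j$, $\wt{y}_i\cdot\wt{v}_j$ as the expected basis element plus an undetermined element of $\rad_1$, obtains the four relations with $\wt{e},\wt{f}$ from an eigenvalue argument ($\tfrac52\Lambda=3\Lambda$, hence $\Lambda=0$), and then eliminates all the remaining coefficients $\lambda^{ijx}_{\ast},\lambda^{ijy}_{\ast}$ through a long chain of substitutions into the super Jordan identity \eqref{idsupjordan}. You instead observe that the twelve relations in \eqref{kaclemma1parimpar} say precisely that the odd lifts span a $\jor_0$-submodule of $\alga_1$ isomorphic to $\jor_1$, reduce the lemma to splitting the extension $0\to\rad_1\to\alga_1\to\jor_1\to0$ of ordinary Jordan $\jor_0$-bimodules, and invoke complete reducibility of finite-dimensional bimodules over the semisimple Jordan algebra $\jor_0=J(V,f)\oplus\corfd f$ in characteristic zero. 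The ingredients all check out: the superidentity \eqref{idsupjordan} with exactly one odd argument carries no signs, so $\alga_1$ is an honest (non-super) Jordan $\algs_0$-module, $\rad_1$ is a submodule, and the classical complete-reducibility theorem applies. Your route buys brevity, reusability (the analogous even--odd normalization steps for the superform and $\algdt$ cases would follow identically), and a conceptual reason why no obstruction can occur at the even--odd stage; its only cost is reliance on an external classical theorem where the paper is self-contained. One point actually favours your formulation: taken literally, the paper's computation would give $\Lambda^{11}_y=0$ for an arbitrary preimage $\wt{x}_2$, which cannot hold (replacing $\wt{x}_2$ by $\wt{x}_2+m_1$ shifts $\Lambda^{11}_y$ by $-m_1$), so the lemma must be read as asserting that the lifts \emph{can be chosen} to satisfy \eqref{kaclemma1parimpar}; your construction establishes exactly that existence statement, which is all the subsequent Lemmas \ref{forkac} and \ref{lemasolkac} require.
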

 \proof 
 Firts prove $\wt{e}\,\wt{x}_1=\frac{1}{2}\wt{x}_1$.  
To start, we can assume that  there exist scalars $\lambda^{x_1e}_{s_1}s_1$, such that 
$\wt{e}\,\wt{x}_1=\frac{1}{2}\wt{x}_1+\Lambda^{x_1}_e=\frac{1}{2}\wt{x}_1+\lambda^{x_1e}_{m_1}m_1+\lambda^{x_1e}_{m_2}m_2+\lambda^{x_1e}_{n_1}n_1+\lambda^{x_1e}_{n_2}n_2$.

It is easy to see that 
  $\Lambda^{x_i}_e\cdot \wt{e}=\frac{1}{2}\Lambda^{x_i}_e$. 
Substituting 
 $a_i=\wt{x}_1$ and $a_j=a_k=a_l=\wt{e}$ 
 in \eqref{idsupjordan}, we get
 $$2((\wt{x}_1\cdot \wt{e})\cdot \wt{e})\cdot \wt{e}+\wt{x}_1\cdot \wt{e}=3(\wt{x}_1\cdot \wt{e})\cdot \wt{e}.$$
 Combining the above equality with 
 $\wt{x}_1\cdot \wt{e}=\frac{1}{2}\wt{x}_1+\Lambda^{x_1}_e$, we have
 $\frac{5}{2}\Lambda^{x_1}_e=3\Lambda^{x_1}_e$, 
 therefore, 
 $\lambda^{x_1e}_{m_1}=\lambda^{x_1e}_{m_2}=\lambda^{x_1e}_{n_1}=\lambda^{x_1e}_{n_2}=0$. Thus, 
 $\Lambda^{x_1}_e=0$ and  $\wt{x}_1\cdot \wt{e}=\frac{1}{2}\wt{x}_1$.
 Similarly one can prove the equalities
 $\wt{x}_2\cdot \wt{e}=\frac{1}{2}\wt{x}_2$, $\wt{x}_i\cdot \wt{f}=\frac{1}{2}\wt{x}_i$, $\wt{y}_i\cdot \wt{e}=\frac{1}{2}\wt{y}_i$ and $\wt{y}_i\cdot \wt{f}=\frac{1}{2}\wt{y}_i$.

 \medskip 
 Now we shall prove that others equalities in \eqref{kaclemma1parimpar} hold.
 Let $\Lambda^{ij}_x$ be the radical part in the product $\wt{x}_i\cdot \wt{v}_j$ where $\Lambda^{ij}_x=\lambda^{ijx}_{m_1}m_1+\lambda^{ijx}_{m_2}m_2+\lambda^{ijx}_{n_1}n_1+\lambda^{ijx}_{n_2}n_2$ 
 for some scalars 
 $\lambda^{ijx}_{m_1}$, $\lambda^{ijx}_{m_2}$, $\lambda^{ijx}_{n_1}$ and $\lambda^{ijx}_{n_2}$. (Similarly, $\Lambda^{ij}_y$.)
 
 \medskip 
 Firts note that 
  $(\Lambda^{ij}_s\cdot \wt{v}_j)\cdot \wt{v}_j=0$ 
  for $s=x$ or $s=y$.
  
 \noindent 
 We set $a_i=\wt{y}_1$ and  $a_j=a_k=a_l=\wt{v}_1$ in \eqref{idsupjordan}. 
Since $\wt{v}_i^2=0$, we have  
  \begin{align*}
   0=((\wt{y}_1\cdot \wt{v}_1)\cdot \wt{v}_1)\cdot \wt{v}_1=((\wt{x}_2+\Lambda^{11}_y)\cdot \wt{v}_1)\cdot \wt{v}_1=(\wt{x}_2\cdot \wt{v}_1)\cdot \wt{v}_1=\Lambda^{21}_x\cdot \wt{v}_1.
  \end{align*}
 Thus, $\lambda^{21x}_{n_1}m_2-\lambda^{21x}_{n_2}m_1=0$. 
 The linear independence of 
 $m_1$ and $m_2$ implies 
 $\lambda^{21x}_{n_1}=\lambda^{21x}_{n_2}=0$ 
 and therefore $\Lambda^{21}_x=\lambda^{21x}_{m_1}m_1+\lambda^{21x}_{m_2}m_2$.
 Similarly one can prove that 
 $\Lambda^{11}_x=\lambda^{11x}_{m_1}m_1+\lambda^{11x}_{m_2}m_2$, $\Lambda^{12}_y=\lambda^{12y}_{n_1}n_1+\lambda^{12y}_{n_2}n_2$, $\Lambda^{22}_y=\lambda^{22y}_{n_1}n_1+\lambda^{22y}_{n_2}n_2$, $\Lambda^{13}_x=\lambda^{13x}_{m_2}m_2+\lambda^{13x}_{n_1}n_1$, $\Lambda^{23}_y=\lambda^{23y}_{m_2}m_2+\lambda^{23y}_{n_1}n_1$, $\Lambda^{24}_x=\lambda^{24x}_{m_1}m_1+\lambda^{24x}_{n_2}n_2$ and 
 $\Lambda^{14}_y=\lambda^{14y}_{m_1}m_1+\lambda^{14y}_{n_2}n_2$. 
 
 \medskip
 Substituing
 $a_i=\wt{y}_1$, $a_j=\wt{v}_1$ and $a_k=a_l=\wt{v}_2$ 
 in \eqref{idsupjordan}, we have, 
 \begin{equation}\label{q1122}
  ((\wt{y}_1\cdot \wt{v}_1)\cdot \wt{v}_2)\cdot \wt{v}_2+((\wt{y}_1\cdot \wt{v}_2)\cdot \wt{v}_2)\cdot \wt{v}_1+((\wt{v}_1\cdot \wt{v}_2)\cdot \wt{v}_2)\cdot \wt{y}_1=2(\wt{v}_1\cdot \wt{v}_2)\cdot (\wt{y}_1\cdot \wt{v}_2)
 \end{equation}
 Observe that 
 $\Lambda^{12}_y=\lambda^{12y}_{n_1}n_1+\lambda^{12y}_{n_2}n_2$, 
 therefore 
 $\Lambda^{12}_y\cdot \wt{v}_2=0$. 
 Recall that 
 $\wt{v}_1\cdot \wt{v}_2=2\wt{e}$, $\wt{e}\cdot \wt{v}_i=\wt{v}_i$ and $\wt{e}\cdot (\wt{y}_1\cdot \wt{v}_2)=\frac{1}{2}\wt{y}_1\cdot \wt{v}_2$. 
 Thus,  
 combining the above observation with \eqref{q1122}, we obtain the equality 
 \begin{align*}
 0
 =& ((\wt{y}_1\cdot \wt{v}_1)\cdot \wt{v}_2)\cdot \wt{v}_2
 =\Lambda^{12}_y+\Lambda^{22}_x\cdot \wt{v}_2 
 = \Lambda^{12}_y+\Lambda^{22}_x\cdot \wt{v}_2,
 \end{align*} therefore
 $( \lambda^{12y}_{n_1}+\lambda^{22x}_{m_2}) \,n_1+(\lambda^{12y}_{n_2}-\lambda^{22x}_{m_1})\,n_2=0$.
 Using the fact that $n_1$ and $n_2$ are linearly independent, we obtain
 $\lambda^{12y}_{n_1}=-\lambda^{22x}_{m_2}$ and  $\lambda^{12y}_{n_2}=\lambda^{22x}_{m_1}$. 
 
 Taking 
 $a_i=\wt{y}_1$, $a_j=\wt{v}_1$ and $a_k=a_l=\wt{v}_3$ 
 in \eqref{idsupjordan} we have, 
 $$0=((\wt{y}_1\cdot \wt{v}_1)\cdot \wt{v}_3)\cdot \wt{v}_3+((\wt{y}_1\cdot \wt{v}_3)\cdot \wt{v}_3)\cdot \wt{v}_1.$$
 Thus we obtain $\lambda^{13x}_{n_2}=-\lambda^{23x}_{n_1}$.

  Using 
  $0=((\wt{y}_2\cdot \wt{v}_1)\cdot \wt{v}_2)\cdot \wt{v}_2+((\wt{y}_2\cdot \wt{v}_2)\cdot \wt{v}_2)\cdot \wt{v}_1$, 
  we obtain 
  $\lambda^{12x}_{m_1}=\lambda^{22y}_{n_2}$ and $\lambda^{12x}_{m_2}=-\lambda^{22y}_{n_1}$.
  Since, 
  $0=((\wt{y}_2\cdot \wt{v}_1)\cdot \wt{v}_4)\cdot \wt{v}_4+((\wt{y}_2\cdot \wt{v}_4)\cdot \wt{v}_4)\cdot \wt{v}_1$,
  then 
  $\lambda^{24x}_{n_1}=-\lambda^{14x}_{n_2}$.
  
  \medskip
  Similarly, we obtain   
  $\lambda^{21x}_{m_1}=\lambda^{11y}_{n_2}$,
  $\lambda^{21x}_{m_2}=-\lambda^{11y}_{n_1}$,
  $\lambda^{23y}_{m_1}=-\lambda^{13y}_{m_2}$,
 $\lambda^{23y}_{n_2}=-\lambda^{13y}_{n_1}$,
 $\lambda^{23x}_{m_2}=-\lambda^{13x}_{m_1}$,
 $\lambda^{12y}_{n_1}=-\lambda^{12x}_{m_2}$,
 $\lambda^{12y}_{n_2}=\lambda^{12x}_{m_1}$,
 $\lambda^{24x}_{m_2}=-\lambda^{14x}_{m_1}$,
 $\lambda^{11x}_{m_2}=\lambda^{21y}_{n_1}$,
 $\lambda^{11x}_{m_1}=-\lambda^{21y}_{n_2}$,
 $\lambda^{14y}_{m_2}=-\lambda^{24y}_{m_1}$,
 $\lambda^{14y}_{n_1}=-\lambda^{24y}_{n_2}$,  
 $\lambda^{22y}_{n_2}=-\lambda^{12x}_{m_1}$,
 $\lambda^{13x}_{m_1}=-\lambda^{23x}_{m_2}$, 
 $\lambda^{11y}_{n_2}=\lambda^{11y}_{n_1}=0$.
 Thus, we have 
 $\Lambda^{21}_x=0$, $\Lambda^{12y}=\Lambda^{22y}=\lambda^{12y}_{n_1}n_1$ 
 and 
  $\Lambda^{11}_y=\lambda^{11y}_{m_1}m_1+\lambda^{11y}_{m_2}m_2$.
 
 Let $a_i=\wt{y}_1$, $a_j=a_l=\wt{v}_1$ and $a_k=\wt{v}_2$ in 
 \eqref{idsupjordan}. 
 Then we have 
 $\wt{y}_1\cdot \wt{v}_1=((\wt{y}_1\cdot \wt{v}_1)\cdot \wt{v}_2)\cdot \wt{v}_1$,
 therefore, 
 $\lambda^{22x}_{n_1}=-\lambda^{11y}_{m_2}$ and 
 $\lambda^{22x}_{n_2}=\lambda^{11y}_{m_1}$. 
 Similarly, we can obtain 
 $\lambda^{11x}_{m_1}=\lambda^{23x}_{n_2}$, 
 $\lambda^{11x}_{m_2}=-\lambda^{23x}_{n_1}$, 
 $\lambda^{24x}_{n_1}=\lambda^{14x}_{n_2}=0$, 
 $\lambda^{12x}_{n_1}=\lambda^{12x}_{n_2}=0$, 
 $\lambda^{13x}_{m_1}=-\lambda^{21y}_{n_1}$, 
 $\lambda^{13x}_{n_2}=0$, 
 $\lambda^{14x}_{n_1}=\lambda^{21y}_{n_2}$, 
 $\lambda^{21y}_{m_1}=\lambda^{21y}_{m_2}=0$, 
 $\lambda^{23y}_{n_1}=\lambda^{12x}_{m_2}$, 
 $\lambda^{24y}_{m_1}=0$, 
 $\lambda^{12y}_{n_1}=-\lambda^{24y}_{m_2}$, 
 $\lambda^{22y}_{n_1}=-\lambda^{13x}_{m_2}$,
 $\lambda^{13y}_{m_1}=-\lambda^{22x}_{m_2}$,  
 $\lambda^{23y}_{n_2}=0$,
 $\lambda^{14x}_{m_2}=-\lambda^{23x}_{m_1}$,  
 $\lambda^{24y}_{n_1}=-\lambda^{13y}_{n_2}$,  
 $\lambda^{14y}_{n_1}=-\lambda^{22x}_{n_2}$,  
 $\lambda^{11y}_{m_1}=\lambda^{24y}_{n_2}$. 
 
 Thus, we have 
 $\Lambda^{12}_x=\Lambda^{13}_x=\Lambda^{24}_x=\Lambda^{12}_y=\Lambda^{22}_y=\Lambda^{23}_y=0$.
 
 \medskip
 Setting 
 $a_i=\wt{y}_1$, $a_j=\wt{v}_1$, $a_k=\wt{v}_3$ and $a_l=\wt{v}_2$ 
 in  \eqref{idsupjordan}, we obtain 
 $$((\wt{y}_1\cdot \wt{v}_1)\cdot \wt{v}_3)\cdot \wt{v}_2+((\wt{y}_1\cdot \wt{v}_2)\cdot \wt{v}_3)\cdot \wt{v}_1+\wt{y}_1\cdot \wt{v}_3=0.$$
 Therefore we have 
 $\lambda^{11y}_{m_2}=\lambda^{13y}_{n_2}$.  
 Similarly one  can prove the equalities 
 $\lambda^{11x}_{m_1}=\lambda^{23x}_{m_1}=\lambda^{22x}_{n_1}=0$.
 Thus
 $\Lambda^{11}_{x}=\Lambda^{14}_{x}=\Lambda^{22}_{x}=\Lambda^{23}_{x}=
 \Lambda^{11}_{y}=\Lambda^{13}_{y}=\Lambda^{14}_{y}=\Lambda^{21}_{y}=\Lambda^{24}_{y}=0$. 
 \endproof


\begin{lemma}\label{forkac}
There exist $\alpha\in\corf$ such that
\begin{equation}\begin{aligned}
\textnormal{\bf{(i)} } & \wt{x}_1\cdot  \wt{x}_2=\wt{u}_1+\alpha u_1, & 
\textnormal{\bf{(ii)} } & \wt{y}_1\cdot \wt{y}_2=\wt{y}_2+\alpha u_2 \\
\textnormal{\bf{(iii)} }&  \wt{x}_1\cdot \wt{y}_2=\wt{y}_3+\alpha u_3, & 
\textnormal{\bf{(iv)} } &  \wt{x}_2\cdot \wt{y}_1= \wt{y}_4+\alpha u_4\\
\textnormal{\bf{(v)} } &  \wt{x}_1\cdot \wt{y}_1=\wt{e}-3\wt{f}+\alpha a-3\alpha b &
\textnormal{\bf{(vi)} } &  \wt{x}_2\cdot \wt{y}_2=\wt{e}-3\wt{f}+\alpha a-3\alpha b 
\end{aligned}\end{equation}
\end{lemma}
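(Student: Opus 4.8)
The plan is to record each product of two odd generators as its $\kac$-value, lifted into $\algs_0$, plus an unknown radical term: write $\wt{x}_1\cdot\wt{x}_2=\wt{v}_1+P_1$, $\wt{y}_1\cdot\wt{y}_2=\wt{v}_2+P_2$, $\wt{x}_1\cdot\wt{y}_2=\wt{v}_3+P_3$, $\wt{x}_2\cdot\wt{y}_1=\wt{v}_4+P_4$ and $\wt{x}_i\cdot\wt{y}_i=\wt{e}-3\wt{f}+R_i$, where each of $P_1,\dots,P_4,R_1,R_2$ lies in $\rad_0=\corfd a+\sum_{i=1}^4\corfd u_i+\corfd b$. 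The goal is to prove that, after fixing $\alpha$ by the value of $P_1$, every radical term is exactly $\alpha$ times the image of the corresponding element of $\kac$ under the regular embedding $e\mapsto a$, $v_i\mapsto u_i$, $f\mapsto b$, $x_j\mapsto m_j$, $y_j\mapsto n_j$ of $\kac$ into $\reg(\kac)$.

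First I would pin down the shape of each radical term. By Lemma~\ref{lemakaccero} we have $\wt{e}\,\wt{x}_j=\wt{f}\,\wt{x}_j=\tfrac12\wt{x}_j$ and likewise for $\wt{y}_j$, so each odd generator is a Peirce-$\tfrac12$ element for both idempotents, and the same lemma makes every mixed product $\wt{x}_i\wt{v}_j$, $\wt{y}_i\wt{v}_j$ free of radical corrections. Substituting two odd generators and two even elements chosen among $\wt{e},\wt{f},\wt{v}_j$ into the linearized superidentity~\eqref{idsupjordan} therefore collapses, after using the multiplication table~\eqref{eqkac1}--\eqref{eqkac2}, to a linear relation among the scalar coefficients of the radical terms. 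Concretely, evaluating the identity on $\wt{x}_1,\wt{x}_2,\wt{v}_2,\wt{v}_1$ (so that $\wt{v}_1\wt{v}_2=2\wt{e}$) annihilates the $u_2,u_3,u_4$-coefficients of $P_1$, while evaluating it on $\wt{x}_1,\wt{x}_2,\wt{v}_3,\wt{v}_4$ (so that $\wt{v}_3\wt{v}_4=2\wt{e}$) forces the $a$- and $b$-coefficients of $P_1$ to vanish; what survives is $P_1=\alpha u_1$, which I take as the definition of $\alpha$. Running the same scheme with the $\wt{v}_j$'s dictated by the table confines $P_2,P_3,P_4$ to $\corfd u_2,\corfd u_3,\corfd u_4$ and each $R_i$ to $\corfd a+\corfd b$.

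The decisive step is to show that this single $\alpha$ already governs the remaining five corrections. Here I would feed the internal relations of $\kac$, such as $\wt{x}_2=\wt{y}_1\wt{v}_1$ and $\wt{y}_1=\wt{x}_2\wt{v}_2$, into~\eqref{idsupjordan}: expanding $\wt{x}_1\cdot\wt{x}_2=\wt{x}_1\cdot(\wt{y}_1\wt{v}_1)$ produces the terms $(\wt{x}_1\wt{y}_1)\wt{v}_1$ and $(\wt{x}_1\wt{v}_1)\wt{y}_1$ and so begins to link $P_1$ with $R_1$; cycling this through the eight relations of~\eqref{parimparkac} connects $P_2,P_3,P_4,R_2$ back to $\alpha$. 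Comparing radical components and using the linear independence of $a,u_1,\dots,u_4,b$ then forces $P_2=\alpha u_2$, $P_3=\alpha u_3$, $P_4=\alpha u_4$ and $R_1=R_2=\alpha a-3\alpha b$; the ratio $-3$ in the $R_i$ is inherited automatically from $x_iy_i=e-3f$ together with the equal weights $\wt{e}\,\wt{x}_j=\wt{f}\,\wt{x}_j$, i.e. it is nothing but the regular image $\alpha(a-3b)$ of $\alpha(e-3f)$.

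The main obstacle is precisely this coordination. Each individual reduction is a routine, if tedious, instance of~\eqref{idsupjordan}, but guaranteeing that the coefficients extracted from genuinely different chains of substitutions agree---so that one scalar $\alpha$ suffices for all six products---requires scrupulous and consistent tracking of the $\mathbb{Z}_2$-signs appearing in~\eqref{idsupjordan}; a single sign slip would manufacture a spurious independent parameter and destroy the uniform conclusion. I would therefore organize the whole computation around the fixed regular embedding above and verify that the family of corrections is an $\alpha$-multiple of it, which is the structural reason only one parameter can occur.
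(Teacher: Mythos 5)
Your proposal follows essentially the same route as the paper: write each odd--odd product as its lift into $\algs_0$ plus an undetermined element of $\rad_0$ expanded in the basis $a,b,u_1,\dots,u_4$ of $(\reg\kac)_0$, then apply the linearized super-Jordan identity \eqref{idsupjordan} to quadruples of lifted generators to annihilate the unwanted coefficients and tie the surviving ones to a single scalar $\alpha$. The only soft spot is the claim that the ratio $1:-3$ in $R_1=R_2=\alpha a-3\alpha b$ is ``inherited automatically'': in the paper this step requires two further substitutions with four odd arguments (namely $\wt{x}_1,\wt{x}_2,\wt{y}_1,\wt{y}_1$, giving $2\alpha=-(\eta_a+\eta_b)$, and then $\wt{x}_1,\wt{y}_1,\wt{x}_2,\wt{y}_2$ to separate $\eta_a$ from $\eta_b$), since substitutions involving only the $\wt{v}_j$ act trivially on the $b$-component and cannot distinguish it.
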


\proof 
We can assume that there exist 
$\Lambda^{12}_{x}, \Lambda^{12}_{y}, \Lambda^{12}_{xy}, \Lambda^{21}_{xy}, \Lambda^{11}_{xy}$ and $\Lambda^{22}_{xy}\in\rad_0$ 
such that 
$\wt{x}_1\cdot \wt{x}_2=\wt{v}_1+\Lambda^{12}_x$,  
$\wt{y}_1\cdot \wt{y}_2=\wt{v}_2+\Lambda^{12}_x$,
$\wt{x}_1\cdot \wt{y}_2=\wt{v}_3+\Lambda^{12}_{xy}$,
$\wt{x}_2\cdot \wt{y}_1=\wt{v}_4+\Lambda^{21}_{xy}$,
$\wt{x}_1\cdot \wt{y}_1=\wt{e}-3\wt{f}+\Lambda^{11}_{xy}$ and 
$\wt{x}_2\cdot \wt{y}_2=\wt{e}-3\wt{f}+\Lambda^{22}_{xy}$.

\medskip 
We assume that there exist 
$\eta^{tij}_a$, $\eta^{tij}_b$, $\eta^{tij}_{u_1}$, $\eta^{tij}_{u_2}$, $\eta^{tij}_{u_3}$ 
and 
$\eta^{tij}_{u_4}\in\corf$ such that
 $\Lambda^{ij}_{t}=\eta^{tij}_aa+\eta^{tij}_bb+\eta^{tij}_{u_1}u_1+\eta^{tij}_{u_2}u_2+\eta^{tij}_{u_3}u_3+\eta^{tij}_{u_4}u_4$
for $i,j\in\{1,2\}$ and $t\in\{x,y,xy\}$.

\noindent 
Replacing  
$a_i=\wt{x}_1$, $a_j=\wt{x}_2$, $a_k=a_l=\wt{v}_1$ in the equation \eqref{idsupjordan} and using \eqref{kaclemma1parimpar}; 
we have 
$((\wt{x}_1\cdot \wt{x}_2)\cdot \wt{v}_1)\cdot \wt{v}_1=0$, thus 
\begin{equation*}
\begin{aligned}
0= & 
((\wt{v}_1+\eta^{x12}_aa+\eta^{x12}_bb+\eta^{x12}_{u_1}u_1+\eta^{x12}_{u_2}u_2+\eta^{x12}_{u_3}u_3+\eta^{x12}_{u_4}u_4)
\cdot \wt{v}_1)\cdot \wt{v}_1\\
=& (\eta^{x12}_a u_1+2\eta^{x12}_{u_2}a)\cdot \wt{v}_1=2\eta^{x12}_{u_2}u_1.
\end{aligned}
\end{equation*} 
Therefore, 
$\eta^{x12}_{u_2}=0$. 
In the same way one can prove that 
$\eta^{x12}_{u_4}=\eta^{x12}_{u_3}=0$, 
thus
\begin{equation}\label{p1p2}
\wt{x}_1\cdot \wt{x}_2=\wt{v}_1+\eta^{x12}_aa+\eta^{x12}_bb+\eta^{x12}_{u_1}u_1.
\end{equation}
Similarly,  we obtain that $\wt{y}_1\cdot \wt{y}_2=\wt{v}_2+\eta^{y12}_aa+\eta^{y12}_bb+\eta^{y12}_{u_2}u_2.$

Since \eqref{kaclemma1parimpar} and replacing 
$a_i=\wt{x}_1, \, a_j=\wt{x}_2$ and $a_k=a_t=\wt{v}_2$ 
in \eqref{idsupjordan},  we obtain
\begin{align}\label{igualdad13}
((\wt{x}_1\cdot \wt{x}_2)\cdot \wt{v}_2)\cdot \wt{v}_2=2(\wt{x}_1\cdot \wt{v}_2)\cdot( \wt{v}_2\cdot \wt{x}_2)=2\wt{y}_1\cdot \wt{y}_2.
\end{align}
Replacing  \eqref{p1p2} and its equivalent for $\wt{y}_1\cdot \wt{y}_2$ in \eqref{igualdad13}, 
we obtain
$2\wt{v}_2+\eta^{x12}_{u_1}u_2=2(\wt{v}_2+\eta^{y12}_aa+\eta^{y12}_bb+\eta^{y12}_{u_2}u_2)$, 
therefore 
$\eta^{y12}_a=\eta^{y12}_b=0$ 
and 
$\eta^{x12}_{u_1}=\eta^{y12}_{u_2}$. 
If we take 
$a_i=\wt{y}_1, \, a_j=\wt{y}_2$ and $a_k=a_t=\wt{v}_1$ in \eqref{idsupjordan} 
we obtain 
$\eta^{x12}_a=\eta^{x12}_b=0$.

Let 
$a_i=\wt{x}_1, \, a_j=\wt{y}_2$ and $a_k=a_t=\wt{v}_1$ 
in \eqref{idsupjordan},  thus, we obtain $\eta^{xy12}_{u_2}=0$.
If we shall take 
$a_k=a_t=\wt{v}_2$ or 
$a_k=a_t=\wt{v}_3$ we obtain 
$\eta^{xy12}_{u_1}=\eta^{xy12}_{u_4}=0$.
Similarly to above case  we obtain
$\eta^{xy21}_{u_1}=\eta^{xy21}_{u_2}=\eta^{xy21}_{u_3}=0$. 

Setting 
$a_i=\wt{v}_1, \, a_j=\wt{y}_2$ and $a_k=a_t=\wt{v}_4$ 
(respectively, $a_i=\wt{x}_2, \, a_j=\wt{y}_1$ and  $a_k=a_t=\wt{v}_3$)
in \eqref{idsupjordan},
we obtain $\eta^{xy21}_a=\eta^{xy21}_b=0$ 
(respectively $\eta^{xy12}_a=\eta^{xy12}_b=0$) and 
$\eta^{xy21}_{u_4}=\eta^{xy12}_{u_3}$.

If we take
$a_i=\wt{x}_1$,  $a_j=\wt{x}_2$,  $a_k=\wt{v}_2$ and $a_t=\wt{v}_4$ 
in \eqref{idsupjordan}, then using \eqref{kaclemma1parimpar} we have 
$((\wt{x}_1\cdot \wt{x}_2)\cdot \wt{v}_2)\cdot \wt{v}_4=2\wt{x}_2\cdot \wt{y}_1$.  
Therefore,
$\eta^{xy21}_{u_4}=\eta^{x12}_{u_1}$. 
Thus we get
$\wt{x}_1\cdot \wt{x}_2=\wt{v}_1+\alpha u_1$, $\wt{y}_1\cdot \wt{y}_2=\wt{v}_2+\alpha u_2$, $\wt{x}_1\cdot \wt{y}_2=\wt{v}_3+\alpha u_3$ and 
$\wt{x}_2\cdot \wt{y}_1= \wt{v}_4+\alpha u_4$ 
for some $\alpha\in\corf$.
 
\medskip 
Let  
 $a_i=\wt{x}_1$, $a_j=\wt{y}_1$, and $a_k=a_t=\wt{v}_1$ 
 in \eqref{idsupjordan}. 
 Using the products in  $\algs_0$ and \eqref{kaclemma1parimpar}, 
 we obtain 
 $((\wt{x}_1\cdot \wt{y}_1)\cdot \wt{v}_1)\cdot \wt{v}_1=0$.
 Thus 
 $((\wt{x}_1\cdot \wt{y}_1)\cdot \wt{v}_1) \cdot \wt{v}_1=0$ 
 and therefore 
$\eta^{xy11}_{u_2}=0$. 
Analogously, one can verify that  
$\eta^{xy11}_{u_1}=\eta^{xy11}_{u_3}=\eta^{xy11}_{u_4}=0$.
Thus 
$\wt{x}_1\cdot \wt{y}_1=\wt{e}-3\wt{f}+\eta^{xy11}_{a}a+\eta^{xy11}_{b}$.
Similarly one can show that  
$\wt{x}_2\cdot \wt{y}_2=\wt{e}-3\wt{f}+\eta^{xy22}_{a}a+\eta^{xy22}_{b}$.

\medskip
Taking
$a_i=\wt{x}_1,\,a_j=\wt{y}_1,\, a_k=\wt{v}_1$ and $a_t=\wt{v}_2$, 
in \eqref{idsupjordan},  we obtain
\begin{equation}
((\wt{x}_1\cdot  \wt{y}_1)\cdot \wt{v}_1)\cdot \wt{v}_2+\wt{x}_1\cdot \wt{y}_1= 
2\wt{e}\cdot (\wt{x}_1\cdot \wt{y}_1)+\wt{x}_2\cdot \wt{y}_2.
\end{equation}
From the above equality, it is easy to see that 
$\eta^{xy11}_a=\eta^{xy22}_a$ and 
$\eta^{xy11}_b=\eta^{xy22}_b$. 
Thus we have that 
$\wt{x}_1\cdot \wt{y}_1=\wt{x}_2\cdot \wt{y}_2$.

Let 
 $a_i=\wt{x}_1$, $a_j=\wt{x}_2$ and   $a_k=a_t=wt{y}_1$ in (\ref{idsupjordan}), 
hence
 \begin{equation*}\label{igualdad14}
 \begin{aligned}
0=& ((\wt{x}_1\cdot \wt{x}_2)\cdot \wt{y}_1)\cdot \wt{y}_1-((\wt{x}_1\cdot \wt{y}_1)\cdot \wt{y}_1)\cdot \wt{x}_2+
((\wt{x}_2\cdot \wt{y}_1)\cdot \wt{y}_1)\cdot \wt{x}_1\\
=& ((\wt{v}_1+\alpha u_1)\cdot \wt{y}_1)\cdot \wt{y}_1-((\wt{e}-3\wt{f}+\eta^{xy11}_aa+\eta^{xy11}_bb)\cdot \wt{y}_1)\cdot \wt{x}_2 +
((\wt{v}_4+\alpha u_4)\cdot \wt{y}_1)\cdot \wt{y}_1 \\
=&( \wt{y}_1\cdot \wt{v}_1+\alpha \wt{y}_1\cdot u_1)\cdot \wt{y}_1-(-\wt{y}_1+\frac{1}{2}(\eta^{xy11}_a+\eta^{xy11}_b)n_1)\cdot \wt{x}_2 \\
=& \wt{x}_2\cdot \wt{y}_1+\alpha m_2\cdot \wt{y}_1-\wt{x}_2\cdot \wt{y}_1+\frac{1}{2}(\eta^{xy11}_a+\eta^{xy11}_b)\wt{x}_2\cdot n_1
= (\alpha +\frac{1}{2}(\eta^{xy11}_a+\eta^{pq11}_b))u_4, 
\end{aligned}
 \end{equation*}
thus,
$2\alpha=-(\eta^{xy11}_a+\eta^{xy11}_b)$.

\medskip
If we take 
$a_i=\wt{x}_1$, $a_j=\wt{y}_1$, $a_k=\wt{x}_2$ and $a_t=\wt{y}_2$ in \eqref{idsupjordan}, 
we obtain 
$\eta^{xy11}_a=\alpha$ and $\eta^{xy11}_b=-3\alpha$, 
therefore
$\wt{x}_1\cdot \wt{y}_1=\wt{x}_2\cdot \wt{y}_2=\wt{e}-3\wt{f}+\alpha a-3\alpha b$. 
 \endproof

 \begin{lemma}\label{lemasolkac} 
There exists $\beta\in\corf$ such that
$ \wt{x}_i=x_i+\beta\, m_i,$ and $\wt{y}_i=y_i+\beta\, n_i.$
for $i=1,2$.
 \end{lemma}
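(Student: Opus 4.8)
The plan is to produce the scalar $\beta$ explicitly as $\beta = \alpha/2$, where $\alpha$ is the scalar furnished by Lemma~\ref{forkac}, and to verify that the corrected odd elements $x_i := \wt{x}_i - \beta m_i$ and $y_i := \wt{y}_i - \beta n_i$ together with $\algs_0$ span a subsuperalgebra isomorphic to $\kac$. The structural facts I would invoke are that $\radd=0$ (so any product of two radical elements vanishes), the convention $\wt{a}\cdot n = an$ relating lifts to the regular action, and the identification of $\rad$ with $\reg(\kac)$, under which $m_j \leftrightarrow x_j$, $n_j \leftrightarrow y_j$, $u_i \leftrightarrow v_i$, $a \leftrightarrow e$, $b \leftrightarrow f$, so that products involving the radical basis mirror \eqref{eqkac1}--\eqref{eqkac2}.

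First I would observe that the even-times-odd relations of Lemma~\ref{lemakaccero} are preserved for \emph{any} choice of $\beta$: since $\wt{x}_i$ obeys those relations with no radical part, and the radical generator $m_i$ obeys the very same relations under the regular action, the combination $x_i = \wt{x}_i - \beta m_i$ again satisfies them, and likewise for $y_i$. Hence the corrected elements automatically reproduce \eqref{kaclemma1parimpar}, and only the odd-times-odd products remain to be controlled.

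Next I would compute a typical odd-times-odd product. Expanding $x_1\cdot x_2 = (\wt{x}_1 - \beta m_1)\cdot(\wt{x}_2 - \beta m_2)$ and discarding the term $m_1\cdot m_2 = 0$ (by $\radd=0$), one is left with $\wt{x}_1\cdot\wt{x}_2 - \beta(\wt{x}_1\cdot m_2 + m_1\cdot\wt{x}_2)$. Using the convention $\wt{a}\cdot n = an$ and the regular action, both $\wt{x}_1\cdot m_2$ and $m_1\cdot\wt{x}_2$ equal the radical copy $u_1$ of the product $x_1\cdot x_2 = v_1$, so the bracket equals $2u_1$. Combined with $\wt{x}_1\cdot\wt{x}_2 = \wt{v}_1 + \alpha u_1$ from Lemma~\ref{forkac}, this gives $x_1\cdot x_2 = \wt{v}_1 + (\alpha - 2\beta)u_1$. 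The same computation applied to the remaining five products of Lemma~\ref{forkac} yields, in each case, the corresponding semisimple value plus $(\alpha - 2\beta)$ times a single radical basis vector.

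The decisive point — and the reason the statement asks for a \emph{single} $\beta$ — is that the coefficient $\alpha$ appearing in Lemma~\ref{forkac} is the \emph{same} across all six odd-times-odd products. Therefore the single choice $\beta = \alpha/2$ simultaneously annihilates every radical correction, placing all products $x_i\cdot x_j$, $x_i\cdot y_j$, $y_i\cdot y_j$ back inside $\algs_0$ with exactly the structure constants \eqref{eqkac2}. I would conclude that $\algs := \algs_0 + \sum_i \corfd x_i + \sum_i \corfd y_i$ is a subsuperalgebra isomorphic to $\kac$ and that $\alga = \algs\oplus\rad$. The only genuine obstacle is the uniform cancellation just described, and it is not in fact a difficulty here: the hard work of forcing a common $\alpha$ was already carried out in Lemma~\ref{forkac}, so what remains is the bookkeeping of the regular action and a one-line determination of $\beta$.
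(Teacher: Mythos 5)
Your computation is mathematically sound: $x_1\cdot x_2=\wt{x}_1\cdot\wt{x}_2-\beta(\wt{x}_1\cdot m_2+m_1\cdot\wt{x}_2)=\wt{v}_1+(\alpha-2\beta)u_1$ is correct under the convention $\wt{a}\cdot n=an$ and $\radd=0$, the other five products carry the same $\alpha$ precisely because of Lemma \ref{forkac}, and the even--odd relations are indeed insensitive to the correction since $m_i,n_i$ transform under $\algs_0$ exactly as $x_i,y_i$ do in $\reg(\kac)$. However, this is a genuinely different route from the paper's, and it really proves Theorem \ref{casokac} rather than Lemma \ref{lemasolkac}. In the paper the lemma is a rigidity statement about the given preimages: one writes $\wt{x}_i=x_i+\Lambda^i_x$, $\wt{y}_i=y_i+\Lambda^i_y$ with $\Lambda$'s in $\rad_1$, first checks that $\Lambda^i_x$ is a multiple of $m_i$ alone and $\Lambda^i_y$ of $n_i$ alone, and then uses the relations $\wt{x}_1\cdot\wt{v}_2=-\wt{y}_2$, $\wt{x}_1\cdot\wt{v}_4=\wt{x}_2$, $\wt{y}_2\cdot\wt{v}_4=\wt{y}_1$ from Lemma \ref{lemakaccero} to force the four scalars to coincide; Lemma \ref{forkac} and the value of $\alpha$ play no role there, and $\beta$ is determined by the data rather than chosen --- the equation $2\beta=\alpha$ is only imposed afterwards, in the proof of Theorem \ref{casokac}. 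Your version collapses the two steps: by defining $x_i:=\wt{x}_i-\frac{\alpha}{2}m_i$ the decomposition $\wt{x}_i=x_i+\beta m_i$ holds by fiat, and what you actually verify is that the corrected elements reproduce the full multiplication table \eqref{eqkac1}--\eqref{eqkac2}. That buys the whole theorem in one pass and is arguably cleaner; but if the lemma is meant (as the paper uses it) as a normalization of \emph{arbitrary} preimages consistent with \eqref{kaclemma1parimpar}, your argument never rules out components of the radical part of $\wt{x}_1$ along $m_2$ or the $n_j$, so the uniformity argument via Lemma \ref{lemakaccero} would still need to be supplied.
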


 \proof 
 Assume  that there exist 
 $\Lambda^{i}_x$ and $\Lambda^{i}_y\in\rad_1$ 
 such that 
 $\wt{x}_i=x_i+\Lambda^{i}_x$ and $\wt{y}_i=y_i+\Lambda^{i}_y$, 
 where 
 $\Lambda^{i}_t=\lambda^{t_i}_{m_1}m_1+\lambda^{t_i}_{m_2}m_2+\lambda^{t_i}_{n_1}n_1+\lambda^{t_i}_{n_2}n_2$
 and  
 $\lambda^{t_i}_{m_1}$, $\lambda^{t_i}_{m_2}$, $\lambda^{t_i}_{n_1}$ and $\lambda^{t_i}_{n_2}\in\corf$. It is easy to see that 
 $\wt{x}_i=x_i+\lambda^{x_i}_{m_i}\,m_i$
   and 
   $\wt{y}_i=y_i+\lambda^{y_i}_{n_i}\,n_i$.
 
\noindent 
Using the Lemma \ref{lemakaccero}, we have that 
 $\wt{x}_1\cdot \wt{v}_2=-\wt{y}_2$, 
 $\wt{x}_1\cdot \wt{v}_4=-\wt{x}_2$ and 
 $\wt{y}_2\cdot \wt{v}_4=\wt{y}_1$.
Thus one easily verifies that
  $\lambda^{x_1}_{m_1}=\lambda^{y_2}_{n_2}$, 
  $\lambda^{x_1}_{m_1}=\lambda^{x_2}_{m_2}$
  and  
  $\lambda^{y_2}_{n_2}=\lambda^{y_1}_{n_1}$.
  Therefore,
  $\lambda^{y_1}_{n_1}=\lambda^{y_2}_{n_2}=\lambda^{x_1}_{m_1}=\lambda^{x_2}_{m_2}.$ 
 \endproof

Let us prove the following theorem
\begin{theorem}\label{casokac}
Let 
$\alga$ 
be a finite dimensional Jordan superalgebra with solvable radical 
$\rad$ such that $\radd=0$ 
and 
$\alga/\rad\cong\kac$. 
Then there exists a subsuperalgebra 
$\algs\subseteq\alga$ 
such that 
$\algs\cong\kac$ and $\alga=\algs\oplus\rad$.
\end{theorem}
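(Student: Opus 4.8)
The plan is to assemble the three preceding lemmas into one explicit complement. By Theorem~\ref{teoredu} it suffices to treat the case where $\rad$ is an irreducible $\kac$-superbimodule, and I would work in the situation fixed at the start of this section, where $\rad$ is identified with the regular superbimodule $\reg(\kac)$ with basis $a,b,u_1,\dots,u_4,m_1,m_2,n_1,n_2$ matching $e,f,v_i,x_j,y_j$. The even part is already under control: by the section's setup ($\wt{b}_i\cdot\wt{b}_j=\widetilde{b_ib_j}$, which is just the WPT for the semisimple Jordan algebra $(\kac)_0$), the subspace $\algs_0=\corfd\wt{e}+\sum_i\corfd\wt{v}_i\oplus\corfd\wt{f}$ is a subalgebra isomorphic to $(\kac)_0$. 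So the whole problem is to adjust the odd lifts $\wt{x}_j,\wt{y}_j$ by radical elements so that the resulting odd space, together with $\algs_0$, closes exactly as in $\kac$.

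First I would record what the lemmas give for the chosen lifts. Lemma~\ref{lemakaccero} shows that every even$\times$odd product \eqref{parimparkac} lifts with no radical correction (e.g. $\wt{e}\cdot\wt{x}_j=\tfrac12\wt{x}_j$, $\wt{y}_1\cdot\wt{v}_1=\wt{x}_2$). Lemma~\ref{forkac} shows that every odd$\times$odd product differs from its $\kac$-value by one common scalar $\alpha$, e.g. $\wt{x}_1\cdot\wt{x}_2=\wt{v}_1+\alpha u_1$ and $\wt{x}_1\cdot\wt{y}_1=\wt{e}-3\wt{f}+\alpha a-3\alpha b$. Finally Lemma~\ref{lemasolkac} pins the radical part of the lifts themselves: $\wt{x}_i=x_i+\beta m_i$, $\wt{y}_i=y_i+\beta n_i$ for a single $\beta$. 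Thus the entire multiplication table of $\alga$ is determined up to $\alpha$ and $\beta$, and the obstruction to splitting is concentrated in $\alpha$.

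The decisive step is a single change of variables. I would set $\hat{x}_i=\wt{x}_i-\tfrac{\alpha}{2}m_i$ and $\hat{y}_i=\wt{y}_i-\tfrac{\alpha}{2}n_i$; these still project onto $\bar{x}_i,\bar{y}_i$, so $\algs:=\algs_0\oplus\big(\sum_i\corfd\hat{x}_i+\sum_i\corfd\hat{y}_i\big)$ is a complement of $\rad$. Since $m_i,n_i$ transform under $\algs_0$ exactly as $x_i,y_i$ do in the regular bimodule, the even$\times$odd relations survive verbatim (e.g. $\wt{e}\cdot\hat{x}_i=\tfrac12\hat{x}_i$, $\hat{x}_1\cdot\wt{v}_2=-\hat{y}_2$). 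For the odd$\times$odd products the cross terms $\wt{x}_1\cdot m_2$ and $m_1\cdot\wt{x}_2$ each contribute $u_1$ (because $x_1\cdot x_2=v_1\leftrightarrow u_1$), while $\radd=0$ kills $m_1\cdot m_2$; hence $\hat{x}_1\cdot\hat{x}_2=\wt{v}_1+(\alpha-\alpha)u_1=\wt{v}_1$, and the same cancellation occurs in each of the six families of Lemma~\ref{forkac} (for instance $\hat{x}_1\cdot\hat{y}_1=\wt{e}-3\wt{f}$ after the shift by $2\cdot(-\tfrac{\alpha}{2})(a-3b)$). Therefore $\algs$ closes with precisely the products \eqref{eqkac1}, \eqref{parimparkac} and \eqref{eqkac2}, giving $\algs\cong\kac$ and $\alga=\algs\oplus\rad$.

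The point I expect to demand the most care — and the reason the lemmas are organized as they are — is the verification that one and the same scalar $\tfrac{\alpha}{2}$ simultaneously trivializes all six odd$\times$odd families; this is exactly the force of Lemma~\ref{forkac}, which ties every correction to the single $\alpha$. A secondary check is the persistence of the even$\times$odd relations after the shift, which reduces to the regular-bimodule actions $\wt{e}\cdot m_i=\tfrac12 m_i$, $m_1\cdot\wt{v}_2=-n_2$, etc. Once these are confirmed, the isomorphism with $\kac$ is simply the identity on the displayed basis.
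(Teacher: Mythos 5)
Your proposal is correct and takes essentially the same route as the paper: after reducing via Theorem~\ref{teoredu} and Shtern's classification to $\rad\cong\reg(\kac)$, the paper likewise combines Lemmas~\ref{lemakaccero}, \ref{forkac} and \ref{lemasolkac} and observes that the odd lifts may be corrected by $\beta m_i$, $\beta n_i$ with $2\beta=\alpha$, which is precisely your shift by $\tfrac{\alpha}{2}$ and the same cross-term cancellation. There is no substantive difference between the two arguments.
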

\proof
Recall that A. S. Shtern \cite{Sht1} 
proved that any irreducible Jordan superbimodule over $\kac$ is isomorphic to $\reg(\kac)$.
ByTheorem \ref{teoredu}, 
we only need to consider this case.
 
By Lemma \ref{forkac}, 
we can assume that there exists 
$\alpha\in\rad$ such that 
\begin{equation}\label{kacecuacionc}\begin{aligned}
\textnormal{\bf{(i)} } & \wt{x}_1\cdot  \wt{x}_2=\wt{u}_1+\alpha u_1, & 
\textnormal{\bf{(ii)} } & \wt{y}_1\cdot \wt{y}_2=\wt{y}_2+\alpha u_2 \\
\textnormal{\bf{(iii)} }&  \wt{x}_1\cdot \wt{y}_2=\wt{y}_3+\alpha u_3, & 
\textnormal{\bf{(iv)} } &  \wt{x}_2\cdot \wt{y}_1= \wt{y}_4+\alpha u_4\\
\textnormal{\bf{(v)} } &  \wt{x}_1\cdot \wt{y}_1=\wt{e}-3\wt{f}+\alpha a-3\alpha b &
\textnormal{\bf{(vi)} } &  \wt{x}_2\cdot \wt{y}_2=\wt{e}-3\wt{f}+\alpha a-3\alpha b 
\end{aligned}\end{equation}

By Lemma \eqref{lemasolkac}, there is a
$\beta\in\corf$ such that 
 $\wt{x}_i=x_i+\beta\, m_i$ and 
 $\wt{y}_i=y_i+\beta\, n_i$.

It is easy to verifies the following equalities
\begin{equation}\label{kacecua1}
 \begin{aligned}
 \wt{x}_1\cdot \wt{y}_1  =  x_1\cdot y_1+2\beta (a-3b), & & 
 \wt{x}_2\cdot \wt{y}_2  =  x_2\cdot y_2+2\beta(a-3b), \\
 \wt{x}_1\cdot \wt{x}_2  = x_1\cdot x_2+2\beta  u_1,&  & 
 \wt{y}_1\cdot \wt{y}_2  = y_1\cdot y_2+2\beta  u_2,\\
 \wt{x}_1\cdot \wt{y}_2  =  x_1\cdot y_2+2\beta u_3,& & 
 \wt{x}_2\cdot \wt{y}_1  = x_2\cdot y_2+2\beta  u_3.
 \end{aligned}
 \end{equation}

Using \eqref{kacecuacionc} and \eqref{kacecua1},  
we get 
$ \wt{x}_i\cdot \wt{y}_i =\wt{e}-3\wt{f}$, $\wt{x}_1\cdot\wt{x}_2=\wt{v}_1$,  
$\wt{x}_1\cdot\wt{y}_2=\wt{v}_3$, $\wt{x}_2\cdot\wt{y}_1=\wt{v}_4$ 
and  
$\wt{y}_1\cdot\wt{y}_2=\wt{v}_2$ if and only if, $2\beta=\alpha$.
This equality has always a solution. Therefore the WPT holds in the case under consideration.
\endproof


\section{Jordan superalgebra of superform.}
In this section we use the classification of irreducible $\jor$-bimodules obtained by E. Zelmanov and C. Martinez in \cite{zelmar2}, where 
$\jor=\jor(\mathcal{V},f)=(\corfd 1\oplus \mathcal{V}_0)\dotplus \mathcal{V}_1$
be a Jordan superalgebra of  nondegenerate super-symmetric superform
$f$
on a superspace
$\mathcal{V}$.

\medskip
We may assume that
$\dim \mathcal{V}_1>1$.
Let
$v_1,\ldots,v_n$
be an
$f$-orthonormal basis of $\mathcal{V}_0$,
i.e.
$f(v_i,v_i)=1, f(v_i,v_j)=0$ for $i\neq j, \quad i,j=1\ldots,n$.
Let
$w_1,\ldots,w_{2m}$
be a basis of
$\mathcal{V}_1$
such that
$f(w_{2p-1}, w_{2p})=1, \,1\leq p\leq m$,
and all the other products of basis elements are zero.

We know that all products
$v_1^{i_1}\cdots v_n^{i_n}w_1^{k_1}\cdots w_{2m}^{k_{2m}}$
form a basis of
$\algc$,
where
$ i_1,\ldots$, $i_n \in\{0,1\} $
and
$k_1, \ldots, k_{2m}$
are nonnegative integers and
$\algc$
denotes the Clifford superalgebra of
$\mathcal{V}$.
Let
$\algc_r$
be the subspace in
$\algc$
spanned by the  products of basis elements 
of length at most
$r$,
and let
$\jor=(\corfd 1 +\mathcal{V}_0)\dotplus \mathcal{V}_1$
be the Jordan superalgebra of superform
$f$.
Let
$a$
be an even vector,
$\mathcal{V}^{\prime}=\mathcal{V}\oplus\corfd a$.
We extend the superform
$f$
to
$\mathcal{V}^{\prime}$
so that
$f(a,a)=1, \, f(a,\mathcal{V})=0$.
Denote by 
$\algc_r^{\prime}$
the subspace in
$\algc^{\prime}$ defined in the same way as $\algc_r$ in $\algc$.

In this section, for every element
$v_1^{i_1}\cdots v_n^{i_n}w_1^{k_1}\cdots w_{2m}^{k_{2m}}$
of the basis of
$\algc$,
we put into correspondence 
a pair
$(I,K)$,
where
$I=(i_1,\ldots,i_n)$
is a
$n$-tuple and
$K=(k_1,\ldots,k_{2m})$
is a
$2m$-tuple where
$i_s,\, k_t$
satisfies the above conditions.
We write
$\eta_{I,K}=v_1^{i_1}\cdots v_n^{i_n}w_1^{k_1}\cdots w_{2m}^{k_{2m}}=V_IW_K$.
Note that for any pair of elements
$\eta_{I,K}, \eta_{I^\prime, K^\prime}\in \algc$,
the following relation holds
$\eta_{I,K}=\eta_{I^\prime, K^\prime}$ if and only if 
$I=I^\prime, \textnormal{ and } K=K^\prime.$
Thus every  element of the basis of
$\algc$
has a unique representation in terms of
$(I,K)$.
We denote
$V_{(0)}=1,\,V_{(1)}=v_1v_2\cdots v_n$.

Let
$\mathcal{I},$ $ \mathcal{K}$
be the following sets
\begin{align*}
\mathcal{I}&=\{I=(i_1,\ldots,i_n),  i_j=0 \textnormal{ or } 1,\, j=1,\ldots,n\,\},\\
\mathcal{K}&=\{{K}=(k_1,\ldots,k_{2m}),  k_j\in\mathbb{Z}^+\cup\{0\},\, j=1,\ldots,2m\,\},
\end{align*}
For
${I}\in \mathcal {I}$,
${K}\in \mathcal{K}$,
we denote
$|{I}|=i_1+\cdots+i_n$,
$|{K}|=k_1+\cdots+k_{2m}$ and
$|\eta_{I,K}|=|I|+|K|$.

\medskip
\noindent\textbf{Some relations in $\algc\am$.}

\medskip
From symmetric product in superalgebra $\algc^{+}$ we have 
\begin{align}
V_IW_K\circ v_j = \Big(-\frac{1}{2}\Big)^{i_1+\cdots +i_{j-1}}V_{(i_1,\ldots,i_j+1,\ldots, i_n)}W_{K}(1+(-1)^{|\eta_{I,K}|-i_j}),\label{leymulgen} 
\end{align}
\begin{equation}\label{generalwimpar}
\begin{aligned}
V_IW_{K}\circ w_p =\frac{1}{2}
V_IW_{(k_1,\ldots,k_{p}+1,\ldots,k_{2m})}(1&+(-1)^{|\eta_{I,K}|}) -\\ & k_{p+1}V_IW_{(k_1,\ldots,k_{p+1}-1,\ldots,k_{2m})}
\end{aligned}
\end{equation}
for $j=1,\ldots, n$ and 
$p=1,3,\ldots, 2m-1$.
We  note that a similar relation to  \eqref{generalwimpar} with some change of signs holds for even
$p$.

In this section,  $\alga_0=(\algs_0\oplus\rad_0)$ and $(\algs_1/\rad_1)\cong\jor_1$. Assume that  
  $\algs_0= \corfd1+\corfd \wt{v}_1+\cdots+ \corfd \wt{v}_n$, $\jor_0\cong\algs_0$ and 
  $\alga_1/\rad_1= \corfd \bar{w}_1+\corfd\bar{w}_2+\cdots+\corfd\bar{w}_{2m}$. 
We consider two cases for $\rad$.

\subsection{$\rad$ is isomorphic to $\algc_r/\algc_{r-2}$.}

Without loss of generality, we can take  
\begin{align*}
\rad_0&= \textnormal{{\bf{\vect}}}_\corf\,\langle\, \eta_{I,K}, |\eta_{I,K}|=r,r-1  \textnormal{ and } |K| \textnormal{ is even } \rangle,  \\ 
\rad_1&= \textnormal{{\bf{\vect}}}_\corf\,\langle\,  \eta_{I,K}, |\eta_{I,K}|=r,r-1 \textnormal{ and } |K| \textnormal{ is odd } \rangle.
 \end{align*}
Using the notation introduced above,  due to the equations 
\eqref{leymulgen} and \eqref{generalwimpar},  
we have the following products:
\begin{align}\label{accionpar}
\eta_{I,K}\cdot \wv_j=\left\{
\begin{array}{lll}
\pm V_{(i_1,\ldots,i_{j-1},0,i_{j+1}, \ldots,i_n)}W_K 
& \textnormal { if }  & |\eta_{I,K}|=r,\, i_j=1,\\
\pm V_{(i_1,\ldots,i_{j-1},1,i_{j+1}, \ldots,i_n)}W_K
&\textnormal { if } 
& |\eta_{I,K}|=r-1,\, i_j=0.
\end{array}\right.\\
\label{accionimpar}
\eta_{I,K}\cdot \ww_p=\left\{
\begin{array}{lll}
\pm k_{p\pm 1}V_IW_{(k_1,\ldots,k_{p\pm 1}-1,\ldots,k_{2m})} 
& \textnormal { if } & |\eta_{I,K}|=r, \\ 
V_IW_{(k_1,\ldots,k_{p}+1,\ldots,k_{2m})} 
&\textnormal { if }& |\eta_{I,K}|=r-1. \end{array}\right.
\end{align}

\noindent 
Firts, we prove three lemmas.
\begin{lemma}\label{Afirmacion1} 
$\wv_j\cdot \ww_s=0$.
\end{lemma}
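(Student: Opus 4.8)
The plan is to compute the product $\wv_j \cdot \ww_s$ directly and show that it lands in the radical's \emph{complement} within the structure, i.e. that the correction terms force it to vanish. Recall from the setup that $\wv_j \in \algs_0$ is the chosen lift of $v_j$ with $\wv_i \cdot \wv_j = \widetilde{v_i v_j}$, while $\ww_s \in \alga_1$ is a lift of $\bar{w}_s$. Since $\wv_j$ and $\ww_s$ are both homogeneous, the product $\wv_j \cdot \ww_s$ is an odd element of $\alga_1$, hence decomposes as a piece congruent to $v_j \cdot w_s$ modulo $\rad_1$ plus a radical correction. But in the Jordan superalgebra of superform, the defining multiplication gives $v_j \cdot w_s = f(v_j, w_s)\cdot 1 = 0$, because $f(\mathcal{V}_0, \mathcal{V}_1) = 0$. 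Therefore $\wv_j \cdot \ww_s$ lies entirely in $\rad_1$, and the task reduces to showing this radical part is zero.

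The key step is to exploit the action formulas \eqref{accionpar} and \eqref{accionimpar} together with the linearized Jordan superidentity \eqref{idsupjordan}. First I would write $\wv_j \cdot \ww_s = \Lambda \in \rad_1$ for some radical element $\Lambda = \sum_{I,K} c_{I,K}\,\eta_{I,K}$ with $|\eta_{I,K}| = r,\,r-1$ and $|K|$ odd. Then I would substitute suitable choices of homogeneous arguments into \eqref{idsupjordan} to pin down the coefficients $c_{I,K}$. The most natural substitution is $a_i = \ww_s$, $a_j = \wv_j$, $a_k = a_l = \wv_j$ (or a second even generator), using $\wv_j^2 = \widetilde{v_j^2} = 1$ (the identity element, which acts trivially). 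Because $\algs_0$ is a genuine subalgebra isomorphic to $\jor_0$, all products among the $\wv$'s are exact with no radical tail, so the superidentity collapses to a linear relation on $\Lambda$ alone. Applying $\wv_j$ repeatedly and comparing with the eigenvalue behavior of the Peirce-type action should yield a scalar equation of the form $(\text{nonzero constant})\cdot \Lambda = 0$, exactly as in the computation of $\Lambda^{x_1}_e$ in Lemma \ref{lemakaccero}.

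Concretely, I expect the following mechanism. Acting on $\Lambda$ by $\wv_j$ via \eqref{accionpar} either raises or lowers the index $i_j$ while preserving $|K|$; iterating the action and using the idempotent/half-idempotent relations of the superform algebra produces two distinct scalar multiples of $\Lambda$ that must agree. Since $r \neq r-1$ these two length-strata scale differently, forcing every coefficient $c_{I,K}$ to vanish. The main obstacle will be bookkeeping the signs in \eqref{accionpar} and \eqref{accionimpar}: the factors $(-\tfrac{1}{2})^{i_1+\cdots+i_{j-1}}$ and the parity-dependent signs $(1+(-1)^{|\eta_{I,K}|})$ must be tracked carefully across each application, and one must verify that the raising and lowering operators genuinely separate the $r$ and $r-1$ components rather than mixing them back together. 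Once that separation is established, the vanishing $\wv_j \cdot \ww_s = 0$ follows by linear independence of the $\eta_{I,K}$, completing the proof.
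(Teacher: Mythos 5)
Your setup is right: since $f(\mathcal{V}_0,\mathcal{V}_1)=0$ the product $\wv_j\cdot\ww_s$ lies entirely in $\rad_1$, and the paper likewise writes it as a combination $\sum\xi^{j}_{(I,K)}\eta_{I,K}$ and attacks the coefficients with the superidentity \eqref{idsupjordan}. But the mechanism you propose for killing this radical part --- a single substitution $a_i=\ww_s$, $a_j=a_k=a_l=\wv_j$ producing ``a scalar equation of the form $(\text{nonzero constant})\cdot\Lambda=0$, exactly as in the computation of $\Lambda^{x_1}_e$'' --- does not work here, and this is a genuine gap. In the Kac case the element $\wt e$ is an idempotent acting on the whole odd part of the radical by the single scalar $\tfrac12$, so the identity collapses to $\tfrac52\Lambda=3\Lambda$ and $\Lambda=0$. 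Here $\wv_j$ satisfies $\wv_j^{\,2}=1$ and acts on $\rad$ as a raising/lowering operator in the index $i_j$; by \eqref{accionpar} the operator $\mro_{\wv_j}^2$ restricted to $\rad$ is a projection with eigenvalues $0$ and $1$ (it is the identity on $\eta_{I,K}$ with $|\eta_{I,K}|=r-1$, $i_j=0$, or with $|\eta_{I,K}|=r$, $i_j=1$, and zero otherwise). Your substitution yields only $((\ww_s\cdot\wv_j)\cdot\wv_j)\cdot\wv_j=\ww_s\cdot\wv_j$ (the paper's \eqref{auxzuuu}), which annihilates the components lying in the $0$-eigenspace of $\mro_{\wv_j}^2$ but imposes no condition on the components in the $1$-eigenspace; moreover both length strata $r$ and $r-1$ meet both eigenspaces, so the separation of strata by different scaling that you are counting on does not occur.

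The missing idea is the systematic use of mixed indices. The paper first takes $a_i=\ww_s$, $a_j=a_l=\wv_i$, $a_k=\wv_j$ with $j\neq k$, obtaining $((\ww_s\cdot \wv_i)\cdot \wv_j)\cdot \wv_i=0$; ranging over all $j\neq k$ this forces the surviving terms of $\wv_k\cdot\ww_s$ to contain every $v_i$ with $i\neq k$ (or none of them). The single-index identity above then removes the length-$r$ part, and finally the two-index relation $((\ww_s\cdot\wv_k)\cdot \wv_j)\cdot\wv_j+((\ww_s\cdot\wv_j)\cdot\wv_j)\cdot \wv_k=\ww_s\cdot\wv_k$, again with $j\neq k$, kills what remains. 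Your parenthetical ``(or a second even generator)'' gestures in this direction, but without the mixed-index identities the argument cannot close: no single linear relation involving only one $\wv_j$ forces all coefficients to vanish.
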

\begin{proof} 
Setting $a_i=\ww_s$, $a_j=a_l=\wv_i$ and  $a_k=\wv_j$ in \eqref{idsupjordan}, we have
\begin{equation}\label{auxsupfo2}
((\ww_s\cdot \wv_i)\cdot \wv_j)\cdot \wv_i=0.
\end{equation}

we may assume that there exist some scalars 
$\xi_{(I,K)}^k$ 
such that
$$
\displaystyle{\wv_k\cdot \ww_s=\sum_{\substack{{I,\,K}\\{|\eta_{I,K}|=r }\\ |K| \textnormal{ odd}}}
\xi^{k}_{(I,K)}V_IW_K+\sum_{\substack{{I,\,K}\\{|\eta_{I,K}|=r-1}\\ |K| \textnormal{ odd}}}
\xi^{k}_{(I,K)}V_IW_K}.
$$

\medskip Let 
$\xi^{k}_{(I,k)}\eta_{I,K}\in\rad_1$ 
be a nonzero element and 
$j\neq k$, 
$j, k\in\{1,2,\ldots, n\}$.
Using \eqref{accionpar} and \eqref{auxsupfo2}, we obtain the following relations:

\noindent {\bf{(a)}} If $|\eta_{I,K}|=r-1$ and $i_j=1$, 
then
$(\xi_{(I,K)}^{k}\eta_{I,K}\cdot \wv_j)\cdot \wv_k=0$.\\
\noindent {\bf{(b)}} If $|\eta_{I,K}|=r-1$, $i_j=i_k=0$, 
then 
$(\xi_{(I,K)}^{k}\eta_{I,K}\cdot \wv_j)\cdot \wv_k=0$.  \\
\noindent {\bf{(c)}} If $|\eta_{I,K}|=r$, $i_j=0$,
then 
$(\xi_{(I,K)}^{k}\eta_{I,K}\cdot \wv_j)\cdot \wv_k=0$.  \\ 
\noindent {\bf{(d)}} If $|\eta_{I,K}|=r$, $i_j=i_k=1$, 
then
$(\xi_{(I,K)}^{k}\eta_{I,K}\cdot \wv_j)\cdot \wv_k=0$.  

\medskip From {\bf(a)} - {\bf{(d)}} and \eqref{auxsupfo2},  we have
\begin{align*}
\wv_k\cdot \ww_s=
\sum_{\substack{{I,\,K}\\{|\eta_{I,K}|=r-1}\\ |K| \textnormal{ odd}}} 
& 
\xi^{k}_{(I,K)} V_{(i_1,\ldots,i_{j-1},0,i_{j+1}, \ldots, i_n)}W_K 
+\sum_{\substack{{I,\,K}\\{|\eta_{I,K}|=r }\\ |K| \textnormal{ odd}}}
 \xi^{k}_{(I,K)}  V_{(i_1,\ldots,i_{j-1},1,i_{j+1}, \ldots, i_n)}W_K +
 \nonumber\\
&
\sum_{\substack{{I,\,K}\\{|\eta_{I,K}|=r-1}\\ |K| \textnormal{ odd}}} 
\xi^{k}_{(I,K)} V_{(i_1,\ldots,i_{j-1},0,i_{j+1}, \ldots, i_{k-1},0,i_{k+1}, \ldots,i_n)}W_K +
\\ 
&
\sum_{\substack{{I,\,K}\\{|\eta_{I,K}|=r }\\ |K| \textnormal{ odd}}} 
\xi^{k}_{(I,K)} V_{(i_1,\ldots,i_{j-1},0,i_{j+1}, \ldots, i_{k-1},0,i_{k+1}, \ldots,i_n)}W_K 
=0.
\end{align*} 
Thus,
\begin{align*}
\wv_k&\cdot \ww_s=
\sum_{\substack{{I,\,K}\\{|\eta_{I,K}|=r-1}\\ |K| \textnormal{ odd}}} 
\xi^{k}_{(I,K)}v_1^{i_1}\cdots v_j\cdots v_n^{i_n}W_K
+\sum_{\substack{{I,\,K}\\{|\eta_{I,K}|=r-1}\\ |K| \textnormal{ odd}}} \xi^{k}_{(I,K)}v_1^{i_1}\cdots v_{j}\cdots   v_{k}\cdots  v_n^{i_n}W_K+\nonumber\\
& \sum_{\substack{{I,\,K}\\{|\eta_{I,K}|=r }\\ |K| \textnormal{ odd}}} \xi^{k}_{(I,K)}v_1^{i_1}\cdots v_{j-1}^{i_{j-1}}v_{j+1}^{i_{j+1}}\cdots v_n^{i_n} W_K
+\sum_{\substack{{I,\,K}\\{|\eta_{I,K}|=r }\\ |K| \textnormal{ odd}}} \xi^{k}_{(I,K)}v_1^{i_1}\cdots v_{j}\cdots   v_{k}\cdots v_n^{i_n}W_K=0.
\end{align*}

\noindent
 If we apply \eqref{auxsupfo2} to the obtained above equation  for all $j\neq k$,
  we get
 \begin{equation}\label{ukzp0}
 \wv_k\cdot \ww_s= \sum_{\substack{{I,\,K}\\{|\eta_{I,K}|=r-1}\\ |K| \textnormal{ odd}}} \xi^{k}_{(I,K)}v_1\cdots  v_k^{i_{k}}\cdots v_nW_K+ \sum_{\substack{{K}\\{|K|=r }\\ |K| \textnormal{ odd}}} \xi^{k}_{(0,K)} W_K
 +\sum_{\substack{{K}\\{|K|=r-n }\\ |K| \textnormal{ odd}}} \xi^{k}_{(1,K)}V_{(1)}W_K.
 \end{equation}

\medskip Substituting
$a_i$ by $\ww_s$ and $a_j=a_k=a_l$ by $\wv_k$ respectively in \eqref{idsupjordan},  we obtain 
\begin{align}\label{auxzuuu}
 ((\ww_s\cdot \wv_k)\cdot \wv_k)\cdot \wv_k=z_s\cdot \wv_k.
\end{align}

Applying \eqref{auxzuuu}  to \eqref{ukzp0}, to get
\begin{align}\label{ukzp1}
\wv_k\cdot \ww_s=&\sum_{\substack{{I,\,K}\\{|I|+|K|=r-1}\\ |K| \textnormal{ odd}}} \xi^{k}_{(I,K)}v_1\cdots v_k^{i_{k}}\cdots v_nW_K.
 \end{align}
Substituting 
$a_i\,,a_j,\,a_k$, and $a_l$ by $\ww_s,\,\wv_k,\, \wv_j$ and $\wv_j$  
respectively in \eqref{idsupjordan},  
we have
\begin{align*}
((\ww_s\cdot\wv_k)\cdot \wv_j)\cdot\wv_j+((\ww_s\cdot\wv_j)\cdot\wv_j)\cdot \wv_k=\ww_s\cdot\wv_k
\end{align*}
 Applying the obtained equality to \eqref{ukzp1}, we have
$\wv_k\cdot \ww_s=0$. 
\end{proof}

\begin{lemma}\label{lemaformaimpar}

\begin{align*} 
\ww_p\cdot \ww_q=\alpha_0^{p,q} +\sum_{\substack{{K}\\{|K|=r-1}}}
\alpha^{p,q}_{(0,K)}W_K +
\sum_{\substack{{K}\\{|K|=r-n}\\ n \textnormal{ odd}}}
\alpha^{p,q}_{(1,K)}V_{(1)}W_K, \textnormal{ where } \alpha_0^{p,q}\in\{0,1\}
\end{align*}

\end{lemma}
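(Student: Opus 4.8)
The plan is to separate $\ww_p\cdot\ww_q$ into its semisimple and radical parts and then pin down the support of the radical part by feeding the product into the linearized superidentity \eqref{idsupjordan} together with the orthonormality relations $\wv_s\cdot\wv_t=\delta_{st}\cdot 1$ holding in $\algs_0$. Since $\alga/\rad\cong\jor$ and in $\jor$ one has $w_p\cdot w_q=f(w_p,w_q)\cdot 1$, I would first write $\ww_p\cdot\ww_q=\alpha_0^{p,q}\cdot 1+R$, where $\alpha_0^{p,q}=f(w_p,w_q)\in\{0,1\}$ and $R\in\rad_0$; the only $\algs_0$-component is the multiple of the unit because the image $f(w_p,w_q)\bar 1$ carries no $\bar v_i$. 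It then remains to determine which basis elements $V_IW_K$ can occur in $R$.

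First I would substitute $a_i=\ww_p$, $a_j=\ww_q$, $a_k=\wv_s$, $a_l=\wv_t$ into \eqref{idsupjordan}. By Lemma \ref{Afirmacion1} every mixed product $\ww\cdot\wv$ vanishes, so four of the six terms drop out and the superidentity collapses to $((\ww_p\cdot\ww_q)\cdot\wv_s)\cdot\wv_t=(\ww_p\cdot\ww_q)\cdot(\wv_s\cdot\wv_t)$. Using $\wv_s\cdot\wv_t=\delta_{st}\cdot 1$ this yields the two relations $((\ww_p\cdot\ww_q)\cdot\wv_s)\cdot\wv_s=\ww_p\cdot\ww_q$ and $((\ww_p\cdot\ww_q)\cdot\wv_s)\cdot\wv_t=0$ for $s\neq t$. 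Subtracting the unit contribution, which is fixed by applying $\cdot\wv_s$ twice, turns the first relation into $(R\cdot\wv_s)\cdot\wv_s=R$ for every $s$.

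The heart of the argument is to read off the consequences of $(R\cdot\wv_s)\cdot\wv_s=R$ from the explicit action \eqref{accionpar}. Writing $P_s$ for the operator $\eta\mapsto(\eta\cdot\wv_s)\cdot\wv_s$ on $\rad_0$, formula \eqref{accionpar} shows that $P_s$ is diagonal in the basis $\{V_IW_K\}$ and annihilates exactly the basis elements on which $\wv_s$ acts as zero, namely those of degree $r$ with $i_s=0$ and those of degree $r-1$ with $i_s=1$, since these are pushed out of $\algc_r/\algc_{r-2}$. Hence $P_s(R)=R$ forces $R$ to have no component of either type, and intersecting this over all $s=1,\dots,n$ shows that the degree-$r$ part of $R$ must have $i_s=1$ for every $s$, i.e. $I=(1,\dots,1)$ with $|K|=r-n$, while the degree-$(r-1)$ part must have $i_s=0$ for every $s$, i.e. $I=(0,\dots,0)$ with $|K|=r-1$. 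Together with $R\in\rad_0$, which forces $|K|$ even and hence admits $V_{(1)}W_K$ only when $n$ is odd, this is precisely the asserted form; the relations for $s\neq t$ are then checked to hold automatically on the surviving terms, so they impose nothing further.

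I would stress that for the support statement the signs in \eqref{accionpar} need not be tracked: since $P_s$ is diagonal with eigenvalues in $\{0,\pm1\}$, the equation $P_s(R)=R$ can only confine the support of $R$ to the $+1$-eigenspace, which already lies inside the span of the two admissible families. The only genuine obstacle, and the step most prone to slips, is the degree-and-parity accounting that matches each admissible pair $(I,K)$ against the defining constraints $|\eta_{I,K}|\in\{r,r-1\}$ and $|K|$ even of $\rad_0$; this is what produces the index ranges $|K|=r-1$ and $|K|=r-n$ and the parity gate $n$ odd on the second sum.
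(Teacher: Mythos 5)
Your proof follows essentially the same route as the paper's: decompose $\ww_p\cdot\ww_q$ into a multiple of the unit plus a radical part, substitute $a_i=\ww_p$, $a_j=\ww_q$, $a_k=a_l=\wv_s$ into \eqref{idsupjordan}, use Lemma \ref{Afirmacion1} to collapse it to $((\ww_p\cdot\ww_q)\cdot\wv_s)\cdot\wv_s=\ww_p\cdot\ww_q$, and read off from \eqref{accionpar} which basis elements can survive this for every $s$, exactly as in the paper's case analysis \textbf{(i)}--\textbf{(iv)}. Your observation that the diagonal operator $P_s$ need only be known up to sign is a minor tightening of the paper's argument (which asserts eigenvalue $+1$ outright), but it is the same proof.
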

\begin{proof} 
 Since $\ww_p\cdot\ww_q\in\mathcal{E}_0$, 
 we can assume that there exist some scalars 
 $\alpha_{(0,K)}^{p,q}$, $\alpha_{(I,K)}^{p,q}$ 
 such that 
\begin{align}\label{sjsfw}
\ww_p\cdot \ww_q=\alpha_0^{p,q} +\sum_{\substack{{I,K}\\{|\eta_{I,K}|=r-1}\\ {|K| \textnormal{ even}}}}
\alpha^{p,q}_{(I,K)}V_IW_K+  \sum_{\substack{{I,K}\\{\eta_{I,K}=r-1}\\ {|K| \textnormal{ even}}}}
\alpha^{p,q}_{(I,K)}V_IW_K,
\end{align} 
where 
$\alpha_0^{p,q}$ is $0$ or  $1$.

\medskip If we take
$a_i=\ww_p$, $a_j=\ww_q$ and $a_k=a_l=\wv_i$ in \eqref{idsupjordan}, 
and use Lemma \ref{Afirmacion1}, then 
we obtain 
$((\ww_p\cdot \ww_q)\cdot \wv_i)\cdot \wv_i=\ww_p\cdot \ww_q.$ 
Combining \eqref{sjsfw} in the stated before equality, 
we have
\begin{align}\label{jordan3forma}
\sum_{\substack{{I,K}\\{|\eta_{I,K}|=r-1}\\ {|K| \textnormal{ even}}}}
\alpha^{p,q}_{(I,K)}(\eta_{I,K}\cdot \wu_i)\cdot \wv_i &+  \sum_{\substack{{I,K}\\{\eta_{I,K}=r-1}\\ {|K| \textnormal{ even}}}}
\alpha^{p,q}_{(I,K)}(\eta_{I,K}\cdot \wu_i)\cdot \wv_i =\nonumber\\ 
& 
\sum_{\substack{{I,K}\\{|\eta_{I,K}|=r-1}\\ {|K| \textnormal{ even}}}}
\alpha^{p,q}_{(I,K)}\eta_{I,K}+  \sum_{\substack{{I,K}\\{\eta_{I,K}=r-1}\\ {|K| \textnormal{ even}}}}
\alpha^{p,q}_{(I,K)}\eta_{I,K}.
\end{align}

 Let 
 $\eta=\eta_{I,K}$
  be a nonzero element in 
  $\rad_0$. 
  Using equality \eqref{accionpar}, 
  one can easily prove the following statments

\noindent\textbf{ (i)} 
If $|\eta_{I,K}|=r-1$, 
and 
$i_j=0$,
then
$(\eta\cdot \wv_j)\cdot \wv_j=\eta$.\\
\noindent\textbf{ (ii)} 
If 
$|\eta_{I,K}|=r-1$, 
and 
$i_j=1$,
then 
$\eta\cdot \wv_j=0$ 
therefore, 
$(\eta\cdot \wv_j)\cdot \wv_j=0$.\\
\noindent\textbf{ (iii)} 
If 
$|\eta_{I,K}|=r$, 
and 
$i_j=0$, 
then 
$(\eta\cdot \wv_j)=0$, 
thus
$(\eta\cdot \wv_j)\cdot \wv_j=0$.\\
\noindent\textbf{ (iv)} 
If 
$|\eta_{I,K}|=r$, 
and 
$i_j=1$, 
then 
$(\eta\cdot \wv_j)\cdot \wv_j=\eta$.

\medskip 
Using statments 
 \textbf{(i)} - \textbf{(iv)}, 
we note that if 
$|\eta|=r-1$ and $i_j=1$ 
for some 
$j$ 
then the left part of \eqref{jordan3forma} is equal to zero 
and, consequently, the right part is zero.
Thus, in the right part of \eqref{jordan3forma} 
the only terms of length 
$r-1$ 
are of type 
$w_1^{k_1}\cdots w_{2m}^{k_{2m}}$.
Now, if 
 $|\eta|=r$ and $i_j=0$ 
 for some
  $j$ then, 
  $\wv_j\cdot \eta=0$.  
  Hence, every term of length 
  $r$ 
  on the right hand side of \eqref{jordan3forma} must contain every 
  $v_j$, 
  but this is only possible if 
  $n$ 
  is an odd integer. 

We have thus proved 
\begin{align*} 
\ww_p\cdot \ww_q=\alpha_0^{p,q} +\sum_{\substack{{K}\\{|K|=r-1}}}
\alpha^{p,q}_{(0,K)}W_K
\sum_{\substack{{K}\\{|K|=r-n}\\ n \textnormal{ odd}}}
\alpha^{p,q}_{(1,K)}V_{(1)}W_K.
\end{align*}\end{proof}

\begin{lemma}\label{lemaexistenciau}  
 \begin{align*}
 \ww_p=w_p+\sum_{\substack{{K}\\{|K|=r}}}
\xi^{p}_{(0,K)}W_K+\sum_{\substack{{K}\\{|K|=r-n-1}\\ n\textnormal{ odd}}}
\xi^{p}_{(1,K)}V_{(1)}W_K.
\end{align*}
\end{lemma}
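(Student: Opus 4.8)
The plan is to pin down the radical correction of $\ww_p$ purely from the even action, using Lemma \ref{Afirmacion1} as the only structural input and the explicit Clifford action \eqref{accionpar} for the bookkeeping. First I would fix one lift $w_p\in\alga_1$ of $\bar w_p$ for which $\wv_j\cdot w_p=0$ for all $j$; the existence of a lift with this property is exactly the content of Lemma \ref{Afirmacion1}, and recall that $v_jw_p=0$ in $\jor$, so this is the expected leading relation. Then \emph{any} lift $\ww_p$ of $\bar w_p$ satisfying $\wv_j\cdot\ww_p=0$ differs from $w_p$ by an element $\Lambda^p\in\rad_1$ with $\wv_j\cdot\Lambda^p=0$ for every $j$. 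Thus the whole statement reduces to computing the common kernel $N_1=\{\,n\in\rad_1:\ \wv_j\cdot n=0\ \text{for all }j\,\}$ and checking that it is spanned exactly by the $W_K$ with $|K|=r$ and the $V_{(1)}W_K$ with $|K|=r-n-1$.

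To compute $N_1$ I would split $\Lambda^p=\Lambda^p_r+\Lambda^p_{r-1}$ according to Clifford degree $r$ and $r-1$. By \eqref{accionpar}, acting with $\wv_j$ sends a degree-$r$ monomial into degree $r-1$ (deleting $v_j$ when $i_j=1$, and killing it when $i_j=0$) and sends a degree-$(r-1)$ monomial into degree $r$ (inserting $v_j$ when $i_j=0$, and killing it when $i_j=1$). Since the two outputs live in different degrees, $\wv_j\cdot\Lambda^p=0$ forces $\wv_j\cdot\Lambda^p_r=0$ and $\wv_j\cdot\Lambda^p_{r-1}=0$ separately, for every $j$. On degree-$r$ monomials the deletion operator $\eta\mapsto\eta\cdot\wv_j$ is injective on those having $i_j=1$, so no cancellation is possible: $\wv_j\cdot\Lambda^p_r=0$ for all $j$ forces every surviving monomial to have $i_j=0$ for all $j$, i.e.\ $I=(0)$, leaving only the pure $W_K$ with $|K|=r$. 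Dually, the insertion operator on degree-$(r-1)$ monomials is injective on those with $i_j=0$, so $\wv_j\cdot\Lambda^p_{r-1}=0$ for all $j$ forces $i_j=1$ for all $j$, i.e.\ $I=(1)$ and hence $|K|=(r-1)-n=r-n-1$, leaving only the $V_{(1)}W_K$.

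Finally I would impose the parity condition built into the grading: since $\Lambda^p\in\rad_1$, every surviving monomial must have $|K|$ odd. For the first family $|K|=r$ this is the (implicit) requirement that $r$ be odd; for the second family the requirement that $|K|=r-n-1$ be odd then forces $n$ odd, which is precisely the restriction displayed in the statement. The constant term $w_p$ gives the leading summand, and collecting the two surviving families yields the claimed expression. I expect the main obstacle to be exactly this last parity/sign bookkeeping: the action formulas \eqref{leymulgen} and \eqref{generalwimpar} carry signs depending on $|\eta_{I,K}|$ and on the position of the index, so one must check that no mixed $V_I$ with $0<|I|<n$ can survive through an accidental cancellation and that the odd-$|K|$ constraint is propagated correctly to produce the $n$-odd condition rather than a larger family. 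The case analyses \textbf{(a)}--\textbf{(d)} of Lemma \ref{Afirmacion1} and \textbf{(i)}--\textbf{(iv)} of Lemma \ref{lemaformaimpar} provide the template for this verification.
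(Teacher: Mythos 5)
Your argument is correct and takes essentially the same route as the paper: both reduce the claim to the vanishing of $\wv_j\cdot\Lambda^p$ for all $j$ via Lemma \ref{Afirmacion1}, split the radical correction by Clifford degree $r$ versus $r-1$, and use the fact that $(\eta\cdot\wv_j)\cdot\wv_j=\eta$ (your injectivity of the deletion/insertion operators) to kill every mixed monomial, leaving only $W_K$ with $|K|=r$ and $V_{(1)}W_K$ with $|K|=r-n-1$. The sign and parity bookkeeping you flag as the main obstacle is exactly what the paper handles by citing statements \textbf{(i)}--\textbf{(iv)} of Lemma \ref{lemaformaimpar}.
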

\proof
 Let 
 $p\in\{1,\ldots,2m\}$ 
  be a fixed integer. We assume that there exist some scalars 
 $\xi_{(I,K)}^p$ 
 such that 
 \begin{align}\label{sjsfexuc}
\ww_p=w_p+\sum_{\substack{{I,K}\\{|\eta_{I,K}|=r-1}\\ {K \textnormal{ odd}}}}
\xi^{p}_{(I,K)}\eta_{I,K} +
\sum_{\substack{{I,K}\\{|\eta_{I,K}|=r}\\ {K \textnormal{ odd}}}}
\xi^{p}_{(I,K)}\eta_{I,K}.
\end{align}
Using Lemma 
\ref{Afirmacion1},
we have that 
$\ww_p\cdot\wv_j=0$, 
and therefore,
 \begin{align}\label{sjsfexu}
0=\sum_{\substack{{I,K}\\{|\eta_{I,K}|=r-1}\\ {K \textnormal{ odd}}}}
\xi^{p}_{(I,K)}\eta_{I,K}\cdot \wv_j +
\sum_{\substack{{I,K}\\{|\eta_{I,K}|=r}\\ {K \textnormal{ odd}}}}
\xi^{p}_{(I,K)}\eta_{I,K}\cdot \wv_j.
\end{align}
 Let 
 $\eta=\xi_{I,K}\eta_{I,K}$ 
 be a nonzero element in \eqref{sjsfexu}.
  We shall  use atatments \textbf{(i)} - \textbf{(iv)} 
from Lemma \ref{lemaformaimpar}. 

\noindent We note that If  
 $|\eta|=r-1$  
 and 
 $i_j=1$, 
 then 
 $\eta\cdot \wv_j=0$. 
Using  \eqref{sjsfexu}  one can easily verify that if 
$I\neq (1)$ 
then 
 $\xi_{(I,K)}=0$.
 Therefore, we see that the only elements in 
 \eqref{sjsfexuc} 
 of lenght 
 $r-1$
 are of type  
 $V_{(1)}W_K$. 
  
\noindent
Let
 $|\eta|=r$ 
 and 
 $i_j=1$, 
 then 
 $(\eta\cdot \wv_j)\cdot \wv_j=\eta$ 
 and therefore,  if 
 $V_I\neq 1$, 
 then
 $\xi_{(I,K)}=0$. 
 Thus, the only elements of lenght 
 $r$ 
  that are not  zero 
  on the right part of 
  \eqref{sjsfexu} 
  are precisely those where 
  $i_j=0$. 
  As this is valid for every 
  $j$, 
  we have that the only elements of lenght 
  $r$ 
  that appear in 
  \eqref{sjsfexuc} 
  are of type 
  $w_1^{k_1}\cdots w_{2m}^{k_{2m}}$, with 
  $|K|=r$. 
  Thus, we have proved that
\begin{equation*}
\ww_p=w_p+\sum_{\substack{{K}\\{|K|=r}}}
\xi^{p}_{(0,K)}W_K+\sum_{\substack{{K}\\{|K|=r-n-1}\\ n\textnormal{ odd}}}
\xi^{p}_{(1,K)}V_{(1)}W_K.
\end{equation*}
\vspace{-2mm}
\endproof 

\subsection{
$\rad$  be isomorphic to $u\,\algc_r/u\,\algc_{r-2}$,  where 
$r$  is
an even integer and 
$u$ is
an even vector.} 
Without loss of generality we can take
 \begin{align*}
\rad_0&= \textnormal{{\bf{\vect}}}_\corf\,\langle\, uV_IW_K\, |\eta_{I,K}|=r,r-1 \text{ and } |K| \text{ even } \rangle, \\
\rad_1&= \textnormal{{\bf{\vect}}}_\corf\,\langle\, uV_IW_K, |\eta_{I,K}|=r,r-1 \text{ and } |K| \text{ odd } \rangle.
 \end{align*} 
 As in above case, one can easily verify that
\begin{align*}
uV_IW_K\circ \wv_j=
(-\frac{1}{2})^{i_1+\cdots +i_{j-1}}uV_{(i_1,\ldots,i_j+1,\ldots,i_n)}W_K 
((-1)^{|\eta_{I,K}|-i_j}-1).
\end{align*} 
Moreover, we have 
\begin{eqnarray*}
\begin{aligned}
uV_IW_K\cdot \wv_j &= \left\{\begin{array}{lll}
\pm uV_{(i_1,\ldots,i_{j-1}, 1, \ldots, i_n)}W_K & \textnormal{ if } & |\eta_{I,K}|=r-1, \, i_j=0,\\ 
\pm uV_{(i_1,\ldots,i_{j-1}, 1, \ldots, i_n)}W_K & \textnormal{ if } & |\eta_{I,K}|=r, \, i_j=1,\\ 
\end{array}\right. \end{aligned}  \label{accionparrpar}\\ 
\begin{aligned}
uV_IW_K\cdot \ww_p&=\left\{
\begin{array}{lll}
\mp k_{p\pm 1}uV_IW_{(k_1,\ldots,k_{p\pm1}-1,\ldots,k_{2m})}& \textnormal { if } & |\eta_{I,K}|=r, \\ 
uV_IW_{(k_1,\ldots,k_{p}+1,\ldots,k_{2m})} &\textnormal { if }& |\eta_{I,K}|=r-1.  
\end{array}\right.
\end{aligned} \label{accionimparrpar}
\end{eqnarray*}
Now, we note that there exist analogues to Lemmas \ref{Afirmacion1}, \ref{lemaformaimpar} and  \ref{lemaexistenciau}, implying the following equalities
\begin{align*}
\ww_{p}\cdot \ww_{q}&=\delta_{p+1,q}+\sum_{\substack{{K}\\{|K|=r}}}
\alpha^{p,q}_{(0,K)}uW_{(k_1,\ldots,k_{2m})}+
\sum_{\substack{{K}\\{|K|=r-n}\\ n\textnormal { even}}}
\alpha^{p,q}_{(1,K)}uV_{(1)}W_{(k_1,\ldots,k_{2m})},\\
\ww_p&=w_p+\sum_{\substack{{K}\\{|K|=r-1}}}
\xi^{i}_{(0,K)}uW_{(k_1,\ldots,k_{2m})}+\sum_{\substack{{K} \\{|K|=r-n-1}\\ n\textnormal { even}}}
\xi^{i}_{(1,K)}uV_{(1)}W_{(k_1,\ldots,k_{2m})}.
\end{align*}

We shall prove 
the following theorem 
\begin{theorem}\label{casosupfor}
Let
$\alga$
be a finite-dimensional Jordan superalgebra,
$\rad$
be the solvable radical of
$\alga$
such that
$\radd=0$
and
$\alga/\rad$
is isomorphic to the Jordan superalgebra of superform
$\jor$. 
Then $\alga\cong\alga/\rad\oplus\rad$ if and only if, 
$\rad\in\overline{\mathfrak{M}}(\alga/\rad; J^{(k)})$ 
where 
$J^{(k)}=\algc_{2k+1}/\algc_{2k-1}$ if $\dim V_0=2k+1$ 
or 
$J^{(k)}=a\algc_{2k}/a\algc_{2k-2}$ if $\dim V_0=2k$. 
\end{theorem}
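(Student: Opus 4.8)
The plan is to prove both implications of the equivalence after reducing to an irreducible radical: for the sufficiency I construct a splitting by correcting the odd generators inside the annihilator of the even action, while for the necessity I exhibit a non-split superalgebra whenever the forbidden bimodule $J^{(k)}$ occurs.

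First I would reduce to the irreducible case. By the discussion following Theorem~\ref{teoredu}, the class $\overline{\mathfrak{M}}(\jor; J^{(k)})$ of $\jor$-superbimodules with no irreducible subbimodule isomorphic to $J^{(k)}$ is closed under subbimodules and homomorphic images; hence Theorem~\ref{teoredu} reduces the WPT for every $\alga$ with $\rad\in\overline{\mathfrak{M}}(\jor; J^{(k)})$ to the case in which $\rad$ is itself irreducible and not isomorphic to $J^{(k)}$. By the Martinez--Zelmanov classification such a $\rad$ is one of the two families $\algc_r/\algc_{r-2}$ or $u\,\algc_r/u\,\algc_{r-2}$ analyzed above, and comparing degrees shows that $J^{(k)}$ is precisely the boundary member $r=n$, where $n=\dim\mathcal{V}_0$ (with $n$ odd in the first family and $n$ even in the second).

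For the sufficiency I would look for a complement $\algs=\algs_0\dotplus\bigl(\sum_p\corfd\wsp_p\bigr)$ with corrected generators $\wsp_p=\ww_p+c_p$. To preserve $\wv_j\cdot\wsp_p=0$ (which holds for $\ww_p$ by Lemma~\ref{Afirmacion1}), each $c_p\in\rad_1$ must lie in the annihilator of all even generators; by Lemma~\ref{lemaexistenciau} (or directly from \eqref{accionpar}) this annihilator is spanned by the top forms $W_K$ of length $r$ and the bottom forms $V_{(1)}W_{K'}$ of length $r-1$. Using $\radd=0$ together with Lemma~\ref{lemaformaimpar} one obtains
\begin{align*}
\wsp_p\cdot\wsp_q=\alpha_0^{p,q}\cdot 1+\Lambda^{p,q}+\ww_p\cdot c_q+c_p\cdot\ww_q,
\end{align*}
where $\Lambda^{p,q}\in\rad_0$ is the radical summand of $\ww_p\cdot\ww_q$ and $\alpha_0^{p,q}\cdot 1$ reproduces the product $w_p\cdot w_q$ in $\jor$. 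Since the action of $\alga$ on $\rad$ factors through $\jor$, the terms $\ww_p\cdot c_q$ equal the $\jor$-action of $w_p$ given by \eqref{accionimpar}, so a splitting exists if and only if the linear system $\ww_p\cdot c_q+c_p\cdot\ww_q=-\Lambda^{p,q}$ is solvable with all $c_p$ in this annihilator.

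The heart of the matter, and the step I expect to be the main obstacle, is the solvability of this system and its failure exactly when $r=n$. By \eqref{accionimpar} the odd action lowers the $W$-degree on summands of length $r$ and raises it on summands of length $r-1$; hence the coboundary $\ww_p\cdot c_q+c_p\cdot\ww_q$ produces the length-$(r-1)$ terms $W_K$ from the top parts of the $c$'s and the length-$r$ terms $V_{(1)}W_K$ (present only when $n$ is odd) from the bottom parts $V_{(1)}W_{K'}$. To cancel a term $V_{(1)}W_K$ with $|K|=r-n$ one needs a bottom form $V_{(1)}W_{K'}$ with $|K'|=r-n-1$; when $r\neq n$ this index is nonnegative and the system can be solved degree by degree, yielding $\wsp_p\cdot\wsp_q=\alpha_0^{p,q}\cdot 1$ and hence $\algs\cong\jor$ with $\alga=\algs\oplus\rad$. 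When $r=n$ the extreme term $V_{(1)}$ (i.e.\ $|K|=0$) would require $|K'|=-1$, so it lies outside the image of the correction map and the system is inconsistent; the identical bookkeeping with $uV_{(1)}$ settles the second family for even $n$. Choosing $\alga$ so that this $V_{(1)}$-coefficient in $\Lambda^{p,q}$ is nonzero produces a superalgebra with $\rad\cong J^{(k)}$ that admits no splitting, which proves the necessity and shows that $H^2(\jor,J^{(k)})\neq 0$.
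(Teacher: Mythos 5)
Your proposal follows essentially the same route as the paper: reduction to an irreducible radical via Theorem \ref{teoredu}, the normal forms of Lemmas \ref{Afirmacion1}, \ref{lemaformaimpar} and \ref{lemaexistenciau}, and the observation that the correction system fails precisely at $r=n$ because cancelling the $V_{(1)}$ (resp.\ $uV_{(1)}$) term would require a correction of negative $W$-degree --- which is exactly the paper's system \eqref{prosol} and the remark following it. The only place you are lighter than the paper is the necessity direction, where the paper writes out the full multiplication table of a non-split extension with $w_1\cdot w_2=1+v_1\cdots v_n$ rather than merely asserting that a cocycle with nonzero $V_{(1)}$-coefficient is realized by an actual Jordan superalgebra.
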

\begin{proof} 
By Theorem \ref{teoredu} it suffices to prove the theorem when 
$\rad$ 
is irreducible. So, by Theorem 7.7 in \cite{zelmar2} 
we only  need to consider the two cases. 

\noindent 
 Using Lemma \ref{Afirmacion1}, 
we have 
$\ww_p\cdot\wv_i=0$ 
for 
$i=1,\ldots, n$; $p=1,\ldots, 2m$.

Let 
$p$ 
be an odd integer.  
Due to  Lemmas 
\ref{lemaformaimpar} and  \ref{lemaexistenciau}, 
we can assume that
\begin{align}\label{zpzqbase}
\ww_{p}\cdot \ww_{q}=\delta_{p+1,q}+\sum_{\substack{{K}\\{|K|=r-1}}}
\alpha^{p,q}_{(0,K)}W_K+
\sum_{\substack{{K}\\{|K|=r-n}\\ n\textnormal { odd}}}
\alpha^{p,q}_{(1,K)}V_{(1)}W_K, \\
\ww_p=w_p+\sum_{\substack{{K}\\{|K|=r}}}
\xi^{p}_{(0,K)}W_K+\sum_{\substack{{K} \\{|K|=r-n-1}\\ n\textnormal { odd}}}
\xi^{p}_{(1,K)}V_{(1)}W_K.
\end{align}
Thus
\begin{equation}\label{zpzqprod}
\begin{aligned} 
\ww_p\cdot\ww_q=& w_p\cdot w_q +\sum_{\substack{{K}\\{|K|=r}}}
\xi^{p}_{(0,K)}W_K\cdot w_q+\sum_{\substack{{K} \\{|K|=r-n-1}\\ n\textnormal { odd}}}
\xi^{p}_{(1,K)}V_{(1)}W_k\cdot w_q +\\
&\sum_{\substack{{K}\\{|K|=r}}} \xi^{p}_{(0,K)}w_p\cdot W_K +
\sum_{\substack{{K} \\{|K|=r-n-1}\\ n\textnormal { odd}}} \xi^{p}_{(1,K)}w_p\cdot V_{(1)}W_K.
\end{aligned}
\end{equation}
 Using  
 \eqref{generalwimpar}, \eqref{zpzqbase} and \eqref{zpzqprod}, 
 we have that 
 $\ww_p\cdot\ww_q=\delta_{p+1,q}$ 
 if and only if,
\begin{equation}\label{probandosf}
\begin{aligned}
0=&
\sum_{\substack{{K}\\{|K|=r-1}}}
\alpha^{p,q}_{(0,K)}W_K+ 
\sum_{\substack{{K}\\{|K|=r-1}}}
(-k_{p+1}\xi^{p}_{(0,K)})W_{(k_1,\ldots,k_{p+1}-1,\ldots,k_{2m})} +\\ 
&
\sum_{\substack{{K}\\{|K|=r-1}}}
k_{p-1}\xi^{q}_{(0,K)}W_{(k_1,\ldots,k_{p-1}-1,\ldots,k_{2m})} 
+
 \sum_{\substack{{K}\\{|K|=r-n}\\ n\textnormal { odd}}}
\alpha^{p,q}_{(1,K)}V_{(1)}W_K +\\
&
\sum_{\substack{{K} \\{|K|=r-n}\\ n\textnormal { odd}}}
\xi^{p}_{(1,K)}V_{(1)}W_{(k_1,\ldots,k_{p+1}-1,\ldots,k_{2m})} 
+\sum_{\substack{{K} \\{|K|=r-n}\\ n\textnormal { odd}}}
\xi^{q}_{(1,K)}V_{(1)}W_{(k_1,\ldots,k_{p}+1,\ldots,k_{2m})}. 
\end{aligned}
\end{equation}
 Combining the above equality with a linear independence property of the elements
 $V_IW_K$ 
we have the following relations:
\begin{equation}\label{SSPrm1}
\begin{aligned}
\sum_{\substack{{K}\\{|K|=r-1}}}\alpha^{p,q}_{(0,K)}W_K
=&\sum_{\substack{{K} \\{|K|=r-1}}}k_{p+1}\xi^{p}_{(0,K)}W_{(k_1\ldots,k_{p+1}-1,\ldots,k_{2m})} -\\
&\sum_{\substack{{K}\\{|K|=r-1}}}
(k_{p-1}\xi^{q}_{(0,K)})W_{(k_1\ldots,k_{p-1}-1,\ldots,k_{2m})}, 
\end{aligned}
\end{equation}

\begin{equation}\label{SSPr}
\begin{aligned}
\sum_{\substack{{K}\\{|K|=r-n}\\ n\textnormal { odd}}}\alpha^{p,q}_{(1,K)}V_{(1)}W_K &=
-\sum_{\substack{{K} \\{|K|=r-n}\\ n\textnormal { odd}}}
\xi^{p}_{(1,K)}V_{(1)} W_{(k_1\ldots,k_{p}+1,\ldots,k_{2m})} -\\ 
& \sum_{\substack{{K} \\{|K|=r-n}\\ n\textnormal { odd}}}
\xi^{q}_{(1,K)}V_{(1)}W_{(k_1\ldots,k_{p}+1,\ldots,k_{2m})}.
\end{aligned}
\end{equation}
 Let
 $\alpha_{(0,S_{t})}^{p,q}W_{S_{t}}$ be a nonzero element
 at the left part of \eqref{SSPrm1}, 
 such that
 $S_t=(s_1,\ldots,s_{p-1},s_p,s_{p+1},\ldots,s_n)$
is a 2m-tupla, with  
 $|S_t|=r-1$. 
 We shall find a 2m-tupla 
 $S_{p+1}$ and $S_{p-1}$ 
 on the right part of  \eqref{SSPrm1}, such that 
 $S_{t+1}=(s_1,\ldots,s_{p-1},s_p,$ $s_{p+1}+1,\ldots,s_n) $ 
 and 
 $S_{t-1}=(s_1,\ldots,s_{p-1}+1,s_p,s_{p+1},\ldots,s_n)$. 
 We observe that  $|S_{t-1}|=|S_{t+1}|=r$.
 
 \medspace
Applying similar arguments to above stated, and using \eqref{SSPr}, 
 we have that for each
 $K_t=(k_1,\ldots,k_p,\ldots,k_n)$ 
 we should take 
 $\wt{K}_t=(k_1,\ldots,k_p-1,\ldots,k_n)$.  
Moreover, if 
$|K_t|=r-n$, 
then 
$|\wt{K}_t|=r-n-1$.

\medspace It is easy to see that the equations  \eqref{SSPrm1} and  \eqref{SSPr} are respectively equivalent to 
\begin{equation}\label{ayudar} 
\begin{aligned}
&\sum_{\substack{{K}\\{|K|=r-1}}}
\Big(\alpha^{p,q}_{(0,S)}-(s_{p+1}+1)\xi^{p}_{(0,S_{p+1})}+(s_{p-1}+1)\xi^{q}_{(0,S_{p-1})}\Big)W_{S_t} =0,\\
& \sum_{\substack{{K}\\{|K|=r-n}\\ n\textnormal { odd}}}
\Big(\alpha^{p,q}_{(1,K_p)}+\xi^{q}_{1,\wt{K}_p}+\xi^{p}_{1,\wt{K}_p}\Big)V_{(1)}W_{K_t}=0.
\end{aligned}
\end{equation}
Using the linear independance of 
$W_K$, $V_{(1)}W_K$
and  \eqref{ayudar}, for each 
$t\in\{1,\ldots, 2m\}$,
we have
\begin{equation}
\begin{aligned}\label{prosol}
\alpha^{p,q}_{(0,S_t)}-(s_{p+1}+1)\xi^{p}_{(0,S_{t+1})}+(s_{p-1}+1)\xi^{q}_{(0,S_{t-1})} = 0, \\
\alpha^{p,q}_{(1,K_t)}+\xi^{q}_{(1,\wt{K}_t)}+\xi^{p}_{(1,\wt{K}_t)} =0.
\end{aligned}  
\end{equation} 
Hence, we have a solvable linear equation system if $r\neq n$. 
We note that an analogous procceding is valid if $r$ is an even integer.
\end{proof}

\begin{remark}
By Lemmas \ref{lemaformaimpar} and \ref{lemaexistenciau}, if 
$n$ 
is an odd integer and 
$r=n$, 
then
\begin{equation*}
\ww_p=w_p+\sum_{\substack{{K}\\{|K|=r}}}
\xi^{p}_{(0,K)}W_K, \quad
 \textnormal{ and }\quad 
{\ww}_{p}\cdot {\ww}_{q}=\delta_{i+1,j}+\sum_{\substack{{K}\\{|K|=n-1}}}
\alpha^{p,q}_{(0,K)}W_K+
\alpha^{p,q}_{(1,0)}V_{(1)}.
\end{equation*}
We see that the system
$\alpha^{p,q}_{(0,S_t)}-\xi^{p}_{(0,S_{t+1})}+\xi^{q}_{(0,S_{t-1})}=0$, and  $\alpha^{p,q}_{(1,0)} =0$
 has no solution when $\alpha^{p,q}_{(1,0)}\neq 0$.
\end{remark}


\subsection{counter-examples to WPT for Jordan superalgebras of superform with radical $\algc_r/\algc_{r-2}$ and $\dim\jor_0=r$}
Now we will show that the restrictions imposed in the Theorem \ref{casosupfor} are essential, and we have two cases to consider:

\noindent {\bf{Case 1.}} 
Let $n$ be an odd integer. Consider the  superalgebra 
$$\jor=(\corfd 1+\corfd v_1+\cdots+\corfd v_n+\rad_0)\dotplus (\corfd w_1+\cdots+\corfd w_{2m}+\rad_1).$$ 
where 
\begin{equation*}
\begin{aligned}
\rad_0=\textrm{Spann}\langle\, v_1^{i_1}\cdots v_n^{i_n}w_1^{k_1}\cdots w_{2m}^{k_{2m}}, \quad |K| \textnormal{ is  even}, \mid I\mid +\mid K\mid=n-1 \text{ or } n \,\rangle, \\
\rad_1=\textrm{Spann}\langle\, v_1^{i_1}\cdots v_n^{i_n}w_1^{k_1}\cdots w_{2m}^{k_{2m}}, \quad |K| \textnormal{ is odd },
\mid I\mid +\mid K\mid=n-1 \text{ or } n  \,\rangle
\end{aligned}
\end{equation*}
 where $i_1,\ldots,i_n$ are $0$ or $1$  and $k_i$ are nonnegative integers, 
 $|K|=k_1+\cdots+k_{2m}$, $|I|=i_1+\cdots+i_n$.
All nonzero products of the basis elements of $\jor$ are defined as follows
\begin{equation*}
\begin{aligned}
v_i ^2=1,  
w_{1}\cdot w_{2}=1+v_1\cdots v_n=-w_2\cdot w_1,  \\
 w_{2s-1}\cdot w_{2s}=- w_{2s}\cdot w_{2s-1}=1\, \textnormal{ for } s\in\{2,3,\ldots,m\}, 
\end{aligned}
\end{equation*}
\begin{align*}
v_1^{i_1}\cdots v_n^{i_n}w_1^{k_1}\cdots w_{2m}^{k_{2m}}\cdot w_{p}=\frac{1}{2}v_1^{i_1}\cdots v_n^{i_n}w_1^{k_1}\cdots w_{p}^{k_{p}+1}\cdots w_{2m}^{k_{2m}}(1+(-1)^{|I|+|K|})-\nonumber\\ 
k_{p+1}v_1^{i_1}\cdots v_n^{i_n}w_1^{k_1}\cdots w_{p+1}^{k_p-1}\cdots w_{2m}^{k_{2m}} \textnormal{ if } p=2s-1,\, s\in\{1,\ldots,m\},
\end{align*}
\begin{align*}
v_1^{i_1}\cdots v_n^{i_n}w_1^{k_1}\cdots w_{2m}^{k_{2m}}\cdot w_{p}=\frac{1}{2}v_1^{i_1}\cdots v_n^{i_n}w_1^{k_1}\cdots w_{p}^{k_{p}+1}\cdots w_{2m}^{k_{2m}}(1+(-1)^{|I|+|K|})+\nonumber\\ 
k_{p-1}v_1^{i_1}\cdots v_n^{i_n}w_1^{k_1}\cdots w_{p-1}^{k_{p-1}-1} \cdots w_{2m}^{k_{2m}} \textnormal{ if } p=2s,\, s\in\{1,\ldots,m\},
\end{align*}
\begin{align*}
v_1^{i_1}\cdots v_n^{i_n}w_1^{k_1}\cdots w_{2m}^{k_{2m}}\cdot v_{j}&=\nonumber\\  
(-\frac{1}{2})^{i_1+\cdots+i_{j-1}}v_1^{i_1}\cdots &v_j^{i_j+1}\cdots  v_n^{i_n}w_1^{k_1}\cdots w_{2m}^{k_{2m}}(1+(-1)^{|I|+|K|-i_j}).
\end{align*}

\medskip
We note that $\jor/\rad=(\corfd 1+\corfd v_{1}+\cdots+\corfd v_n)\dotplus(\corfd w_1+\cdots+\corfd w_{2m})$ is a Jordan  superalgebra isomorphic to Jordan superalgebra of superform, $\rad$ is isomorphic to $\algc_{n}/\algc_{n-2}$.

\medskip
If we assume that the WPT is valid for $\jor$, then, for $i=1,\ldots, 2m$ there exists $\wt{w}_{i} \in\jor_1$ such that $\wt{w}_{i}\equiv w_i (\textrm{mod}\rad_1)$, and  $\wt{w}_{2i-1}\cdot\wt{w}_{2i}=1$.

\medskip 
\noindent By Lemma \ref{lemaexistenciau} there exist 
 $\beta_{K}, \,\xi_{K},\,\alpha,\,\lambda\in\corf$ 
 such that 
 $\displaystyle{\wt{w}_{1}=w_{1}+\sum_{|K|=n}\beta_{K}w_1^{k_{1}}\cdots w_{2m}^{k_{2m}}}$ and 
$\displaystyle{\wt{w}_{2}=w_{2}+\sum_{|K|=n}\xi_{K}w_1^{k_{1}}\cdots w_{2m}^{k_{2m}}}$. 
Hence,
 \begin{align*}
 \wt{w}_{1}\cdot\wt{w}_{2}=w_{1}\cdot w_{2}+\sum_{|K|=n}\xi_{k}w_1\cdot w_1^{k_{1}}\cdots w_{2m}^{k_{2m}}+\sum_{|K|=n}\beta_{K}w_1^{k_{1}}\cdots w_{2m}^{k_{2m}}\cdot w_2.
 \end{align*}

\noindent We observe that 
 $\wt{w}_{1}\cdot\wt{w}_{2}=1$
 if and only if 
 $\displaystyle{v_1\cdots v_n+\sum_{|K|=n}\omega_{K}w_1^{t_{1}}\cdots w_{2m}^{t_{2m}}}=0$. 
 Using the fact that 
  $v_1\cdots v_n$ and 
  $ w_1^{k_{1}}\cdots w_{2m}^{k_{2m}}$ 
  are linearly independent, we have a contradiction.

\medskip
\noindent{\bf{Case 2}} 
Let $n$ be an even integer. Consider the  superalgebra 
$$\jor=(\corfd 1+\corfd v_1+\cdots+\corfd v_n+\rad_0)\dotplus (\corfd w_1+\cdots+\corfd w_{2m}+\rad_1).$$ 

$\rad_0$ is spanned by 
$\langle\, uv_1^{i_1}\cdots v_n^{i_n}w_1^{k_1}\cdots w_{2m}^{k_{2m}}, \quad |K| \textnormal{ is  even} \,\rangle$ 
and 
$\rad_1$ is spanned by 
$\langle\, uv_1^{i_1}\cdots v_n^{i_n}w_1^{k_1}\cdots w_{2m}^{k_{2m}}, \quad |K| \textnormal{ is odd} \,\rangle$,
 where $i_1,\ldots,i_n$ are $0$ or $1$  and $k_i$ are nonnegative integers, 
 $|K|=k_1+\cdots+k_{2m}$, $|I|=i_1+\cdots+i_n$ and $|K|+|I|=n$ or 
 $|K|+|I|=n-1$.

All nonzero products of the basis elements of $\jor$ are defined as follows
\begin{align*}
v_i ^2&=1, \quad 
w_{1}\cdot w_{2}=1+uv_1\cdots v_n=-w_2\cdot w_1,\\
 w_{2s-1}\cdot w_{2s}&=- w_{2i}\cdot w_{2i-1}=1\, \textnormal{ for } s\in\{2,3,\ldots,m\}, 
\end{align*}
\begin{align*}
uv_1^{i_1}\cdots v_n^{i_n}w_1^{k_1}\cdots w_{2m}^{k_{2m}}\cdot w_{p}&=\frac{1}{2}uv_1^{i_1}\cdots v_n^{i_n}w_1^{k_1}\cdots w_{p}^{k_{p}+1}\cdots w_{2m}^{k_{2m}}(1+(-1)^{|I|+|K|})+\\ 
k_{p+1}uv_1^{i_1}\cdots v_n^{i_n}&w_1^{k_1}\cdots w_{p+1}^{k_{p+1}-1}\cdots w_{2m}^{k_{2m}},\quad  \textnormal{ if } p=2s-1,\, s\in\{1,\ldots,m\},
\end{align*}
\begin{align*}
uv_1^{i_1}\cdots v_n^{i_n}w_1^{k_1}\cdots w_{2m}^{k_{2m}}\cdot w_{p}&=\frac{1}{2}uv_1^{i_1}\cdots v_n^{i_n}w_1^{k_1}\cdots w_{p}^{k_{p}+1}\cdots w_{2m}^{k_{2m}}(1+(-1)^{|I|+|K|})-\\ 
k_{p-1}uv_1^{i_1}\cdots v_n^{i_n}&w_1^{k_1}\cdots w_{p-1}^{k_{p-1}-1} \cdots w_{2m}^{k_{2m}},\quad  \textnormal{ if } p=2s,\, s\in\{1,\ldots,m\},
\end{align*}
\begin{align*}
uv_1^{i_1}\cdots v_n^{i_n}w_1^{k_1}\cdots w_{2m}^{k_{2m}}\cdot v_{j}&=\\  
(-\frac{1}{2})^{i_1+\cdots+i_{j-1}+1}&uv_1^{i_1}\cdots v_j^{i_j+1}\cdots  v_n^{i_n}w_1^{k_1}\cdots w_{2m}^{k_{2m}}(1+(-1)^{|I|+|K|-i_j}).
\end{align*}

\medskip
 It is easy to verify  that $\jor/\rad$ is a Jordan superalgebra of superform  and $\rad\cong u\,\algc_{n}/u\,\algc_{n-2}$.

 If we assume that the WPT is valid for $\jor$, then, for $i=1,\ldots, 2m$ there exists $\wt{w}_{i} \in\jor_1$ such that $\wt{w}_{i}\equiv w_i(\textrm{mod}\rad_1)$, and  $\wt{w}_{2i-1}\cdot\wt{w}_{2i}=1$, $i\geq 2$.
 
 \medskip
 By an analogous  to Lemma \ref{lemaexistenciau}, we have that 
 $\displaystyle{\wt{w}_{1}=w_{1}+\sum_{|K|=n-1}\beta_{K}uw_1^{k_{1}}\cdots w_{2m}^{k_{2m}}}$ and 
 $\displaystyle{\wt{w}_{2}=w_{2}+\sum_{|K|=n-1}\xi_{K}uw_1^{k_{1}}\cdots w_{2m}^{k_{2m}}}$ for some  
 $\beta_{K}, \,\xi_{K},\,\alpha,\,\lambda\in\corf$.
 
 It is clear that 
 $\displaystyle{\wt{w}_{1}\cdot\wt{w}_{2}=w_{1}\cdot w_{2}+\sum_{|K|=n-1}\omega_{K}uw_1\cdot w_1^{t_{1}}\cdots w_{2m}^{t_{2m}}}$,
 $\omega_K\in\corf$.
Therefore 
 $\wt{w}_{1}\cdot\wt{w}_2=1$
 if and only if 
$\displaystyle{uv_1\cdots v_n+\sum_{|K|=n-1}\omega_K uw_1\cdot w_1^{k_{1}}\cdots \ w_{2m}^{k_{2m}}}=0$.
Once again, 
 $uv_1\cdots v_n$ and  $uw_1^{k_{1}}\cdots w_{2m}^{k_{2m}}$ are linearly independent, consequently, we have a contradiction.

 \section{Superalgebra $\algdt$ and Kaplansky $\kap$}
 In this section, we  consider the Jordan superalgebra
 $\algdt=(\corfd e_1+\corfd e_2)\dotplus(\corfd x+\corfd y)$,
 and Kaplansky, 
 $\kap=(\corfd e)\dotplus(\corfd x+\corfd y)$.
 
 We stress that 
 $\algdt$ 
 is a simple Jordan superalgebra if 
 $t\neq 0$. If $t=0$, then $\algdc$ contain $\kap$.
 Unital irreducible superbimodules over 
 $\algdt$ and $\kap$ 
 were classified by C.~Martinez and E.~Zelmanov in~\cite{zelmar3}
 and by M.~Trushina in~\cite{Tru}.
 In this section, we shall use the examples, 
 notations and ideas introduced  by M. Trushina.

 \medskip
 Let $\sld$ be a Lie algebra with the  basis
 $e,f,h$
 and the multiplication given by 
 $[f,h]=2f,\quad [e,h]=2e,\quad [e,f]=h, $ where $[a,b]=ab-ba$.
 
 We shall say that a module $\biml$ with the basis
 $l_0,l_1,\ldots,l_n$
 is an irreducible
 $sl_2$-module with standard basis
 $l_0,l_1,\ldots,l_n$
 if
 \begin{equation}
 \begin{array}{lll}\label{sl2modulo}
 &l_i\cdot h=(n-2i)l_i,&\nonumber\\
 l_0\cdot e=0, &\, l_i\cdot e=(-in+i(i-1))l_{i-1} \,& \textnormal{ for } i\,>\,0,\nonumber\\
 l_n\cdot f=0, &\, l_i\cdot f=l_{i+1} \, &\textnormal{ for } i\,<\,n.
 \end{array}
 \end{equation}

 By $\mro_a$ we denote the operator of right multiplication by $a$, 
 we also denote it by the capital  letter $A$. 
 One  can easily check that the operators 
 $\frac{2}{1+t}X\circ Y, \frac{2}{1+t}X^2, \,\frac{2}{1+t}Y^2$ 
 span the simple lie algebra $\sld$.
 In terms of operators above, it is easy to see that a superbimodule 
 $\biml$ with basis 
 $l_0,l_1,\ldots,l_n$ 
 is an irreducible $sl_2$-module with the standard basis 
 $l_0,l_1,\ldots,l_n$ if
 \begin{equation}\label{basestandard}
 \begin{aligned}
 l_i&\,X\circ Y=\frac{1+t}{2}(n-2i)l_{i}, \\
 l_0&\,X^2=0, \quad l_i\,X^2=\frac{1+t}{2}(-in+i(i-1)l_{i-1}) \textnormal{ for any } i\,>\, 0,\\
 l_n&\,Y^2=0, \quad l_i\,Y^2=\frac{1+t}{2}l_{i+1} \textnormal{ for } i\,<\, n.
 \end{aligned}
 \end{equation}
 
 In terms of right multiplication operators, equality  \eqref{idsupjordan} may be written as follows:
 \begin{equation}\label{supjorop}
 \begin{aligned}
 \mro_{a_i}\mro_{a_j}\mro_{a_k}+&(-1)^{ij+ik+jk}\mro_{a_k}\mro_{a_j}\mro_{a_i}+(-1)^{jk}\mro_{(a_ia_k)a_j}=\\
 &\mro_{a_i}\mro_{a_ja_k}+(-1)^{ij+ik+jk}\mro_{a_k}\mro_{a_ja_i}+(-1)^{ij}\mro_{a_j}\mro_{a_ia_k}.
 \end{aligned}
 \end{equation}

 Substituting 
 $a_i=a_k=x$ and $a_j=e_1$ 
 (respectively $a_i=a_k=y$ and $a_j=e_1$ ) in \eqref{supjorop}, we obtain 
 $[X^2,E]=0$ (respectively  $ [Y^2,E]=0$), where $E$ denote $\mro_{e_1}$.

\subsection{Jordan superalgebra $\algdt$} 

In this section,  we shall prove the following theorem.
 
 \begin{theorem}\label{TeoDt}
 Let 
 $\alga$ be a finite-dimensional Jordan superalgebra with a solvable radical 
 $\rad$  such that $\radd=0$ 
and 
 $\alga/\rad\cong\algdt$, $t\neq -1$.
 Then $\alga\cong\alga/\rad\oplus\rad$ Iff 
 $\rad\in\overline{\mathfrak{M}}(\alga/\rad;J^{(k)})$  where $J^{(k)}$ is a 
 regular superbimodule.
 \end{theorem}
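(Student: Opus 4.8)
The plan is to follow the scheme already used for $\kac$ and for the superform. By Theorem \ref{teoredu} it suffices to treat the case in which $\rad$ is an irreducible $\algdt$-superbimodule, and I assume this henceforth. By the reductions of Section 3 the even part already splits: $\algs_0=\corfd\we\dotplus\corfd\wf$ is a subalgebra with $\we,\wf$ orthogonal idempotents lifting $e_1,e_2$ and $\we+\wf$ the identity, so it only remains to produce odd lifts $\wx,\wy$ of $x,y$ realizing the multiplication of $\algdt$. Since $\wx^2=\wy^2=0$ holds automatically in characteristic zero by supercommutativity, the relations to be arranged are the Peirce conditions $\we\cdot\wx=\wf\cdot\wx=\tfrac12\wx$ (and likewise for $\wy$) together with the single product $\wx\cdot\wy=\we+t\wf$.

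First I would fix the Peirce position, exactly as in Lemma \ref{lemakaccero}: substituting $a_j=\we$ into the linearized superidentity \eqref{idsupjordan} (or its operator form \eqref{supjorop}) forces the radical correction in $\we\cdot\wx$ to vanish, so $\wx,\wy$ may be chosen in the $\tfrac12$-Peirce component. With the positions fixed, write $\wx\cdot\wy=\we+t\wf+c$ with $c\in\rad_0$ in the corresponding Peirce component. The only freedom left is to replace $\wx\mapsto\wx+p$, $\wy\mapsto\wy+q$ with $p,q\in\rad_1$ in that component; because $\radd=0$ the product becomes $\we+t\wf+c+x\cdot q+p\cdot y$, so a Wedderburn splitting exists precisely when the linear equation $x\cdot q+p\cdot y=-c$ is solvable for $p,q\in\rad_1$.

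To decide solvability I would use the $\sld$-action. As recorded before the statement, for $t\neq-1$ the operators $\tfrac{2}{1+t}X^2,\ \tfrac{2}{1+t}Y^2,\ \tfrac{2}{1+t}X\circ Y$ span $\sld$ and act on $\rad$; by the classification of irreducible $\algdt$-superbimodules \cite{zelmar3,Tru}, $\rad$ is built from modules carrying the standard basis of \eqref{basestandard}. Computing $x\cdot q$ and $p\cdot y$ on that basis, I would read off the image of the map $(p,q)\mapsto x\cdot q+p\cdot y$ in terms of $\sld$-weights. The decisive point is that $e_1+te_2$ is a fixed weight vector: for every irreducible $\algdt$-superbimodule other than the regular one $\reg(\algdt)$ the image contains the whole admissible space of corrections $c$, whereas for $\reg(\algdt)$ the image is only the line spanned by the image of $e_1+te_2$, and a component of $c$ transverse to that line cannot be removed. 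This gives the ``if'' direction: when $\rad\in\overline{\mathfrak{M}}(\alga/\rad;J^{(k)})$ carries no copy of the regular superbimodule, the equation is solvable and $\alga=\algs\oplus\rad$.

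For the converse I would construct, in the style of Section 5, an explicit Jordan superalgebra $\alga$ with $\alga/\rad\cong\algdt$ and $\rad\cong\reg(\algdt)$ in which $\wx\cdot\wy=\we+t\wf+c$ with $c$ outside $\corfd(e_1+te_2)$, so that no adjustment removes $c$ and the decomposition fails. The main obstacle is the weight computation of the previous paragraph: one must verify from \eqref{basestandard} that $(p,q)\mapsto x\cdot q+p\cdot y$ fills the admissible correction space for all non-regular irreducibles while missing exactly one direction for $\reg(\algdt)$, and separately that the twisted product defining the counterexample genuinely satisfies \eqref{idsupjordan}. The hypothesis $t\neq-1$ is essential throughout, since it is what legitimizes the normalization $\tfrac{2}{1+t}$ and hence the entire $\sld$-weight analysis.
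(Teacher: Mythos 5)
Your overall route is the paper's route: reduce to an irreducible radical via Theorem \ref{teoredu}, normalize the Peirce position of the odd lifts, observe that the only remaining obstruction sits in the single product $\wx\cdot\wy=\we+t\wf+c$ with $c\in\rad_0$, reduce splitting to the solvability of $x\cdot q+p\cdot y=-c$ over $p,q\in\rad_1$, and exhibit an explicit extension by $\reg(\algdt)$ where that equation fails. The counterexample you describe is exactly the superalgebra $\algb$ of Subsection 6.3, and your identification of the coboundary image as the line through $a_1+ta_2$ (the image of $e_1+te_2$ in the regular superbimodule) matches the computation there.

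The genuine gap is in the forward direction: your assertion that ``for every irreducible $\algdt$-superbimodule other than the regular one the image of $(p,q)\mapsto x\cdot q+p\cdot y$ contains the whole admissible space of corrections'' is precisely the statement that has to be proved, and the $\sld$-weight heuristic you offer does not establish it. The paper's proof of Theorem \ref{TeoDt} consists almost entirely of this verification, carried out case by case over Trushina's classification: the generic case $t\neq 0,1,-\frac{n+2}{n}$ with $\rad_0=\biml_{n+1}^1\oplus\biml_{n+1}^2$, $\rad_1=\biml_{n+2}\oplus\biml_n$ (leading to the system \eqref{eq1-3k}); the degenerate values $\frac1t=-\frac{n}{n+2}$ with the modules $\bim(n+1,n+2)$, $\bim(n+1,n)$, $\wt\bim(n_1)$ (systems \eqref{sistemadt2}, \eqref{sistemadt3}); and $t=1$ with the one-dimensional modules. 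A pure weight count is too coarse here: the source and target of your linear map decompose into several $\sld$-constituents of adjacent highest weights, the coefficients involve $t$ through the normalization $\frac{2}{1+t}$, and solvability genuinely depends on the denominators $(1+t)(n-(k-1))$, $(1+t)(n+1)$, etc.\ being nonzero at the exceptional parameter values --- which is exactly why the classification splits into these cases in the first place. Until you actually write down the map on the standard bases \eqref{basestandard} for each module type and check surjectivity onto the cocycle space (equivalently $H^2(\algdt,V)=0$ for $V\not\cong\reg\algdt$), and verify that your twisted multiplication table satisfies \eqref{idsupjordan}, the proof is a correct plan rather than a proof.
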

 
 \begin{proof}
 Using \ref{teoredu} and the Theorem 1.1 in \cite{Tru}, we need to consider three main cases.
Here, $\algs_0=\corfd\we+\corfd\wf\cong(\algdt)_0$ and 
$\alga_1/\rad_1=\corfd\bar{x}+\corfd\bar{y}\cong(\algdt)_1$, 
$\alga_0=\algs_0\oplus\rad_0$.

\medskip \textbf{Case 1}
   {Let $n$ be a positive integer and suppose 
   $t\in\mathbb{R}$, $t\neq 0,1,-\frac{n+2}{n}$.}
   
   \noindent We assume that 
   where 
   $ \rad_0=\biml_{n+1}^1\oplus\biml_{n+1}^2$, 
   $\rad_1=\biml_{n+2}\oplus\biml_n$. 
   Here,  
   $\biml_{n+1}^1,\,\biml_{n+1}^2,\, \biml_{n+2}$, 
   $\biml_n$  are the same as in Example  1 in \cite{Tru}. 
   
It is easy to see that 
    $E\mid_{\rad_1}\equiv \frac{1}{2}$, 
  therefore
    $\wt{e}_i\cdot \wx=\frac{1}{2}\wx$ 
    and $\wt{e}_i\cdot \wy=\frac{1}{2}\wy$. 
  Assume that there exist  scalars 
  $\beta^{x}_{3,i}$, $\beta^{x}_{4,i}$, $\beta^{y}_{3,i}$, $\beta^{y}_{4,i}$, 
  $\xi_{1,j}^{x,y}$ and $\xi_{2,j}^{x,y}$
  for 
  $i=-1, 0, 1, \ldots, n$ and $j=0,1,\ldots, n$, 
  such that 
  \begin{align}
  \begin{aligned}\label{dtwtuz}
  \wx=& x+\beta_{3,-1}^{x}m+\beta_{3,n}^{x}mY^{2(n+1)}+\sum_{k=1}^{n}\gamma_{3,4,k}^{x}mY^{2k}-\sum_{k=1}^{n}\beta_{4,k}^{x}mY^{2k-1}EY,\\
  \wy=& y+\beta_{3,-1}^{y}m+\beta_{3,n}^{y}mY^{2(n+1)}+\sum_{k=1}^{n}\gamma_{3,4,k}^{y}mY^{2k}-\sum_{k=1}^{n}\beta_{4,k}^{y}mY^{2k-1}EY,
  \end{aligned} \\
  \begin{aligned}\label{dtuz}
    x\cdot y=\we+t\wf+ \sum_{k=0}^{n}(\xi_{1,k}^{x,y}-\xi_{2,k}^{x,y})mY^{2k+1}E+\sum_{k=0}^{n}\xi_{2,k}^{x,y}mY^{2k+1}.
  \end{aligned}
  \end{align} 
  where 
  $\gamma_{3,4,k}^{x}=\beta_{3,k-1}^{x}+\alpha\beta_{4,k}^{x}$ 
  and 
  $\gamma_{3,4,k}^{y}=\beta_{3,k-1}^{y}+\alpha\beta_{4,k}^{y}$.
    
  Now, we have 
  $\wx\cdot\wy=\we+t\wf$, 
  if and only if
  \begin{equation}\label{cincouno}
  \begin{aligned}
  0=&
  \Big(\beta_{3,-1}^{x}m+\beta_{3,n}^{x}mY^{2n+2}
  +\sum_{k=1}^{n}\gamma_{3,4,k}^{x}mY^{2k}-\sum_{k=1}^{n}\beta_{4,k}^{x}mY^{2k-1}EY\Big)\cdot y-\\
  &
  \Big(\beta_{3,-1}^{y}m+\beta_{3,n}^{y}mY^{2n+2}
  +\sum_{k=1}^{n}\gamma_{3,4,k}^{y}mY^{2k}-\sum_{k=1}^{n}\beta_{4,k}^{y}mY^{2k-1}EY\Big)\cdot x +\\
  & 
  \sum_{k=0}^{n}(\xi_{1,k}^{x,y}-\xi_{2,k}^{x,y})mY^{2k+1}E+\sum_{k=0}^{n}\xi_{2,k}^{x,y}mY^{2k+1}=\\
  &\beta_{3,-1}^{x}mY+\beta_{3,n}^{x}mY^{2n+3}
    +\sum_{k=1}^{n}\gamma_{3,4,k}^{x}mY^{2k+1}
    +\sum_{k=1}^{n}\beta_{4,k}^{x}mY^{2k+1}E+\\
    &\beta_{3,n}^{y}\frac{(1+t)(n+1)}{2}mY^{2n+1}
    +\sum_{k=1}^{n}\frac{(1+t)(n-(k-1))}{2}\beta_{4,k}^{y}mY^{2k-1}E + \\
    &\sum_{k=1}^{n}\Big(\frac{1+t}{2}k\gamma_{3,4,k}^{y}+(-1)^{k}\frac{(1+t)n+2}{4}\beta_{4,k}^{y}\Big)mY^{2k-1} +\\ 
    &\sum_{k=1}^{n+1}(\xi_{1,k-1}^{x,y}-\xi_{2,k-1}^{x,y})mY^{2k-1}E
    +\sum_{k=1}^{n+1}\xi_{2,k-1}^{x,y}mY^{2k-1}.
  \end{aligned}
  \end{equation}
  
%
  Since $\alpha=\frac{(1+t)n+2}{2(1+t)(n+1)}$,
   we have that
    \begin{equation}\label{eqppalk}
  \begin{aligned}
 & \left[\,\beta_{3,-1}^{x}+\frac{(1+t)\beta_{3,0}^{y}}{2}-\frac{n\alpha(1+t)\beta_{4,1}^{y}}{2}+\xi_{2,0}^{x,y}\,\right]\,mY+\beta_{3,n}^{x}mY^{2n+3}+\\
 & \left[\,\beta_{3,n-1}^{x}+\alpha\beta_{4,n}^{x}+\xi_{2,n}^{x,y}+\frac{(1+t)(n+1)\beta_{3,n}^{y}}{2}\right]mY^{2n+1} +\\
 &  \left[\frac{(1+t)n\beta_{4,1}^{y}}{2}+\xi_{1,0}^{x,y}-\xi_{2,0}^{x,y}\,\right]\,mYE +
  \left[\xi_{1,n}^{x,y}-\xi_{2,n}^{x,y}+\beta_{4,n}^{x}\right]mY^{2n+1}E +\\
 & \sum_{k=2}^{n}\left[\,\beta_{3,k-2}^{x}+\alpha\beta_{4,k-1}^{x}+\frac{(1+t)}{2}[k\beta_{3,k-1}^{y}+
 \alpha(k+(-1)^k(n+1)\beta_{4,k}^{y}]+\xi_{2,k-1}^{x,y}\,\right]\,mY^{2k-1}+\\
  &\sum_{k=2}^{n}\left[\,\beta_{4,k-1}^{x}+\beta_{4,k}^{y}\frac{(1+t)(n-(k-1))}{2}+\xi_{1,k-1}^{x,y}-\xi_{2,k-1}^{x,y}\,\right]\,mY^{2k-1}E=0.
  \end{aligned}
  \end{equation}
  
  \par\medskip
 Note that fixing the 
  $\xi$'s 
  in \eqref{dtuz}, we get
  \begin{equation}\label{eq1-3k}
  \begin{aligned}
  \beta_{3,n}^{x}&=0,\quad \beta_{4,n}^{x}=\xi_{2,n}^{x,y}-\xi_{1,n}^{x,y},\quad \beta_{4,1}^{y}=\frac{2(\xi_{2,0}^{x,y}-\xi_{1,0}^{x,y})}{(t+1)n},\\
  \beta_{3,0}^{x}&=\frac{n\alpha(1+t)\beta_{4,1}^{y}-2\beta_{3,-1}^{x}-2\xi_{2,0}^{x,y}}{(1+t)},\\
  \beta_{3,n}^{y}&=\frac{-2(\beta_{3,n-1}^{x}+\alpha\beta_{4,n}^{x}+\xi_{2,n}^{x,y})}{(1+t)(n+1)},\\
  \beta_{4,k}^{y}&=\frac{2}{(1+t)(n-(k-1))}\left[-\beta_{4,k-1}^{x}-\xi_{1,k-1}^{x,y}+\xi_{2,k-1}^{x,y}\right], \quad k=2,\ldots,n\\
  \beta_{3,k-1}^{y}&=\frac{1}{k}\left[\frac{-2(\beta_{3,k-2}^{x}+\alpha\beta_{4,k-1}^{x}+\xi_{2,k-1}^{x,y})}{(1+t)}-\alpha(k+(-1)^k(n+1)\beta_{4,k}\,\right]
  \end{aligned}
  \end{equation} 
  such that the equality \eqref{eqppalk} holds.
   
  \medskip {\bf{Case 2.}}
 Let $n$ be a positive integer, 
 $\frac{1}{t}=-\frac{n}{n+2}$. 
 Consider the following cases:
 $\rad\cong\bim(n+1,n+2)$, 
 $\rad\cong \bim(n+1,n)$,  
 $\rad\cong\widetilde{\bim}(n_1)$ and $n_1\neq n$. 
 (See example 2 in \cite{Tru}.) 
 
 \medskip {\bf{(A)}} 
Assume that 
 $\rad\cong\bim(n+1,n+2)$ 
 where 
 $\rad_0$
  is the irreducible $sl_2$-module with the standard basis 
  $l_0,\ldots,l_n$ and 
  $\rad_1$ is spanned by 
  $l_0x,l_0y,l_1y,\ldots,l_ny$. 
 
 \noindent
 Since $E\mid_{\bim_1}\equiv\frac{1}{2}$, then  $\wt{e}_i\cdot \wx=\frac{1}{2}\wx$ and $\wt{e}_i\cdot\wy=\frac{1}{2}\wy$.
 Assume that there exist 
 $\xi_0^{u,z},\ldots, \xi_n^{u,z}$,
  $\beta_{0,x}^{u}$, $\beta_{0}^{u}$,$ \ldots$, $\beta_{n}^{u}$, 
  $\beta_{0,x}^{z}$, $\beta_{0}^{z}$, $\ldots$, and $\beta_{n}^{z}$ 
scalars such that 
\begin{equation*}
\begin{aligned}
x\cdot y&=\we+t\wf+\sum_{k=0}^{n}\xi_k^{x,y}l_k, \\
\wx&=x+\beta_{0,x}^xl_0x+\sum_{k=0}^{n}\beta_{k}^{x}l_ky, \quad  
\wy=y+\beta_{0,x}^yl_0x+\sum_{k=0}^{n}\beta_{k}^{y}l_ky
\end{aligned}
\end{equation*}
 
 We observe that 
 $\wx\cdot\wy=\we+t\wf$ 
 if and only if
 \begin{align*}
 0&=\sum_{k=0}^{n}\xi_k^{x,y}l_k+    \beta_{0,x}^{x}\,\left(\frac{1-t}{2}\right)\,l_0+\beta_n^{y}\,\left(\frac{(n+1)t}{n+2}\right)\,l_n\nonumber\\
 &+\sum_{k=0}^{n-1}\beta_k^{x}\,\left(\frac{1+t}{2}\right)\,l_{k+1}-\sum_{k=0}^{n-1}\beta_k^{y}\,\left(\frac{1+t}{2(n-k)}\right)(-(k+1)n+(k+1)k)l_k,
 \end{align*}
 which gives rise to the system of equations
 \begin{equation}\label{sistemadt2}
 \begin{aligned}
 0&=\xi_0^{x,y}+\beta_{0,x}^{x}\,\left(\frac{1-t}{2}\right)+\beta_0^{y}\,\left(\frac{1+t}{2}\right),\\ 0&=\xi_n^{x,y}+\beta_n^{y}\,\left(\frac{(n+1)t}{n+2}\right)+\beta_{n-1}^{x}\,\left(\frac{1+t}{2}\right), \\
 0&=\xi_k^{x,y}+\beta_{k-1}^{x}\,\left(\frac{1+t}{2}\right)-\beta_k^{y}\,\left(\frac{1+t}{2(n-k)}\right)(-(k+1)n+(k+1)k)l_k,
 \end{aligned}
 \end{equation} 
 for  $ k=1,2,\ldots,n-1$. 
 We note that the system \eqref{sistemadt2} has always a solution.
 
 \medskip {\bf{(B)}} 
 $\rad\cong\bim(n+1,n)$ where  
 $\rad_0$ is the irreducible $sl_2$-module with the standard basis 
 $l_0,\ldots,l_n$ and  
 $\rad_1$ is spanned by $l_1x,\ldots,l_nx$.

 As in the case \textbf{(A)}, 
 $\wt{e}_{i}\cdot \wx=\frac{1}{2}\wx$, 
 $\wt{e}_{i}\cdot\wy=\frac{1}{2}\wy$ 
 and there exist 
 $\xi_0^{x,y}, \ldots, \xi_n^{x,y}\in\corf$ 
 such that 
 $\displaystyle{x\cdot y=\we+t\wf+\sum_{k=0}^{n}\xi_k^{x,y}l_k}.$
Let us find  
 $\beta_{k}^{x}$ and $\beta_{k}^{y}$ 
 such that 
 $ \wx=x+\sum_{k=1}^{n}\beta_{k}^{x}l_kx$, and 
  $\wy=y+\sum_{k=1}^{n}\beta_{k}^{y}l_kx$. 
 
 Now, we note that 
 $\wx\cdot\wy=\we+t\wf$
  if and only if
 \begin{align*}
 0=\sum_{k=0}^{n}\xi_k^{x,y}l_k-\sum_{k=1}^{n}\beta_k^{x}\, \left(\frac{1+t}{2}\right)k\,l_{k}-\sum_{k=1}^{n}\beta_k^{y}\,\left(\frac{1+t}{2}\right)(-kn+k(k-1))l_{k-1},
 \end{align*}
 which gives rise to the system of equations
 
 \begin{equation}\label{sistemadt3}
 \begin{aligned}
 0=&\quad\xi_0^{x,y}-\beta_{0}^{y}\,\left(\frac{1-t}{2}\right)n =\xi_n^{x,y}-n\beta_n^{x}\,\left(\frac{1+t}{2}\right),\\
 0=&\quad\xi_k^{x,y}-\beta_{k}^{x}k\,\left(\frac{1+t}{2}\right)-\beta_{k+1}^{y}\,\left(\frac{1+t}{2}\right)(-(k+1)n+(k+1)k),
 \end{aligned}
 \end{equation} 
 for $ k=1,2,\ldots,n-1$.
 
 \noindent System of equations \eqref{sistemadt3}  has always a solution. 
 
 \medskip {\bf{(C)}} 
Finally, if 
 $\rad\cong\wt{\bim}(n_1)$, $n_1\neq n$. 
 This case is similar to Case \textbf{(1)}, with the replacement of $n$ by $n_1$.

 \par \medskip 
 {\bf{Case 3.}}
 Let $t=1$.
 In this case, 
 $\rad$ 
 is isomorphic to
 $\wt{\bim}(n)$ 
 or to a 1-dimensional vector space with a generator $m$ such that $mx=mx=0$, $me=\frac{1}{2}m$. (See example 3 in \cite{Tru}.) 
 
 \medspace It only remains to consider the case when $\rad$ is isomorphic to a $1$-dimensional vector space.
 The case when 
 $\rad\cong\wt{\bim}(n)$ 
 is similar to Case \textbf{(1)}. In particular we take 
 $t=1$ 
 in the equation \eqref{eqppalk}.
 Now we shall consider two subcases:
 
 \medskip {\bf{(A)}} 
 If $m$ is an even vector, then  
 $\rad_0=\corfd m$.
Assume that 
 $x\cdot y=\we+t\wf+\eta m$, for some $\eta\in\corf$
 Since $\rad_1=0$ 
 we have 
 $\wx=x ,\, \wy=y $. 
Note that the equality 
 $\wx\cdot\wy=\we+t\wf$ 
holds if and only if $\eta=0$. 
If we take $a_i=x$, $a_j=y$, $a_k=e_1$ and $a_l=e_2$ in the equalitie \eqref{idsupjordan} we obtained, 
$0=((\widetilde{x}\cdot \widetilde{y})e_1)e_2=((e_1+te_2+\eta m)e_1)e_2=\frac{1}{4}\eta m$ 
and therefore, $\eta=0$, thus the WPT is valid.

 \medskip {\bf{(B)}} 
 If $m$ is odd vector. 
 In this case 
 $\rad_0=0$  and $\rad_1=\corfd m$.
 Therefore,
 $x\cdot y=\we+t\wf$. 
 Let $\wx=x+\beta^um$ and  $\wy=y+\beta^zm $, 
 hence $\wx\cdot\wy=\we+t\wf$ is always solvable.
 
 \medskip 
 From Case \textbf{(1)} - \textbf{(3)}, we conclude  that it is possible to give some conditions for
 $\eta_u$ and $\eta_z\in\rad_1$, 
 such that an analogue to WPT is valid under the Theorem conditions. 
 \end{proof}

\subsection{Jordan superalgebra $\kap$.} We shall proof the following theorem
\begin{theorem}\label{casokap}
 Let 
 $\alga$ 
 be a finite-dimensional Jordan superalgebra with solvable radical 
 $\rad,$ $ \radd=0$ 
 and such that 
 $\alga/\rad\cong \kap$. 
 Then there exists a subsuperalgebra 
 $\algs\subseteq\alga$ 
 such that  
 $\alga/\rad\cong\algs$ and 
 $\alga=\algs\oplus\rad$.
 \end{theorem}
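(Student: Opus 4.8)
The plan is to argue as in the proofs of Theorems~\ref{casokac} and \ref{TeoDt}: reduce to an irreducible radical, run through the classification of irreducible $\kap$-superbimodules, and in each case produce the splitting by solving a linear system for the radical corrections of the generators. First I would apply Theorem~\ref{teoredu}: since $\kap$ is simple, hence semisimple, and the class of finite-dimensional $\kap$-superbimodules is closed under subsuperbimodules and homomorphic images, it suffices to construct $\algs$ when $\rad$ is an irreducible $\kap$-superbimodule. I would then invoke the classification of irreducible $\kap$-superbimodules of C.~Martinez and E.~Zelmanov \cite{zelmar3} and M.~Trushina \cite{Tru}: as in the $\algdt$ case, each such bimodule is governed by an irreducible $\sld$-module with a standard basis $l_0,\dots,l_n$ in the sense of \eqref{basestandard}, on which $X=\mro_x$, $Y=\mro_y$ furnish the raising/lowering data and $E=\mro_e$ acts on the odd part $\rad_1$ by the scalar $\tfrac12$.

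Next I would set up the lifts. Because $\kap_0=\corfd e$ is one-dimensional and $e$ is nilpotent ($e^2=0$), the even part is simply $\algs_0=\corfd\wt e$, and the task is to find $\wt e=e'+p$, $\wx=x'+q$, $\wy=y'+s$ --- with fixed representatives $e',x',y'\in\alga$ and $p\in\rad_0$, $q,s\in\rad_1$ --- realizing the four defining relations of $\kap$, namely $(\wt e)^{2}=0$, $\wt e\cdot\wx=\tfrac12\wx$, $\wt e\cdot\wy=\tfrac12\wy$ and $\wx\cdot\wy=\wt e$. The relations $\wx^{2}=\wy^{2}=0$ hold automatically for odd elements in characteristic zero, and $\wy\cdot\wx=-\wt e$ follows by supercommutativity. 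Since $\radd=0$, each relation is affine-linear in $p,q,s$, its constant term being the radical part $c_0,c_1,c_2,c_3$ of $(e')^{2},\,e'x',\,e'y',\,x'y'$ respectively. Using $E|_{\rad_1}=\tfrac12$, the two mixed relations reduce to $X_0\,p=-c_1$ and $Y_0\,p=-c_2$; the square relation reads $2E_0\,p=-c_0$ on $\rad_0$; and the product relation becomes $p-Y_1\,q-X_1\,s=c_3$, where $X_0,Y_0\colon\rad_0\to\rad_1$ and $X_1,Y_1\colon\rad_1\to\rad_0$ are the homogeneous components of $\mro_x,\mro_y$.

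This is the same pattern as \eqref{eqppalk}--\eqref{eq1-3k} and \eqref{sistemadt2}--\eqref{sistemadt3}, and I would solve it explicitly for each module in Trushina's list: first determine $p$ from the three constraints on $\rad_0$, then recover $q,s$ from the product relation. Once a solution $(p,q,s)$ is in hand, $\algs=\corfd\wt e+\corfd\wx+\corfd\wy$ is a subsuperalgebra isomorphic to $\kap$, and a dimension count gives $\alga=\algs\oplus\rad$.

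The hard part will be the unconditional solvability. For the superform and $\algdt$ quotients the splitting may fail --- as in the counterexamples following Theorem~\ref{casosupfor}, a surviving coefficient of type $\alpha^{p,q}_{(1,0)}$ obstructs it --- whereas Theorem~\ref{casokap} asserts that for $\kap$ no such obstruction survives in any case. The delicate point is thus to show that $(c_0,c_1,c_2,c_3)$ always lies in the image of the coefficient map of the system above, for every irreducible $\rad$. The difficulty concentrates on the degenerate modules where $X_0,Y_0,E_0$ or the map $(q,s)\mapsto Y_1\,q+X_1\,s$ acquire nontrivial kernels, so that $p$ (or $q,s$) is not freely determined; there one must feed the compatibility identities forced by \eqref{supjorop} --- i.e.\ the fact that $(c_0,c_1,c_2,c_3)$ is a $2$-cocycle --- back into the system to exhibit a solution. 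Equivalently, the crux is the vanishing $H^{2}(\kap,\rad)=0$ for every irreducible $\kap$-superbimodule $\rad$, and establishing it uniformly across the classification is the technical heart of the argument.
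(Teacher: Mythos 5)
Your overall strategy --- reduce to an irreducible radical via Theorem~\ref{teoredu}, run through the classification of irreducible $\kap$-superbimodules, lift the generators with radical corrections and solve an affine-linear system --- is exactly the strategy of the paper's proof, which splits the classification into the two cases $\rad\cong\reg\kap$ and $\rad\cong\wt{\bim}(n)$, settles the first by the explicit computation $\wx=x+\alpha u+\beta z$, $\wy=y+\gamma u+\delta z$, with $\wx\cdot\wy=\wt{e}$ reducing to the single always-solvable condition on $\alpha+\delta$, and settles the second by specializing the $\algdt$ computation of Case 1 of Theorem~\ref{TeoDt} (equality \eqref{cincouno} and the system \eqref{eq1-3k}) to $t=0$. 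However, your write-up stops short of the actual content of the theorem. You set up the linear system and then declare that showing $(c_0,c_1,c_2,c_3)$ lies in the image of the coefficient map for every irreducible $\rad$ --- equivalently $H^2(\kap,\rad)=0$ --- is ``the technical heart of the argument,'' but you never carry it out for a single bimodule in the list. This is precisely the step that can fail: for the superform and $\algdt$ quotients the analogous system is obstructed on specific irreducible bimodules, which is why those theorems carry restrictions, so unconditional solvability for $\kap$ cannot be presumed and must be verified case by case. As it stands your proposal is a correct plan, not a proof.

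A secondary but genuine error: you assert that $e$ is nilpotent, $e^2=0$, and impose $\wt{e}^{\,2}=0$ as one of the relations to be realized. In the Kaplansky superalgebra $e$ is an idempotent, $e^2=e$; the paper's list of nonzero products omits this, but with $e^2=0$ the superidentity \eqref{idsupjordan} applied to $a_i=x$, $a_j=a_k=a_l=e$ gives $\frac14x=0$, so $\kap$ would not even be a Jordan superalgebra (nor simple in any meaningful sense). Consequently the equation you extract from the square relation, $2E_0\,p=-c_0$, is not the right one. The correct setup, assumed throughout the paper (see the conventions at the end of Section 3 and the line $\algs_0\cong(\kap)_0$ in the proof), is that the even part splits by the classical Jordan-algebra WPT applied to $\alga_0$, so an idempotent $\wt{e}$ with $\wt{e}^{\,2}=\wt{e}$ is already in place and only the odd generators $\wx,\wy$ require radical corrections.
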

 \begin{proof} Since Theorem \ref{teoredu} and \cite{Tru}, we have to consider two cases to know, 
 $\rad\cong\reg \mathcal{K}_3$  and $\rad\cong\wt{\bim}(n)$.  
 But the second cases is analogous to  case \textbf{(1)}, for  
 $\algdt$, 
  one can obtain an analogue of equality \eqref{cincouno} substituting 
  $t=0$. This gives rise to the system of equations equivalent to \ref{eq1-3k}. 
  
  Therefore, we consider $\rad\cong\reg \mathcal{K}_3$.  
  Assume that 
  $(\kap)_0\cong\algs_0=\corfd\we$ and $(\kap)_1\cong\alga_1/\rad_1=\corfd\bar{x}+\corfd\bar{y}$, and 
  $\rad=\corfd f\dotplus(\corfd u+\corfd z)$, 
  where
  $f\leftrightarrow e$, $u\leftrightarrow x$,  $z\leftrightarrow y$. 
  Let $\wx$ and $\wy$ be some preimages of
 $\bar{x}$ and $\bar{y}$ respectively and suppose that $xy=\we+\eta f$ for some $\eta\in\corf$.
 Let $\alpha$, $\beta$, $\gamma$ and $\delta$ scalars such that $\wx=x+\alpha u+\beta z$ and
 $wy=y+\gamma u+\delta z$. We note that 
 $\wx\cdot \wy=\we$ if and only if $\alpha+\delta=\eta$.
  and  the equality is always solvable.
 \end{proof}
 
 \begin{remark}
 In the case of the Jordan superalgebra 
 $\kapun$ 
 we have that the irreducible superbimodules are the same as the ones for the Jordan superalgebra 
 $\kap$. 
 In general, for any algebra 
 $\alga$ 
 there exist an isomorphism of  category of superbimodules over 
 $\alga$, ($\textnormal{Bimod }\alga$) 
 into category of unital superbimodules over 
 $\alga^{\#}$, ($\textnormal{Bimod}\,\alga^{\#}$). 
 Thus, the proof of the above theorem  is also true if we substitute 
 $\kap$ by $\kapun$.
 \end{remark}

\subsection{Counter-examples to WPT for Jordan superalgebras of type $\algdt$, $t\neq-1$.}
Now we will show that  restrictions imposed in Theorem \ref{TeoDt}  are essential.

Let 
$\algb=\alga\oplus\rad$ 
be a superalgebra, where 
$\alga_0=\corfd e_1+\corfd e_2+\corfd a_1+\corfd a_2$, 
$\alga_1=\corfd x+\corfd y+\corfd v+\corfd w$,  
$\rad_0=\corfd a_1+\corfd a_2$ and 
$\rad_1=\corfd v+\corfd w$. 
All nonzero products of the basis elements of $\algb$ are defined as follows:
    \begin{equation}\label{dt1par}
    \begin{aligned}
    e_i^2=e_i, \quad e_ia_j=\delta_{ij}a_j, \quad e_ix=\frac{1}{2}x,\quad e_iy=\frac{1}{2}y,\\
    a_ix=\frac{1}{2}v,\quad a_iy=\frac{1}{2}w,\quad e_iv=\frac{1}{2}v,\quad e_iw=\frac{1}{2}w,
    \end{aligned}
    \end{equation} 
    \begin{equation}\label{dt1impar}
    \begin{aligned}
    xw=vy=a_1+t a_2,\quad  xy=e_1+t e_2+a_1+(-2-t)a_2
    \end{aligned}
    \end{equation} for $i=0,1$.
    The products in \eqref{dt1par} and \eqref{dt1impar} commute and anticommute respectively and $t\neq -1$.

   \medskip
   One easily verifies that $\algb$ is a Jordan superalgebra, and 
   $\algb/\rad$ 
   is a Jordan superalgebra isomorphic to 
   $\algdt$, $t\neq -1$,
 $\rad\cong\reg\algdt$ and $\radd=0$.
 
 Consider the product $xy=e_1+t e_2+\alpha a_1+\beta a_2$. 
 Replacing $a_i=a_k=x$, $a_j=y$ and $a_l=e_1$ in \eqref{idsupjordan} 
 we obtain 
 $((xy)\cdot x)\cdot e_1-\frac{1}{2}(xy)\cdot x=0$, 
 thus we have
 $1+t+\alpha+\beta=0$, later on $\alpha+\beta=-1-t$ 
 and therefore, 
 $\algb$ is a Jordan superalgebra.

\noindent 
If we assume that the WPT is valid for 
$\algb$, 
then there are
$\wt{x},\wt{y}$ 
such that 
$\wt{x}\equiv x$, 
$\wt{y}\equiv y \,(\textrm{mod }\rad_1)$  and 
$\wt{x}\wt{y}=e_1+t e_2$, $e_i\wt{x}=\frac{1}{2}\wt{x}$, $e_i\wt{y}=\frac{1}{2}\wt{y}$.

We note that $\wt{x}=x+\sigma v$ and $\wt{y}=y+\omega w$.
If $\wt{x}=x+\sigma v+\lambda w$, using $\wt{x}^2=0$, we obtain $\lambda xw=0$ and therefore
$\lambda=0$. Now
$$\wt{x}\wt{y}=xy+\sigma yv+\omega xw=e_1+t e_2+a_1+(-2-t)a_2-\sigma(a_1+ta_2)+\omega(a_1+ta_2)$$
Therefore, 
$1-\sigma+\omega=0$ and $(-2-t)-\sigma t+\omega t=0$, later on $\omega-\sigma=-1$
 and $0=(-2-t)+t(\omega-\sigma)=-2-2t$, thus $t=-1$
and this is a contradiction.

\section{Main theorem}
Using the Theorems \ref{casokac}, \ref{casosupfor}, \ref{TeoDt} and  \ref{casokap}, 
we have the following theorem:
\begin{theorem}
Let $\alga$ be a finite dimensional Jordan superalgebra with solvable radical $\rad$ such that 
$\radd=0$ and $\alga/\rad\cong\jor$ where 
$\jor$ is a simple Jordan superalgebra.
We set
$\mathfrak{M}(\jor;\rad_1,\ldots,\rad_t)=\{\,V/ V$
is a
$\jor$-superbimodule such that homomorphic images of
$V$
do not contain subsuperbimodules isomorphic to
$\rad_i$ for $i=1,2,\ldots,t\}$,  and 
$\overline{\mathfrak{M}}(\jor;\rad_1,\ldots,\rad_t)$ 
as an analogue of the class 
$\mathfrak{M}(\jor;\rad_1,\ldots,\rad_t)$ for irreducible $\jor$-superbimodules $\rad_i$.

If one of the following conditions holds:

\medskip\noindent{\bf{i)}}
{$\jor\cong\kac$;}

\medskip\noindent{\bf{ii)}} 
{$\jor\cong\kap$;}

\medskip\noindent{\bf{iii)}}
{\it{
$\jor$
is a superalgebra of a superform with even part of dimension $n$ such that \\
$\rad\in\mathfrak{M}(\jor;\algc_n/\algc_{n-2}\, (n  \textnormal{ is odd}),\,
u\cdot\algc_n/u\cdot \algc_{n-2}\,(n \textnormal{ is even}))$;}}

\medskip\noindent{\bf{iv)}}
{\it{
$\jor\cong\algdt$, $t \neq -1$,
$\rad\in\mathfrak{M}(\algdt;\reg\,\algdt)$;} }

\noindent{\it{then there is a subsuperalgebra
$\algs\subseteq\alga$
such that
$\algs\cong\jor$
and
$\alga=\algs\oplus\rad$, 
the restrictions of items iii) and iv) are essential.}}
\end{theorem}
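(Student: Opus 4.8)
The plan is to strip off the reductions of Section~3 until the hypotheses match one of the four case theorems, and then read off the conclusion from each of them. First I would reduce to the unital case. If $\alga$ has no unit, pass to $\alga^{\#}=\alga\oplus\corfd 1$; then $\rad(\alga)=\rad(\alga^{\#})=\rad$ and $\alga^{\#}/\rad\cong(\alga/\rad)^{\#}\cong\jor^{\#}$, and by Proposition~\ref{pro1} a Wedderburn splitting of $\alga^{\#}$ restricts to one of $\alga$. Since the $\jor^{\#}$-superbimodule structure on $\rad$ is determined by the $\jor$-structure, the membership hypotheses imposed on $\rad$ in items (i)--(iv) are unaffected by this passage. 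In the non-unital type $\jor\cong\kap$ this is exactly the step recorded in the Remark following Theorem~\ref{casokap}, where the category isomorphism $\textnormal{Bimod }\alga\cong\textnormal{Bimod }\alga^{\#}$ identifies the irreducible $\kap$-superbimodules with the irreducible unital $\kapun$-superbimodules. Thus we may assume $\alga$ is unital with $\jor=\alga/\rad$ a unital simple Jordan superalgebra of one of the listed types and $\radd=0$.

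Next I would reduce to the case in which $\rad$ is an \emph{irreducible} $\jor$-superbimodule. In each of (i)--(iv) the condition on $\rad$ is membership in a class of the form $\mathfrak{M}(\jor;\rad_1,\ldots,\rad_t)$, obtained by forbidding finitely many fixed irreducible superbimodules as subsuperbimodules of homomorphic images. As observed right after Theorem~\ref{teoredu}, such a class is closed under subsuperbimodules and homomorphic images, so it is an admissible $\mj$ and $\alga\in\clasekmj$. Theorem~\ref{teoredu} then reduces the WPT for every $\alga\in\clasekmj$ to the WPT for those $\algb\in\clasekmj$ whose radical $\rad(\algb)$ is irreducible. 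Hence it suffices to establish, for each simple type, that a splitting exists whenever $\rad$ is irreducible and is not one of the forbidden modules; this is precisely what $\overline{\mathfrak{M}}$ records.

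Now I would dispatch the four cases using the case theorems already proved. For $\jor\cong\kac$, Shtern's classification gives that the only irreducible superbimodule is $\reg(\kac)$, and Theorem~\ref{casokac} produces $\algs\cong\kac$ with $\alga=\algs\oplus\rad$ with no restriction. For $\jor\cong\kap$, Theorem~\ref{casokap} disposes of the two irreducible types $\reg\kap$ and $\wt{\bim}(n)$, again unconditionally. For $\jor$ a superform algebra with $\dim\mathcal{V}_0=n$, Theorem~\ref{casosupfor} yields the splitting exactly when $\rad$ avoids $\algc_n/\algc_{n-2}$ (for $n$ odd) and $u\,\algc_n/u\,\algc_{n-2}$ (for $n$ even) among its irreducible constituents, which is the condition of item~(iii). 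For $\jor\cong\algdt$ with $t\neq-1$, Theorem~\ref{TeoDt} gives the splitting precisely when $\rad$ avoids $\reg\algdt$, which is item~(iv). Taking $\rad$ irreducible and combining these with the two reductions above proves the existence of $\algs\subseteq\alga$ with $\algs\cong\jor$ and $\alga=\algs\oplus\rad$ under each of (i)--(iv).

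Finally, essentiality of the restrictions in (iii) and (iv) follows from the explicit counter-examples. In the superform case, the two superalgebras built with $\rad\cong\algc_n/\algc_{n-2}$ ($n$ odd) and $\rad\cong u\,\algc_n/u\,\algc_{n-2}$ ($n$ even) admit no splitting, because the requirement $\wt{w}_1\cdot\wt{w}_2=1$ forces a relation of the form $v_1\cdots v_n+\sum_K\omega_K W_K=0$ among linearly independent basis monomials of $\algc$. For $\algdt$, the superalgebra $\algb$ with $\rad\cong\reg\algdt$ forces $t=-1$, contrary to hypothesis. Hence the forbidden modules cannot be dropped. I expect the only delicate points in this assembly to be the two verifications flagged above, namely that the avoidance classes $\mathfrak{M}(\jor;\ldots)$ genuinely satisfy the closure conditions required by Theorem~\ref{teoredu}, and that the hypothesis ``$\rad$ avoids a forbidden irreducible constituent'' is inherited by the minimal subsuperbimodule used in the reduction to irreducible radical; the substantive computations have already been absorbed into Theorems~\ref{casokac}, \ref{casosupfor}, \ref{TeoDt}, \ref{casokap} and their counter-examples.
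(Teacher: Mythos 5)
Your assembly is correct and follows the paper's own route: the paper derives the Main Theorem by exactly this combination of the reductions in Section~3 (Proposition~\ref{pro1} and Theorem~\ref{teoredu}, together with the observation that the avoidance classes $\mathfrak{M}(\jor;V_1,\ldots,V_k)$ are closed under subsuperbimodules and homomorphic images) with the four case theorems \ref{casokac}, \ref{casosupfor}, \ref{TeoDt}, \ref{casokap} and the explicit counter-examples for essentiality. No further comment is needed.
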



\bibliographystyle{amsplain}

\end{document}